\theoremstyle{plain}
\newtheorem{theo}{Theorem}[section]
\newtheorem{lemma}[theo]{Lemma}
\newtheorem{definition}[theo]{\bf Definition}
\newtheorem{proposition}[theo]{\bf Proposition}
\newtheorem{cor}[theo]{\bf Corollary}
\newtheorem{remark}[theo]{Remark}
\newtheorem{parrafo}[theo]{}
\newtheorem*{parraf}{}%
\newtheorem{example}[theo]{Example}
\newtheorem{paragr}{}[subsubsection]
\newcommand{\grad}{\textnormal{grad}}
\newcommand{\Gr}{\operatorname{Gr}}
\newcommand{\rank}{\textnormal{rank}}
\newcommand{\codim}{\textnormal{codim}}
\newcommand{\Jac}{\textnormal{Jac}}
\newcommand{\Aut}{\textnormal{Aut}}
\newcommand{\im}{\textnormal{Im}}
\begin{document}
\title[Moduli spaces of coherent systems]
{Hodge polynomials of some moduli spaces of coherent systems}
\thanks{This work has been partially supported by two EC Training Fellowships. The first one was within
the ``Liverpool Mathematics International Training Site'' (LIMITS)
supported as a Marie Curie Training Site of the European Community
Programme ``Improving Human Research Potential and the
Socio-Economic Knowledge Base'' Contract No.
HPMT-CT-2001-00277. The second one was within the Research Training
Network LIEGRITS: Flags, Quivers and Invariant Theory in Lie
Representation Theory, which is a Marie Curie Research Training
Network funded by the European community as project MRTN-CT
2003-505078} \subjclass{14H60, 14D20, 14F45} \keywords{Coherent
systems, moduli spaces, vector bundles, stratification, Hodge
polynomials} \dedicatory{Dedicated to Peter Newstead in testimony of friendship and gratitude}

\author{Cristian Gonz\'alez--Mart\'inez}
\address{{\it Formerly at:} Department of Mathematics\\Tufts University\\Bromfield-Pearson Building\\503 Boston
Avenue\\
Medford, MA 02155\\USA.}
\address{{\it Currently at:} DG-Payments and Market Infrastructure\\European Central Bank\\Neue Mainzer Stra$\beta$e\\ 60311 Frankfurt am Main\\Germany.}
\email{c.gonzalez-martinez@hotmail.com}

\maketitle

\begin{abstract}When $k<n$, we study the coherent systems that come from a
BGN extension in which the quotient bundle is strictly semistable.
In this case we describe a stratification of the moduli space of
coherent systems. We also describe the strata as complements of
determinantal varieties and we prove that these are irreducible
and smooth. These descriptions allow us to compute the Hodge
polynomials of this moduli space in some cases. In particular, we give
explicit computations for the cases in which $(n,d,k)=(3,d,1)$ and $d$ is even, obtaining from them the usual Poincar\'e polynomials.
\end{abstract}



\section{Introduction and statement of results}


A coherent system of type $(n,d,k)$ on an algebraic curve $X$ of genus $g$ which is
smooth and projective, consists of a pair $(E,V)$ where $E$ is a
vector bundle on $X$ of rank $n$ and degree $d$ and $V$ is a
subspace of dimension $k$ of sections of $E$. Coherent systems
were introduced by J. Le Potier \cite{LeP1}, and N. Raghavendra
and P. A. Vishwanath \cite{RV}. The study of
coherent systems is interesting for various reasons. Coherent systems are related to the
Brill--Noether problem for higher rank (see \cite{BG}) and to gauge theory. Regarding the latter,
for instance one has that the $\alpha$-stability condition is equivalent to
the existence of solutions to a certain set of gauge theoretic
equations, one of which is essentially the vortex equation (see
\cite{BG2}). Coherent systems are also a generalisation of linear series on
algebraic curves.

For these objects there is a notion of stability that depends on a
real parameter $\alpha$. A coherent subsystem $(E',V')$ is a
subbundle $E'$ of $E$ together with a subspace of sections
$V'\subset H^0(X,E')\cap V$. One defines the $\alpha$-slope as
$\mu_{\alpha}(E,V)={d\over n}+\alpha{k\over n}$. The coherent
system is called $\alpha$-semistable (resp. $\alpha$-stable) if
the $\alpha$-slope of every coherent subsystem is less than or equal to (resp.
smaller than) the $\alpha$-slope of the coherent system.

Using the notion of $\alpha$-(semi)stability, A. King and
P. E. Newstead (see \cite{KN}) constructed a GIT quotient for these
objects. They proved that for fixed $n$, $d$, $k$ and
$\alpha$, there exists a projective scheme
$\widetilde{G}(\alpha ;n,d,k)$ which is a coarse moduli space of
$\alpha$-semistable coherent systems of type $(n,d,k)$. Let
$G(\alpha ;n,d,k)$ be the moduli space of $\alpha$-stable
coherent systems of the given type.

In recent years these moduli spaces have been broadly studied by
S.~B.~Bradlow, O.~Garc\'{\i}a-Prada, V.~Mercat, V.~Mu\~noz and
P.~E. Newstead (see \cite{BGMN}, \cite{BGMMN} and \cite{BGMMN2})
for genus greater than or equal to two, and by H. Lange and P.~E.
Newstead for genus zero and one (see \cite{LN1}, \cite{LN2},
\cite{LN3} and \cite{LN4}).

In this paper we deal with the cases in which $g\geq 2$, $k<n$ and
$\alpha$ ``large''. Under these hypotheses the moduli space
$G_L(n,d,k)$ of $\alpha $-stable coherent systems for ``large''
$\alpha$ is birationally equivalent to a Grassmannian fibration
over $\mathcal{M}(n-k,d)$ (see Proposition \ref{prop:restate}),
where $\mathcal{M}(n,d)$ denotes the moduli space of stable
bundles of rank $n$ and degree $d$ on $X$. This is given by the
observation that a coherent system $(E,V)$ of fixed type $(n,d,k)$
corresponds to a certain extension of the form (BGN extension, see
Definition \ref{BGN})
$$ 0\to \mathcal{O}^{\oplus k}\to E\to F\to 0. $$ This is used in
\cite{BGMMN} to obtain some information on the geometry and the
cohomology of these moduli spaces; in particular, some Betti numbers, fundamental groups and flip loci are
computed.

However, there is not a good enough geometric description of these
moduli spaces. The results in \cite{BGMMN} do not cover fully the cases
in which the coherent system comes from a BGN extension in which
the quotient bundle $F$ is strictly semistable. In this article we
study these cases giving a stratification of these moduli spaces
by looking at the quotient bundle $F$. We also study their Hodge
polynomials.

The layout of the paper is as follows. Section 2 is a review of the theory
described in \cite{L} of universal families of extensions. In
Section 3 we give a summary of the results and definitions about
coherent systems that can be found in \cite{BGMN} and
\cite{BGMMN}.

In Section 4 we study the BGN extensions and we give the
conditions that a BGN extension must satisfy in order to
contradict $\alpha$-stability (Theorem \ref{bgn}). In Section 5
we estimate the codimension of the variety of semistable vector
bundles such that the coherent system that they induce is not
$\alpha$-stable (Theorem \ref{codimension-bad-theorem}). In
Section 6 we study the sets that classify the quotient bundles
that appear in the BGN extensions associated to our coherent
systems. To do that, from the results in Section 4 we must look at
the Jordan--H\"older filtrations that are admitted by a given $F$.
Then, we study the possible sets of these filtrations and we give
geometric descriptions of them in terms of sequences of projective
fibrations (see Proposition \ref{sequenceof projective} for a
general construction). We also estimate the number of parameters on
which these sets depend. This description will allow us in
Section 7 to construct a stratification of the moduli space of
coherent systems for $n<k$ in some cases (Theorem \ref{Theo}). We
also describe these strata as complements of determinantal
varieties (Theorem \ref{deter}) and we prove that they
are smooth and irreducible (Theorem \ref{irre_smooth}). We finish
this paper studying the Hodge polynomials of these moduli spaces.
We start Section 8 by giving a review of Hodge theory and the
relationship between Hodge--Deligne and Hodge--Poincar\'e
polynomials that we denote by $\mathcal{H}$ and $HP$ respectively.
For a complex algebraic variety $X$, not necessarily smooth,
compact or irreducible, we define its \emph{Hodge--Deligne
 polynomial} (or virtual Hodge
polynomial) as
$$\mathcal{H}(X)(u,v)=\sum_{p,q}(-1)^{p+q}\chi_{p,q}^c(X)u^p v^q \in
\mathbb{Z}[u,v],$$and its \emph{Hodge--Poincar\'e polynomial} as
$$HP(X)(u,v)=\sum_{p,q}(-1)^{p+q}\chi_{p,q}(X)u^p v^q =\sum_{p,q,k}(-1)^{p+q+k}h^{p,q}(H^k(X))u^p v^q .$$
Here the Euler characteristics that we consider, $\chi_{p,q}^c$
and $\chi_{p,q}$ respectively, are the sums of the dimensions of
certain filtrations associated to the cohomology groups with
compact support and to the usual cohomology groups for the
Hodge--Deligne polynomials and the Hodge--Poincar\'e polynomials
respectively. We also introduce equivariant Hodge--Poincar\'e
polynomials and we study how to compute the Hodge--Poincar\'e
polynomials of the strata in a general setup (see Theorem
\ref{HodgeStratum}). We conclude the paper by giving explicit
computations of some cases in which $n-k=2$. These are the
following. The Hodge--Deligne polynomial of the moduli space
$G_L(n,d,k)$ for $n-k=2$ and $d$ odd is (see Theorem
\ref{H1})
\begin{align*}\mathcal{H}(G_L(n,d,k))(u,v)
=&(1+u)^g(1+v)^g\cdot
\frac{(1+u^2v)^g(1+uv^2)^g-u^gv^g(1+u)^g(1+v)^g}{(1-uv)(1-u^2v^2)}\cdot
\\& \cdot \frac{(1-(uv)^{2(g-1)+d-k+1})\cdot \ldots \cdot
(1-(uv)^{2(g-1)+d})}{(1-uv)\cdot \ldots \cdot
(1-(uv)^{k})}.\end{align*}In
Theorem \ref{H2} we compute the Hodge--Deligne polynomial of $G_L(3,d,1)$ when $d$ is even and $g\geq (3-d)/2$. From the latter theorem one can obtain the usual Poincar\'e polynomial of $G_L(3,d,1)$ just by writing $u=v=t$, this is given by (see Corollary \ref{ya}):\begin{align*}P&_{G_L(3,d,1)}(t)  =\\& =\frac{(1+t)^{2 g}(-t^2 +t^{d+2g})}{{t^6} {{(-1+{t^2})}^3} (1+{t^2})}\Big({-t}^{6+2g}(1+t)^{2g} +
t^4(1+t^3)^{2g}-{t}^{4g+d}(1+t)^{2g}+t^{2+d+2g}(1+t^3)^{2g}\Big).
\end{align*}

\section{Universal families of extensions}\label{Lange}


Here we introduce some theory of universal families of extensions,
the conditions for the existence of global universal families are
given as well as the conditions for the existence of universal
families in a ``local'' sense. All these results can be found in
\cite{L}.

Let $f:X\rightarrow Y$ be a flat projective morphism of noetherian
schemes and $\mathscr{F}$ and $\mathscr{G}$ coherent
$\mathcal{O}_X$-modules, flat over $Y$. Let
$Ext^1_X(\mathscr{F},\mathscr{G})$ be the vector space
parametrizing the extensions of $\mathscr{F}$ by $\mathscr{G}$
over $X$. Let $\mathscr{E}xt_f^i(\mathscr{F},\mathscr{G}):=
R^i(f_{\ast}\mathscr{H}om_{\mathcal{O}_X}(\mathscr{F},\bullet
))(\mathscr{G})$ be the ith relative Ext-sheaf.

We restrict ourselves to the case in which $\mathscr{F}$ is
locally free. In this case\begin{equation*}\mathscr{E}xt_f^i(\mathscr{F},\mathscr{G})\simeq
R^i f_{\ast} (\mathscr{F}^{\vee}\otimes
\mathscr{G}),\end{equation*}and for every coherent sheaf $\mathscr{C}$
on X and for every point $y\in Y$, the usual base change
homomorphism
$$\tau^i(y):R^i f_{\ast} \mathscr{C}\otimes k(y)\rightarrow H^i(X_y
,\mathscr{C}_y)$$is the homomorphism
\begin{equation}\varphi^i(y):\mathscr{E}xt_f^i(\mathscr{F},\mathscr{G})\otimes
k(y)\rightarrow
Ext_{X_y}^i(\mathscr{F}_y,\mathscr{G}_y).\end{equation}

We will define now what a family of extensions is. For every point
$y\in Y$ let
\begin{equation*}\phi_y
:Ext_X^1(\mathscr{F},\mathscr{G})\rightarrow
Ext_{X_y}^1(\mathscr{F}_y ,\mathscr{G}_y),
\end{equation*}be the map that assigns to every extension class of
$\mathscr{F}$ by $\mathscr{G}$, the extension class of
$\mathscr{F}_y$ by $\mathscr{G}_y$.
\begin{definition}\textnormal{A \emph{family of extensions} of
$\mathscr{F}$ by $\mathscr{G}$ over $Y$ is a family $(e_y \in
Ext_{X_y}^1(\mathscr{F}_y ,\mathscr{G}_y))_{y\in Y}$ such that
there is an open covering $\mathscr{U} =(U_i)_{i\in I}$ of $Y$ and
for each $i\in I$ an element $\sigma_i \in
Ext^1_{f^{-1}(U_i)}(\mathscr{F}_{| f^{-1}(U_i)},\mathscr{G}_{|
f^{-1}(U_i)})$ such that $e_y = \phi_{i,y}(\sigma_i)$ for every
$y\in Y$. Here $\phi_{i,y}$ denotes the canonical map
$$Ext^1_{f^{-1}(U_i)}(\mathscr{F}_{| f^{-1}(U_i)},\mathscr{G}_{|
f^{-1}(U_i)})\rightarrow Ext_{X_y}^1(\mathscr{F}_y
,\mathscr{G}_y).$$The family of extensions is called
\emph{globally defined} if the covering $\mathscr{U}$ may be taken
to be $Y$ itself.}
\end{definition}

The relationship between the groups $\mathscr{E}xt^i_f(\mathscr{F},\mathscr{G})$ and $Ext^{j}_X(\mathscr{F},\mathscr{G})$, is accounted for by a spectral sequence whose $E_2$-term is given by
$E_2^{p,q}=H^p(Y,\mathscr{E}xt^q_f(\mathscr{F},\mathscr{G}))$ and
which abuts to $Ext^{\ast}_X(\mathscr{F},\mathscr{G})$.

Suppose in addition to the general hypotheses that
$\mathscr{E}xt^i_f (\mathscr{F},\mathscr{G})$ commutes with base
change for $i=0$, $1$. Let $q_S:X\times_Y S \rightarrow X$ and
$p_S :X\times_Y S \rightarrow S$ be the projections. Then the
functor
$$E(S):=H^0(S,\mathscr{E}xt^1_{p_S}(q_S^{\ast}\mathscr{F},q_S^{\ast}\mathscr{G}))$$of
the category of noetherian $Y$-schemes to the category of sets, is
a contravariant functor that is representable by the vector bundle
$V=\mathbb{V}(\mathscr{E}xt^1_f (\mathscr{F},\mathscr{G})^{\vee})$
over $Y$ associated to the locally free sheaf $\mathscr{E}xt^1_f
(\mathscr{F},\mathscr{G})^{\vee}$.
\begin{proposition}\label{Ext-first}Suppose $Y$ is reduced and $\mathscr{E}xt^i_f
(\mathscr{F},\mathscr{G})$ commutes with base change for $i=0$,
$1$. Then there is a family $(e_v)_{v\in V}$ of extensions of
$q_V^{\ast}\mathscr{F}$ by $q_V^{\ast}\mathscr{G}$ over $V$ which
is universal in the category of reduced noetherian $Y$-schemes.
\end{proposition}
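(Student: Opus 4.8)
The plan is to build the universal family by combining the representability of the functor $E(S) = H^0(S, \mathscr{E}xt^1_{p_S}(q_S^\ast\mathscr{F}, q_S^\ast\mathscr{G}))$ by the vector bundle $V = \mathbb{V}(\mathscr{E}xt^1_f(\mathscr{F},\mathscr{G})^\vee)$, stated just before the proposition, with the canonical bijection (derived from the spectral sequence $E_2^{p,q} = H^p(Y, \mathscr{E}xt^q_f(\mathscr{F},\mathscr{G}))$ abutting to $Ext^\ast_X(\mathscr{F},\mathscr{G})$) between families of extensions and global sections of $\mathscr{E}xt^1_f(\mathscr{F},\mathscr{G})$. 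First I would base-change the whole setup along the structure morphism $g: V \to Y$: set $X_V := X\times_Y V$, let $q_V: X_V \to X$ and $p_V: X_V \to V$ be the projections, and put $\mathscr{F}_V := q_V^\ast\mathscr{F}$, $\mathscr{G}_V := q_V^\ast\mathscr{G}$. One needs that $p_V$ is again flat and projective and that $\mathscr{F}_V$, $\mathscr{G}_V$ are flat over $V$ with $\mathscr{F}_V$ locally free; these are standard consequences of base change, and since $\mathscr{E}xt^i_f(\mathscr{F},\mathscr{G})$ commutes with base change for $i=0,1$, the sheaves $\mathscr{E}xt^i_{p_V}(\mathscr{F}_V,\mathscr{G}_V)$ are the pullbacks of $\mathscr{E}xt^i_f(\mathscr{F},\mathscr{G})$ and still commute with base change.

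The key step is to produce the universal extension class. By definition of representability, the identity morphism $\mathrm{id}: V \to V$ corresponds to a distinguished element $\eta \in E(V) = H^0(V, \mathscr{E}xt^1_{p_V}(\mathscr{F}_V, \mathscr{G}_V))$. Since $V$ is reduced (being a vector bundle over the reduced scheme $Y$) and $\mathscr{E}xt^1_{p_V}(\mathscr{F}_V,\mathscr{G}_V)$ commutes with base change, the bijection recalled in the previous subsection identifies $H^0(V, \mathscr{E}xt^1_{p_V}(\mathscr{F}_V,\mathscr{G}_V))$ with the set of all families of extensions of $\mathscr{F}_V$ by $\mathscr{G}_V$ over $V$; thus $\eta$ determines a family $(e_v)_{v\in V}$ with $e_v \in Ext^1_{X_v}(\mathscr{F}_v, \mathscr{G}_v)$. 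I would then verify that for $v \in V$ lying over $y \in Y$, the class $e_v$ is exactly the extension class of $\mathscr{F}_y$ by $\mathscr{G}_y$ determined by the point $v$ under the identification $V\times_Y\{y\} = \mathbb{V}(Ext^1_{X_y}(\mathscr{F}_y,\mathscr{G}_y)^\vee)$, using the formula $\phi_y(e) = \varphi^1(y)\, i_y\, \mu(e)$ together with the compatibility of $\varphi^1$ with base change.

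Finally, to establish the universal property: given any reduced noetherian $Y$-scheme $S$ and any family of extensions $(f_s)_{s\in S}$ of $q_S^\ast\mathscr{F}$ by $q_S^\ast\mathscr{G}$ over $S$, the same bijection identifies this family with an element of $H^0(S, \mathscr{E}xt^1_{p_S}(q_S^\ast\mathscr{F}, q_S^\ast\mathscr{G})) = E(S)$, which by representability corresponds to a unique $Y$-morphism $g_S: S \to V$; I would then check that $g_S^\ast\eta$ recovers the given family, i.e.\ $(f_s)_{s\in S} = (e_{g_S(s)})_{s\in S}$, which follows from the naturality of the bijection between sections of the relative $Ext^1$-sheaf and families of extensions under pullback. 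The main obstacle is precisely this last compatibility: one must check that the bijection "families of extensions $\leftrightarrow$ global sections of $\mathscr{E}xt^1$" is functorial in the base, so that it intertwines the pullback of families with the pullback of sections; this is what makes the tautological section $\eta$ deserve to be called universal, and it rests on the reducedness hypothesis (needed for the bijection to hold over each $S$) and on the base-change compatibility of $\mathscr{E}xt^i_f(\mathscr{F},\mathscr{G})$ for $i=0,1$, which controls both the spectral-sequence edge map $\mu$ and the comparison maps $\varphi^i(y)$.
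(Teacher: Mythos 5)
Your argument is correct, but note that the paper itself gives no proof of this proposition: it is quoted verbatim as background from Lange's article \cite{L}, so there is nothing in the paper to compare against. Your route --- extracting the tautological section $\eta\in E(V)$ corresponding to $\mathrm{id}_V$ via representability, converting it into a family of extensions using the bijection between $H^0(V,\mathscr{E}xt^1_{p_V}(\mathscr{F}_V,\mathscr{G}_V))$ and families over the reduced scheme $V$, and deducing universality from Yoneda plus the naturality of that bijection under pullback --- is exactly Lange's argument, and you correctly identify the two points where the hypotheses enter (reducedness for the bijection, base-change compatibility for $i=0,1$ to ensure the relative $\mathscr{E}xt$ sheaves behave under pullback).
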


\begin{parrafo}\label{universal}\textnormal{Here ``universal'' means: Given a reduced noetherian
$Y$-scheme $S$ and a family of extensions $(e_s)_{s\in S}$ of
$q_S^{\ast}\mathscr{F}$ by $q_S^{\ast}\mathscr{G}$ over $S$, then
there is exactly one morphism $g:S\rightarrow V$ over $Y$ such
that $(e_s)_{s\in S}$ is the pull-back of $(e_v)_{v\in V}$ by
$g$.}\end{parrafo}

There exists a projective analogue of the above result. Under the
same hypotheses as in the last proposition, consider the functor
$$PE(S):=\textnormal{set of invertible quotients of
$\mathscr{E}xt^1_{p_S}(q_S^{\ast}\mathscr{F},q_S^{\ast}\mathscr{G})^{\vee}$}$$of
the category of noetherian $Y$-schemes to the category of sets,
where $q_S$ and $p_S$ are as above. This is a contravariant
functor that is representable by the projective bundle
$P=\mathbb{P}(\mathscr{E}xt^1_f (\mathscr{F},\mathscr{G})^{\vee})$
over $Y$.
\begin{proposition}\label{ext-projective}Suppose $Y$ is reduced and $\mathscr{E}xt^i_f
(\mathscr{F},\mathscr{G})$ commutes with base change for $i=0$,
$1$. Then there is a family $(e_p)_{p\in P}$ of extensions of
$q_P^{\ast}\mathscr{F}$ by $q_P^{\ast}\mathscr{G} \otimes
p_P^{\ast}\mathcal{O}_P(1)$ over $P$ which is universal in the
category of reduced noetherian $Y$-schemes for the classes of
families of non-split extensions of $q_P^{\ast}\mathscr{F}$ by
$q_P^{\ast}\mathscr{G} \otimes p_P^{\ast}\mathscr{L}$ over $S$
with arbitrary $\mathscr{L}\in Pic(S)$ modulo the canonical
operation of $H^0(S,\mathcal{O}_S^{\vee})$.
\end{proposition}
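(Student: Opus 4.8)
The plan is to transport the argument behind Proposition~\ref{Ext-first} from the vector bundle $V=\mathbb{V}(\mathscr{E}xt^1_f(\mathscr{F},\mathscr{G})^{\vee})$ to its projectivisation; the extra twist by $\mathcal{O}_P(1)$ is precisely what allows a point of $P$ (an invertible quotient, i.e.\ a line defined only up to scalar) to play the role that a vector plays for $V$. Write $\mathcal{E}=\mathscr{E}xt^1_f(\mathscr{F},\mathscr{G})$, which is locally free on $Y$ by the base change hypothesis together with the consequences of the Cohomology and Base Change theorem recalled above, and let $\pi\colon P=\mathbb{P}(\mathcal{E}^{\vee})\to Y$ be the structure morphism. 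By Grothendieck's description of $\mathbb{P}(\mathcal{E}^{\vee})$, the bundle $P$ carries a tautological invertible quotient $\pi^{\ast}\mathcal{E}^{\vee}\twoheadrightarrow\mathcal{O}_P(1)$; dualising gives a nowhere-vanishing inclusion $\mathcal{O}_P(-1)\hookrightarrow\pi^{\ast}\mathcal{E}$, equivalently a canonical global section
$$s\in H^0\bigl(P,\pi^{\ast}\mathcal{E}\otimes\mathcal{O}_P(1)\bigr)$$
that is nonzero at every point of $P$.

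First, I would rewrite the sheaf containing $s$ as a relative Ext-sheaf. Since $\mathcal{E}$ commutes with base change for $i=0$ and $i=1$, the natural maps $\pi^{\ast}\mathscr{E}xt^i_f(\mathscr{F},\mathscr{G})\to\mathscr{E}xt^i_{p_P}(q_P^{\ast}\mathscr{F},q_P^{\ast}\mathscr{G})$ are isomorphisms for $i=0,1$, and the sheaves on the right again commute with base change over $P$ (the fibres and the maps $\varphi^i$ being unchanged). Combining this with the projection formula for the line bundle $p_P^{\ast}\mathcal{O}_P(1)$ on $X\times_Y P$ — legitimate because $q_P^{\ast}\mathscr{F}$ is locally free — yields a canonical isomorphism
$$\pi^{\ast}\mathcal{E}\otimes\mathcal{O}_P(1)\;\cong\;\mathscr{E}xt^1_{p_P}(q_P^{\ast}\mathscr{F},q_P^{\ast}\mathscr{G})\otimes\mathcal{O}_P(1)\;\cong\;\mathscr{E}xt^1_{p_P}\bigl(q_P^{\ast}\mathscr{F},\;q_P^{\ast}\mathscr{G}\otimes p_P^{\ast}\mathcal{O}_P(1)\bigr).$$
Thus $s$ may be viewed as an element of $H^0\bigl(P,\mathscr{E}xt^1_{p_P}(q_P^{\ast}\mathscr{F},q_P^{\ast}\mathscr{G}\otimes p_P^{\ast}\mathcal{O}_P(1))\bigr)$, and by the canonical bijection between the global sections of a base-change-commuting relative $\mathscr{E}xt^1$ and the families of extensions (recalled before Proposition~\ref{Ext-first}) it corresponds to a family $(e_p)_{p\in P}$ of extensions of $q_P^{\ast}\mathscr{F}$ by $q_P^{\ast}\mathscr{G}\otimes p_P^{\ast}\mathcal{O}_P(1)$ over $P$. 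Using the identity $\phi_p(e_p)=\varphi^1(p)\,i_p\,\mu(e_p)$ and the fact that $\varphi^1(p)$ is an isomorphism, the condition that $s$ be nowhere zero translates exactly into each $e_p$ being a nonsplitting extension.

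For universality, let $S$ be a reduced noetherian $Y$-scheme with structure morphism $h\colon S\to Y$, let $\mathscr{L}\in Pic(S)$, and let $(e_s)_{s\in S}$ be a family of nonsplitting extensions of $q_S^{\ast}\mathscr{F}$ by $q_S^{\ast}\mathscr{G}\otimes p_S^{\ast}\mathscr{L}$ over $S$. Running the same chain of identifications over $S$ — the base-change bijection, the projection formula, and $\mathscr{E}xt^1_{p_S}(q_S^{\ast}\mathscr{F},q_S^{\ast}\mathscr{G})\cong h^{\ast}\mathcal{E}$ — one encodes this family by a section $\sigma\in H^0(S,h^{\ast}\mathcal{E}\otimes\mathscr{L})$. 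The nonsplitting hypothesis says precisely that the image of $\sigma$ in $(h^{\ast}\mathcal{E}\otimes\mathscr{L})\otimes k(s)\cong Ext^1_{X_s}(\mathscr{F}_s,\mathscr{G}_s)$ is nonzero for every $s\in S$ (the base-change isomorphism $\varphi^1$ being used once more), so $\sigma$ is nowhere vanishing and hence spans a sub-line-bundle $\mathscr{L}^{\vee}\hookrightarrow h^{\ast}\mathcal{E}$; dualising produces an invertible quotient $h^{\ast}\mathcal{E}^{\vee}\twoheadrightarrow\mathscr{L}$. By the universal property of $P=\mathbb{P}(\mathcal{E}^{\vee})$ there is a unique $Y$-morphism $g\colon S\to P$ with $g^{\ast}\bigl(\pi^{\ast}\mathcal{E}^{\vee}\to\mathcal{O}_P(1)\bigr)$ equal to this quotient; in particular $g^{\ast}\mathcal{O}_P(1)\cong\mathscr{L}$, and chasing the isomorphisms backwards shows that the pull-back of $(e_p)_{p\in P}$ along $g$ is $(e_s)_{s\in S}$. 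Finally, two sections of $h^{\ast}\mathcal{E}\otimes\mathscr{L}$ spanning the same sub-line-bundle, or two trivialisations of a fixed such sub-line-bundle, differ by a global unit; this is the canonical operation of $H^0(S,\mathcal{O}_S^{\vee})$, and modulo it the morphism $g$ is unique, which is the claim.

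I expect the main obstacle to be the bookkeeping: one must check that all the ``canonical'' isomorphisms used — base change over $Y$ applied to $P$ and to $S$, the projection formula on $X\times_Y P$ and on $X\times_Y S$, and the base-change bijections between sections of the relative $\mathscr{E}xt^1$ and families of extensions over $P$ and over $S$ — are mutually compatible, so that pulling back $(e_p)$ along $g$ recovers the given family $(e_s)$ on the nose rather than merely an equivalent-looking one. This is exactly where the hypothesis that $\mathscr{E}xt^i_f(\mathscr{F},\mathscr{G})$ commutes with base change for \emph{both} $i=0$ and $i=1$ is used in full, as in Proposition~\ref{Ext-first}; the remaining content is the formal passage from the vector bundle $\mathbb{V}(\mathcal{E}^{\vee})$ to its projectivisation.
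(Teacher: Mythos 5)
The paper gives no proof of this proposition---it is quoted from Lange \cite{L} as part of the review in Subsection \ref{Lange}---but your argument is correct and is essentially Lange's: it combines the representability by $P$ of the functor $PE$ of invertible quotients of $\mathscr{E}xt^1_{p_S}(q_S^{\ast}\mathscr{F},q_S^{\ast}\mathscr{G})^{\vee}$ (stated just before the proposition) with the bijection between families of extensions and sections of the relative $\mathscr{E}xt^1$, the projection formula supplying the twist by $\mathscr{L}$. Your identification of nonsplitting families with nowhere-vanishing sections, hence with invertible quotients modulo the action of $H^0(S,\mathcal{O}_S^{\vee})$, is exactly the intended content, so the proof is fine.
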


As a restriction of these results we obtain the
classical ones on universal extensions, here ``universal'' is in
the usual sense. These are the following (see \cite{NR}, \cite{R}
and the Appendix on extensions of \cite{S}):

Fix an algebraic variety $X$, and let $S$ and $T$ be two more
algebraic varieties. Let $V$ (resp. $W$) be a vector bundle on
$S\times X$ (resp. $T\times X$), such that
dim($H^1(X,\mathscr{H}om (W_t , V_s))$) is independent of the
point $(s,t)$ of $S\times T$. Let $p_{S\times T}$, $p_T$ and $p_S$
be the projections $S\times T \times X \rightarrow S \times T$,
$S\times T \rightarrow T$ and $S\times T \rightarrow S$
respectively.

Let $$F=R^1(p_{S \times T})_{\ast}(\mathscr{H}om ((p_T \times
id_X)^{\ast} W, (p_S \times id_X)^{\ast} V)).$$This is a vector
bundle on $S\times T$. Let $\pi :F \rightarrow S \times T$ be the
projection.
\begin{proposition}\label{Ext-NR}If $$h^i(S\times T, (p_{S \times T})_{\ast}(\mathscr{H}om ((p_T \times
id_X)^{\ast} W, (p_S \times id_X)^{\ast} V)\otimes F^{\vee})=0$$
for $i=1$, $2$, there exists a vector bundle $E$ on $F\times X$
and an exact sequence
\begin{equation*}0\rightarrow (\pi \times
id_X)^{\ast}(p_S \times id_X)^{\ast} V \rightarrow E \rightarrow
(\pi \times id_X)^{\ast}(p_T \times id_X)^{\ast} W \rightarrow 0,
\end{equation*}such that for every point $(s,t)\in S\times T$ and
every element $h \in F_{(s,t)}=H^1(X ,\mathscr{H}om (W_t ,V_s))$,
its restriction to $\{ h \} \times
X$:\begin{equation*}0\rightarrow V_s \rightarrow E_h \rightarrow
 W_t \rightarrow 0
\end{equation*}is the extension associated to $h$.
\end{proposition}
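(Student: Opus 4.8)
The plan is to obtain the bundle $E$ by pulling back, along the projection $\pi\colon F\to S\times T$ and the tautological section of $\pi^{\ast}F$, a single ``$F^{\vee}$-twisted'' universal extension living over $(S\times T)\times X$; this is the Narasimhan--Ramanan construction (cf. \cite{NR}) organised through the spectral-sequence exact sequence displayed above. Throughout set $Y=S\times T$, $f=p_{S\times T}\colon Y\times X\to Y$, and write $\mathcal{W}=(p_T\times id_X)^{\ast}W$ and $\mathcal{V}=(p_S\times id_X)^{\ast}V$; these are locally free on $Y\times X$, so $\mathscr{H}om(\mathcal{W},\mathcal{V})=\mathcal{W}^{\vee}\otimes\mathcal{V}$ is locally free and $\mathscr{E}xt^{i}_{f}(\mathcal{W},\mathcal{V})\cong R^{i}f_{\ast}\mathscr{H}om(\mathcal{W},\mathcal{V})$ by item (ii) of Subsection \ref{Lange}. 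Since $Y$ is a variety it is reduced and $f$ is flat and projective, so the hypothesis that $h^{1}(X,\mathscr{H}om(W_t,V_s))$ is independent of $(s,t)$, together with Grauert's base-change theorem (the reduced-base case of \cite[III, \S 12]{H}), shows that $R^{1}f_{\ast}\mathscr{H}om(\mathcal{W},\mathcal{V})$ is locally free and commutes with base change. In particular $F$ is a vector bundle on $Y$ with $F_{(s,t)}\cong H^{1}(X,\mathscr{H}om(W_t,V_s))$ via the base-change map $\varphi^{1}(s,t)$; this is the only place Grauert is invoked, and it already establishes the assertion made just before the statement.

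Next I would construct the twisted class. Apply the spectral-sequence exact sequence above to $\mathscr{F}=\mathcal{W}$ and $\mathscr{G}=\mathcal{V}\otimes f^{\ast}F^{\vee}$. Since $\mathcal{W}$ and $F^{\vee}$ are locally free, the projection formula gives $f_{\ast}\mathscr{H}om(\mathcal{W},\mathcal{V}\otimes f^{\ast}F^{\vee})\cong f_{\ast}\mathscr{H}om(\mathcal{W},\mathcal{V})\otimes F^{\vee}$ and $\mathscr{E}xt^{1}_{f}(\mathcal{W},\mathcal{V}\otimes f^{\ast}F^{\vee})\cong R^{1}f_{\ast}\mathscr{H}om(\mathcal{W},\mathcal{V})\otimes F^{\vee}=F\otimes F^{\vee}=\mathscr{E}nd(F)$, so the exact sequence becomes
$$0\to H^{1}(Y,f_{\ast}\mathscr{H}om(\mathcal{W},\mathcal{V})\otimes F^{\vee})\xrightarrow{\varepsilon} Ext^{1}_{Y\times X}(\mathcal{W},\mathcal{V}\otimes f^{\ast}F^{\vee})\xrightarrow{\mu} H^{0}(Y,\mathscr{E}nd(F))\xrightarrow{d_2} H^{2}(Y,f_{\ast}\mathscr{H}om(\mathcal{W},\mathcal{V})\otimes F^{\vee}).$$
The two outer groups are exactly those assumed to vanish, so $\mu$ is an isomorphism; let $\Theta\in Ext^{1}_{Y\times X}(\mathcal{W},\mathcal{V}\otimes f^{\ast}F^{\vee})$ be the unique class with $\mu(\Theta)=\mathrm{id}_{F}$, and represent it by an extension of sheaves $0\to\mathcal{V}\otimes f^{\ast}F^{\vee}\to\mathcal{E}'\to\mathcal{W}\to 0$ on $Y\times X$, with $\mathcal{E}'$ automatically locally free because its outer terms are.

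Finally I would pull back and contract. Let $\mathbf{t}\colon\mathcal{O}_{F}\to\pi^{\ast}F$ be the tautological section of the vector bundle $F$ and $\mathbf{t}^{\vee}\colon\pi^{\ast}F^{\vee}\to\mathcal{O}_{F}$ its dual; at a point $h\in F_{(s,t)}$ the cosection $\mathbf{t}^{\vee}$ is evaluation at $h$. Because $\pi\times id_X$ is flat, pulling $\mathcal{E}'$ back along it remains exact; pushing the result out along $(\pi\times id_X)^{\ast}\mathcal{V}\otimes p_F^{\ast}\pi^{\ast}F^{\vee}\xrightarrow{1\otimes p_F^{\ast}\mathbf{t}^{\vee}}(\pi\times id_X)^{\ast}\mathcal{V}$ produces
$$0\to(\pi\times id_X)^{\ast}(p_S\times id_X)^{\ast}V\to E\to(\pi\times id_X)^{\ast}(p_T\times id_X)^{\ast}W\to 0$$
on $F\times X$, with $E$ locally free. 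For the fibrewise assertion, restrict $\Theta$ to $\{(s,t)\}\times X$: since $\varphi^{1}(s,t)$ is an isomorphism the formation of $\mu$ commutes with base change, so $\Theta|_{\{(s,t)\}\times X}\in Ext^{1}_{X}(W_t,V_s)\otimes F_{(s,t)}^{\vee}=\mathrm{Hom}(F_{(s,t)},Ext^{1}_{X}(W_t,V_s))$ is precisely the canonical isomorphism $\varphi^{1}(s,t)$. Contracting it with $h$ — which is exactly the effect of pushing out along $p_F^{\ast}\mathbf{t}^{\vee}$ and then restricting to $\{h\}\times X$ — yields $\varphi^{1}(s,t)(h)=h\in Ext^{1}_{X}(W_t,V_s)$, so $E|_{\{h\}\times X}$ is the extension $0\to V_s\to E_h\to W_t\to 0$ classified by $h$, as required.

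The main obstacle is the homological bookkeeping that pins down the obstruction group: one must see that twisting $\mathscr{G}$ by $f^{\ast}F^{\vee}$ replaces the a priori large space of candidate universal families (global sections of $\pi^{\ast}F$ over the total space of $F$) by $\mathrm{Hom}(F,F)$, so that the existence of $\Theta$ and its uniqueness are governed exactly by the vanishing of the $H^{2}$ and $H^{1}$ of $f_{\ast}\mathscr{H}om((p_T\times id_X)^{\ast}W,(p_S\times id_X)^{\ast}V)\otimes F^{\vee}$. This is what makes the two stated hypotheses the correct ones, and it is also what promotes the universal family of Proposition \ref{Ext-first} to a genuine sheaf $E$ rather than merely a compatible system of fibrewise classes. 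The one delicate point is verifying that the final contraction realises $\varphi^{1}(s,t)(h)$ on the nose rather than up to an automorphism of $F_{(s,t)}$, which requires keeping the base-change identifications consistent throughout.
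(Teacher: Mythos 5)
Your argument is correct, and it is essentially the route the paper intends: the paper states Proposition \ref{Ext-NR} without proof, citing \cite{NR}, \cite{R} and the appendix of \cite{S} and presenting it as a specialization of the machinery of Subsection \ref{Lange}, and your derivation — twist $\mathscr{G}$ by $f^{\ast}F^{\vee}$ so that $\mathscr{E}xt^1_f$ becomes $\mathscr{E}nd(F)$, use the five-term exact sequence together with the two stated vanishing hypotheses to lift $\mathrm{id}_F$ uniquely to a global extension class on $(S\times T)\times X$, then pull back to $F\times X$ and contract with the tautological section — is exactly that classical construction. The only point worth flagging is cosmetic: Grauert's theorem as stated in \cite[III, \S 12, Cor.~12.9]{H} assumes the base is integral, so for a reducible $S\times T$ one applies it componentwise (or invokes the "Cohomology and Base Change" theorem as the paper does); this does not affect the argument.
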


As in the general case, we have a projective analogue of this
proposition.

\begin{remark}\label{remark}{\em The hypotheses of Proposition \ref{Ext-NR} are
verified in the following cases:
\begin{itemize}\item[(a)] When for all $(s,t)\in S \times T$, we have
that $Hom (W_t ,V_s)=\{ 0 \}$. \item[(b)] When $S$ and $T$ are
affine.
\end{itemize}}
\end{remark}


\section{Coherent systems}\label{Coherent}


In this section we introduce some general theory on coherent
systems on algebraic curves. This material is a summary of
results that can be found in \cite{BG}, \cite{BGMN} and \cite{BGMMN}.

Let $X$ be a smooth projective algebraic curve of genus greater
than or equal to 2.
\begin{definition}\textnormal{A \emph{coherent system} on $X$ of type
$(n,d,k)$ is a pair $(E,V)$, where $E$ is a vector bundle on $X$
of rank $n$ and degree $d$ and $V$ is a subspace of dimension $k$
of the space of sections $H^0(E)$.}
\end{definition}
\begin{definition}\textnormal{Fix $\alpha \in \mathbb{R}$. Let $(E,V)$ be a
coherent system of type $(n,d,k)$. The \emph{$\alpha$-slope} of
$(E,V)$, $\mu_{\alpha}(E,V)$, is defined by $$\mu_{\alpha}
(E,V)=\frac{d}{n}+\alpha \cdot \frac{k}{n}.$$We say that $(E,V)$
is \emph{$\alpha$-stable} if
$$\mu_{\alpha}(E',V')<\mu_{\alpha}(E,V)$$for all proper subsystems
$(E',V')$ (i.e. for every non-zero subbundle $E'$ of $E$ and
every subspace $V' \subseteq V \cap H^0(E')$ with $(E',V')\neq
(E,V)$). Analogously \emph{$\alpha$-semistability} is defined by
changing $<$ to $\leqslant$.}
\end{definition}

There exists a (coarse) moduli space for $\alpha$-stable coherent
systems of type $(n,d,k)$ which we denote by $G(\alpha ;n,d,k)$.

\begin{definition}\textnormal{We say that $\alpha >0$ is a \emph{critical value} if
there exists a proper subsystem $(E',V')$ such that
$\frac{k'}{n'}\neq \frac{k}{n}$ but
$\mu_{\alpha}(E',V')=\mu_{\alpha}(E,V)$. We also regard $0$ as a
critical value.}
\end{definition}

For $\alpha$ not critical, if $\gcd(n,d,k)=1$, the
$\alpha$-semistability condition and the $\alpha$-stability
condition are equivalent. For $k<n$, it is easy to see that there
are finitely many critical values. This is also true, but not obvious, when $k\geq
n$.

If we label the critical values of $\alpha$ by $\alpha_i$,
starting with $\alpha_0 =0$, we get a partition of the
$\alpha$-range into a set of intervals $(\alpha_i
,\alpha_{i+1})$. Within the interval $(\alpha_i ,\alpha_{i+1})$
the property of $\alpha$-stability is independent of $\alpha$,
that is if $\alpha$, $\alpha' \in (\alpha_i , \alpha_{i+1})$ then
$G(\alpha ;n,d,k)=G(\alpha';n,d,k)$. We shall denote this moduli
space by $G_i$.

Suppose now that $G(\alpha ;n,d,k)\neq \emptyset$ for at least one
value of $\alpha$.
\begin{proposition}Let $k<n$ and let $\alpha_L$ be the biggest critical value
smaller than $\frac{d}{n-k}$. The $\alpha$-range is divided into
a finite set of intervals determined by critical values
$$0=\alpha_0 <\alpha_1 <\alpha_2 <\ldots <\alpha_L
<\frac{d}{n-k}.$$If $\alpha
>\frac{d}{n-k}$, the moduli spaces are empty.
\end{proposition}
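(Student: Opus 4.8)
The plan is to establish the two claims separately: first that the critical values form a finite increasing sequence bounded above by $d/(n-k)$, and then that $G(\alpha;n,d,k)=\emptyset$ for $\alpha>d/(n-k)$. For the first claim, I would start from the general fact (valid for $k<n$) that there are only finitely many critical values; this was already noted above. Thus the positive critical values can be listed as $0=\alpha_0<\alpha_1<\cdots<\alpha_L<\cdots$, and the content is to show that the largest one is $<d/(n-k)$, equivalently that no critical value can be $\geq d/(n-k)$; together with the second claim this also forces the list to terminate at $\alpha_L$.

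For the bound, suppose $(E,V)$ is $\alpha$-stable of type $(n,d,k)$ with $k<n$. The key step is to use the subsystem obtained from the subsheaf generated by $V$: let $E'\subseteq E$ be the subbundle generically generated by the sections in $V$ (take the saturation), and set $V'=V$, so $k'=k$ and $n'=\operatorname{rank}(E')\leq n$. Since $k<n$ and $V$ generates a subsheaf of positive rank, one has $1\leq n'\leq n$. If $n'=n$ we instead argue with the quotient $F=E/\langle V\rangle$ directly as in the BGN picture; the point in all cases is that $\alpha$-stability applied to $(E',V)$ gives
\begin{equation*}
\frac{d'}{n'}+\alpha\frac{k}{n'}<\frac{d}{n}+\alpha\frac{k}{n},
\end{equation*}
and rearranging yields $\alpha(k/n'-k/n)<d/n-d'/n'$. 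When $n'<n$ the coefficient $k/n'-k/n$ is strictly positive, so $\alpha<\frac{n'(d n-d'n)}{n n'(\cdots)}$, and one bounds the right-hand side by analysing $d'$ via $H^0(E')\supseteq V$ together with the constraint coming from the quotient $F=E/E'$ having rank $n-n'$; degree and rank bounds on a bundle of rank $n-k$ admitting no sections (so that $\deg F\geq 0$ cannot be improved trivially, but stability of the system forces $\mu(F)\geq$ something) give $\alpha\leq d/(n-k)$. I would package this the way \cite{BGMN} does: the relevant inequality is precisely $\mu_\alpha(E',V')<\mu_\alpha(E,V)$ with $V'=V\cap H^0(E')$, and choosing the subsystem with $E'$ the maximal destabilising-type subbundle containing $\langle V\rangle$ isolates the extremal critical value at $d/(n-k)$.

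For the emptiness statement when $\alpha>d/(n-k)$: suppose $(E,V)$ were $\alpha$-stable. Let $E'=\langle V\rangle$ be the subsheaf generated by $V$ and pass to its saturation $\widetilde{E}'$, of rank $r\leq n$ with $r\geq 1$; then $V\subseteq H^0(\widetilde E')$, so $(\widetilde E',V)$ is a subsystem with $k'=k$. If $r<n$, stability gives $\mu_\alpha(\widetilde E',V)<\mu_\alpha(E,V)$, i.e. $\frac{\deg\widetilde E'}{r}+\alpha\frac{k}{r}<\frac{d}{n}+\alpha\frac{k}{n}$; since $k/r-k/n>0$ and $\deg\widetilde E'\geq 0$ (a bundle generated by sections has nonnegative degree), we get $\alpha\cdot k\bigl(\tfrac1r-\tfrac1n\bigr)<\tfrac dn$, hence $\alpha<\frac{d r}{n k}\cdot\frac{1}{\,(n-r)/n\,}\cdot\frac{k}{?}$ — more cleanly, $\alpha<\dfrac{d\,r}{n(n-r)}\le\dfrac{d(n-1)}{n}$, which is $<d/(n-k)$ for $k<n$; contradiction. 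If $r=n$, then $V$ generates $E$ generically, so there is a BGN extension $0\to\mathcal O^{\oplus k}\to E\to F\to 0$ with $\operatorname{rank}F=n-k\geq 1$ and $\deg F=d$; taking the subsystem $(\mathcal O^{\oplus k},V)$ (using that the trivial subbundle carries the $k$ sections) gives $\mu_\alpha(\mathcal O^{\oplus k},V)=0+\alpha\frac{k}{k}=\alpha$, while $\mu_\alpha(E,V)=\frac dn+\alpha\frac kn$; $\alpha$-stability demands $\alpha<\frac dn+\alpha\frac kn$, i.e. $\alpha\frac{n-k}{n}<\frac dn$, i.e. $\alpha<\frac{d}{n-k}$, again a contradiction. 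Hence no $\alpha$-stable coherent system of type $(n,d,k)$ exists for $\alpha>d/(n-k)$, and the moduli space is empty.

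The main obstacle is the case analysis in the bound — specifically, handling the subbundle generated by $V$ cleanly when it does not already equal $E$, and checking that the extremal inequality is genuinely attained only as $\alpha\to d/(n-k)$ (so that no critical value sits on or above that threshold). The BGN subsystem $(\mathcal O^{\oplus k},V)$ gives the sharp obstruction once $V$ generates $E$, so the real work is confined to the $r<n$ situation, where one must combine nonnegativity of $\deg\langle V\rangle$ with the rank inequality $r<n$; I expect this to reduce, as in \cite{BGMN}, to a short elementary estimate rather than anything deep.
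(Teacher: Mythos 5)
The paper itself gives no proof of this proposition; it is quoted as background from \cite{BGMN}, so the only question is whether your argument is correct. It is not quite: there is a genuine gap in the emptiness statement, which is the real content here. The standard (and essentially only) input you need is that the subsheaf of $E$ generated by a $k$-dimensional space of sections has rank at most $k$. Writing $E'$ for the saturation of $\langle V\rangle$, one then has a proper subsystem $(E',V)$ of type $(n_1,d_1,k)$ with $n_1\le k<n$ and $d_1\ge 0$, and the stability inequality gives $\alpha<\frac{d\,n_1}{k(n-n_1)}\le\frac{d}{n-k}$, the last step using precisely $n_1\le k$ and the monotonicity of $x\mapsto x/(n-x)$. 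You never invoke $n_1\le k$, and this breaks your case analysis in two ways.

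First, in your case $r<n$ the final inequality is false: from $\alpha k\bigl(\tfrac1r-\tfrac1n\bigr)<\tfrac dn$ one gets $\alpha<\frac{dr}{k(n-r)}$ (note $k$, not $n$, in the denominator), and bounding this only by $r\le n-1$ yields $\frac{d(n-1)}{k}$, which is not $\le \frac{d}{n-k}$; even your stated bound $\frac{d(n-1)}{n}<\frac{d}{n-k}$ fails already for $(n,k)=(3,1)$, since $\frac{2d}{3}>\frac{d}{2}$. Second, the case $r=n$, where you derive the correct threshold via the subsystem $(\mathcal O^{\oplus k},V)$, is vacuous: since $\operatorname{rank}\langle V\rangle\le\dim V=k<n$, the sections can never generate $E$ generically, and moreover the existence of $\mathcal O^{\oplus k}$ as a subsheaf of $E$ (injectivity of the evaluation map) is not automatic — sections that are linearly independent over $\mathbb C$ may be dependent over the function field. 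The correct sharp case is $n_1=k$, $d_1=0$, which is exactly where your $(\mathcal O^{\oplus k},V)$ computation lives, but it must be reached through the rank bound $n_1\le k$, not through $r=n$. Your first paragraph on bounding the critical values is also too vague to check, but it becomes unnecessary once the emptiness statement is proved correctly, since the finiteness of critical values for $k<n$ is already recorded earlier in the section.
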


The difference between adjacent moduli spaces in the family $G_0$,
$G_1$, ..., $G_L$ is accounted for by the subschemes $G_i^+
\subseteq G_i$ and $G_i^- \subseteq G_{i-1}$, where $G_i^+$
consists of all $(E,V)$ in $G_i$ which are not $\alpha$-stable if
$\alpha < \alpha_i$ and $G_i^- \subseteq G_{i-1}$ contains all
$(E,V)$ in $G_{i-1}$ which are not $\alpha$-stable if $\alpha >
\alpha_i$. It follows that $G_i -G_i^+=G_{i-1}-G_i^-$ and that $G_i$ is transformed into $G_{i-1}$ by removal
of $G_i^+$ and the insertion of $G_i^-$.

\begin{definition}\textnormal{We refer to such a procedure as a \emph{flip}. We call the subschemes $G_i^{\pm}$ \emph{the flip loci}. We say
that a flip is \emph{good} if the flip loci have strictly positive codimension in
every component of the moduli spaces $G_i$ and $G_{i-1}$
respectively. Under these conditions the moduli spaces are
birationally equivalent.}
\end{definition}

\section{Study of the BGN extensions}


When $k<n$ we denote by $G_L(n,d,k)$ the moduli space of coherent
systems of type $(n,d,k)$ for $\alpha$ large, i.e., $\alpha_L
<\alpha < \frac{d}{n-k}$.

\begin{definition}[\cite{BG,BGN}]\textnormal{\label{BGN}A \emph{BGN extension} is an extension of
vector bundles
\begin{equation*}0 \rightarrow \mathcal{O}^{\oplus k}\rightarrow E
\rightarrow F \rightarrow 0
\end{equation*}which satisfies the following conditions:
\begin{itemize}\item[(i)] rank$E=n>k$,
\item[(ii)] deg$E=d>0$, \item[(iii)] $H^0(F^{\ast})=0$,
\item[(iv)] if $e=(e_1,\ldots ,e_k)\in H^1(F^{\ast}\otimes
\mathcal{O}^{\oplus k})=H^1(F^{\ast})^{\oplus k}$ denotes the
class of the extension, then $e_1 ,\ldots ,e_k$ are linearly
independent as vectors in $H^1(F^{\ast})$.\end{itemize}}
\end{definition}

\begin{definition}\textnormal{Two BGN extensions are \emph{equivalent} if one has a
commutative diagram
\begin{equation*}\xymatrix@R=0.5cm{0 \ar[r]& \mathcal{O}^{\oplus k'}\ar[d]\ar[r]& E'
\ar[r]\ar[d]& F' \ar[r]\ar[d]& 0 \\
0 \ar[r]& \mathcal{O}^{\oplus k}\ar[r]& E \ar[r]& F \ar[r]& 0}
\end{equation*}where the vertical arrows are isomorphisms, in particular $k=k'$. An
equivalence class of BGN extensions will be called a \emph{BGN
extension class}.}
\end{definition}

\begin{proposition}[\cite{BG},\cite{BGMN}]\label{bgmn}Suppose that $0<k<n$ and $d>0$. Let
$\alpha_L <\alpha <\frac{d}{n-k}$. Let $(E,V)$ be an
$\alpha$-semistable coherent system of type $(n,d,k)$. Then
$(E,V)$ defines a BGN extension class represented by an extension
\begin{equation*}0 \rightarrow \mathcal{O}^{\oplus k}\rightarrow E
\rightarrow F \rightarrow 0
\end{equation*}with $F$ semistable. Conversely, any BGN extension
of type $(n,d,k)$ in which the quotient $F$ is stable gives rise
to an $\alpha$-stable coherent system of the same type.
\end{proposition}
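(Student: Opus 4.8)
The plan is to establish the two directions of Proposition~\ref{bgmn} separately, in both cases exploiting the interplay between $\alpha$-stability for ``large'' $\alpha$ and the existence of the trivial subsheaf $\mathcal{O}^{\oplus k}$. For the first direction, starting from an $\alpha$-semistable coherent system $(E,V)$ of type $(n,d,k)$ with $\alpha_L<\alpha<\frac{d}{n-k}$, the first step is to show that the $k$ sections spanning $V$ have no common zeros and more precisely that the evaluation map $\mathcal{O}^{\oplus k}\to E$ they define is injective as a sheaf map with locally free quotient. The key point is that if the image subsheaf had rank $<k$ or the quotient had torsion, one could produce a subsystem $(E',V')$ with $k' = \dim V' $ large relative to $n' = \operatorname{rank} E'$, hence with $\mu_\alpha(E',V')$ exceeding $\mu_\alpha(E,V)$ once $\alpha$ is close to $\frac{d}{n-k}$; this contradicts $\alpha$-semistability. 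This gives the short exact sequence $0\to\mathcal{O}^{\oplus k}\to E\to F\to 0$ with $F=E/\mathcal{O}^{\oplus k}$ locally free of rank $n-k$ and degree $d$. Conditions (i) and (ii) of Definition~\ref{BGN} are then immediate.

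The next step is to verify conditions (iii) and (iv) and the semistability of $F$. For (iii), $H^0(F^\ast)=0$: a nonzero map $F\to\mathcal{O}$ would, composed with $E\to F$, give a section of the destabilizing type, or more directly a subsheaf contradicting semistability; alternatively one dualizes the sequence and uses that $E$ has positive degree and all its quotients of sufficiently negative slope are killed by the semistability bound. For (iv), the linear independence of $e_1,\dots,e_k$ in $H^1(F^\ast)$: if a nontrivial linear combination vanished, one could split off a trivial summand $\mathcal{O}$ from the extension, exhibiting $\mathcal{O}\subset E$ together with a one-dimensional subspace of $V$ — but then $(\mathcal{O},\langle s\rangle)$ has $\alpha$-slope $0+\alpha\cdot 1 = \alpha$, which exceeds $\mu_\alpha(E,V)=\frac{d}{n}+\alpha\frac{k}{n}$ precisely when $\alpha > \frac{d}{n-k}$; for $\alpha$ just below this bound one instead observes that a trivial subsheaf of rank $>$ its section-dimension quotient structure still yields a subsystem violating the stability inequality in the large-$\alpha$ regime. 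Semistability of $F$ then follows because a destabilizing quotient $F\twoheadrightarrow F''$ pulls back to a quotient of $E$, and combined with the sections in $V$ produces a subsystem (the kernel) whose $\alpha$-slope, for $\alpha$ large, is governed by $\deg F''/\operatorname{rank} F''$ and hence violates $\alpha$-semistability of $(E,V)$.

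For the converse, take a BGN extension $0\to\mathcal{O}^{\oplus k}\to E\to F\to 0$ with $F$ stable, and set $V = \operatorname{Im}(H^0(\mathcal{O}^{\oplus k})\to H^0(E))$; condition (iv) guarantees $\dim V = k$, and (iii) together with the long exact sequence shows $H^0(E)$ contains this $k$-dimensional space, so $(E,V)$ is a coherent system of type $(n,d,k)$. To prove $\alpha$-stability for $\alpha_L<\alpha<\frac{d}{n-k}$, let $(E',V')$ be a proper subsystem; write $E'' = E'\cap\mathcal{O}^{\oplus k}$ (a subsheaf of the trivial bundle, hence of slope $\leq 0$, with $V''$ of dimension at most $\operatorname{rank} E''$... actually $\leq k$) and project $E'$ to $F$ with image $F'\subseteq F$. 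One then compares $\mu_\alpha(E',V')$ with $\mu_\alpha(E,V)$ by splitting the contributions: the ``vertical'' part inside $\mathcal{O}^{\oplus k}$ contributes non-positively to the degree while contributing at most proportionally to $V$, and the ``horizontal'' part $F'\subseteq F$ satisfies $\mu(F') < \mu(F) = \frac{d}{n-k}$ by stability of $F$ (with equality excluded unless $F'=F$). A short computation with these slope inequalities, using that $\alpha < \frac{d}{n-k}$ on the nose, shows $\mu_\alpha(E',V')<\mu_\alpha(E,V)$ for every proper subsystem.

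The main obstacle is the bookkeeping in the converse direction: correctly handling the case where the subsheaf $E'$ is neither contained in $\mathcal{O}^{\oplus k}$ nor maps isomorphically onto a subsheaf of $F$, so that one must carefully track the three quantities $\operatorname{rank} E'$, $\deg E'$, $\dim V'$ through the snake-lemma diagram relating $E'\cap\mathcal{O}^{\oplus k}$, $E'$, and its image in $F$, and then verify that the resulting inequality is strict in the half-open parameter range — in particular checking that the borderline subsystems (those with $\mu_\alpha(E',V')$ closest to $\mu_\alpha(E,V)$) do not destabilize precisely because $\alpha$ is strictly less than $\frac{d}{n-k}$ and $F$ is strictly stable. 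This is where the stability hypothesis on $F$ (as opposed to mere semistability) is essential, matching the asymmetry in the statement.
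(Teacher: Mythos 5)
First, a point of reference: the paper does not prove this proposition at all --- it is imported from \cite{BG} and \cite{BGMN} and stated as a citation --- so the only in-paper material your argument can be measured against is the slope analysis of Section 4, which redoes the converse direction in the harder situation where $F$ is merely semistable. Your converse is the specialization of exactly that computation (the estimate $\deg(E'\cap\mathcal{O}^{\oplus k})\le 0$, the strict inequality $\mu(F')<\mu(F)$ forced by stability of $F$, and the $\varepsilon=d-\alpha(n-k)>0$ bookkeeping), and in outline it is sound, including your correct identification of where stability rather than semistability of $F$ is used.

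The genuine gap is in your verification of condition (iv) of Definition \ref{BGN} in the forward direction. You propose the subsystem $(\mathcal{O},\langle s\rangle)$, whose $\alpha$-slope is $\alpha$, and you yourself note that $\alpha>\mu_{\alpha}(E,V)=\frac{d}{n}+\alpha\frac{k}{n}$ holds only when $\alpha>\frac{d}{n-k}$, i.e.\ outside the range under consideration; the sentence you offer as a fallback does not specify any subsystem, so as written condition (iv) is not established. The destabilizing object is the \emph{complementary} one. If $e_1,\dots,e_k$ are linearly dependent, an automorphism of $\mathcal{O}^{\oplus k}$ arranges $e_k=0$; then the pushout extension $0\to\mathcal{O}\to E/\mathcal{O}^{\oplus(k-1)}\to F\to 0$ splits, producing a surjection $E\to\mathcal{O}$ whose kernel $E'$ is a subbundle of rank $n-1$ and degree $d$ containing $\mathcal{O}^{\oplus(k-1)}$. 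The subsystem $(E',V\cap H^0(E'))$ has type $(n-1,d,k')$ with $k'\ge k-1$, and
\[
\mu_{\alpha}(E',V')-\mu_{\alpha}(E,V)\;\ge\;\frac{d}{n(n-1)}+\alpha\Bigl(\frac{k-1}{n-1}-\frac{k}{n}\Bigr)\;=\;\frac{d-\alpha(n-k)}{n(n-1)}\;>\;0
\]
for every $\alpha<\frac{d}{n-k}$, contradicting $\alpha$-semistability. The same ``pass to the kernel of a map to $\mathcal{O}$'' device is what makes your sketch of condition (iii) precise: a nonzero section of $F^{\ast}$ yields $E\to F\to\mathcal{O}$ whose kernel $N$ has rank $n-1$, degree $\ge d$, and still contains all of $\mathcal{O}^{\oplus k}$, so $(N,V)$ violates semistability for any $\alpha>0$. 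With these two repairs your outline closes up and agrees with the standard argument of \cite{BG}.
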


\begin{remark}\textnormal{Note that if in the last part of Proposition
\ref{bgmn} our quotient bundle $F$ is only semistable, the
coherent system can fail to be $\alpha$-stable or even
$\alpha$-semistable.}
\end{remark}

\begin{proposition}[\cite{BG},\cite{BGMN},\cite{BGMMN}]\label{prop:restate}
Suppose $n\ge2$ and $0<k\le n$. Then $G_L(n,d,k)\ne\emptyset$ if
and only if $$\textnormal{$d>0$, $ $ $k\le n+\frac1g(d-n)$ $ $ and
$ $ $(n,d,k)\ne(n,n,n)$,}$$and it is then always irreducible and
smooth of dimension $\beta(n,d,k)=n^2(g-1)+1-k(k-d+n(g-1))$.

If $0<k<n$, $G_L(n,d,k)$ is birationally equivalent to a fibration
over the moduli space of stable vector bundles, $\mathcal{M}(n-k,d)$ with fibre the
Grassmannian $\Gr(k,d+(n-k)(g-1))$. More precisely, if $W$ denotes
the subvariety of $G_L(n,d,k)$ consisting of coherent systems for
which the quotient bundle $F$ is strictly semistable, then
$G_L(n,d,k)\setminus W$ is isomorphic to a Grassmann fibration
over $\mathcal{M}(n-k,d)$.

If in addition $\gcd(n-k,d)=1$, then $W=\emptyset$ and
$G_L(n,d,k)\rightarrow \mathcal{M}(n-k,d)$ is the Grassmann
fibration associated to some vector bundle over
$\mathcal{M}(n-k,d)$.
\end{proposition}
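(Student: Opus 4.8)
The plan is to reduce everything to known structural facts about coherent systems for large $\alpha$ together with the BGN correspondence of Proposition \ref{bgmn}. First I would establish the non-emptiness criterion and the dimension formula. The dimension count comes from deformation theory of coherent systems: at an $\alpha$-stable point $(E,V)$ the Zariski tangent space is an $\mathrm{Ext}^1$ group fitting into the standard hypercohomology long exact sequence associated to $(E,V)$, and one computes $\beta(n,d,k)=n^2(g-1)+1-k(k-d+n(g-1))$ by Riemann--Roch once one knows that the relevant $\mathrm{Ext}^2$ (obstruction) vanishes for large $\alpha$; smoothness of $G_L$ follows from this vanishing. The non-emptiness bounds $d>0$, $k\le n+\tfrac1g(d-n)$ and $(n,d,k)\ne(n,n,n)$ are exactly the conditions under which a BGN extension of type $(n,d,k)$ with semistable (indeed stable, generically) quotient $F$ of rank $n-k$ and degree $d$ can be built: one needs $h^1(F^\vee)=h^0(F^\vee\otimes K)\ge k$ to choose $k$ linearly independent extension classes, and $h^0(F^\vee)=0$ forces the slope inequality; translating $h^1(F^\vee)\ge k$ via Riemann--Roch on a general stable $F$ gives the stated numerical bound, with $(n,n,n)$ excluded as the degenerate case $F=0$, $d=n$.

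Next I would prove the birational statement. By Proposition \ref{bgmn}, every $(E,V)\in G_L(n,d,k)$ yields a BGN extension $0\to\mathcal O^{\oplus k}\to E\to F\to 0$ with $F$ semistable of type $(n-k,d)$, and conversely when $F$ is \emph{stable} this extension (with $k$ linearly independent classes in $H^1(F^\vee)$) is $\alpha$-stable. So over the open locus $\mathcal M^s(n-k,d)\subseteq\mathcal M(n-k,d)$ of stable bundles, the data of an $\alpha$-stable $(E,V)$ is precisely: a stable $F$, plus a $k$-dimensional subspace of $H^1(F^\vee)$ that one checks is automatically base-point-free in the required sense (linear independence of the $e_i$ is the open condition cutting out the Grassmannian of honest $k$-planes). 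Since $F$ stable of this type has $h^0(F^\vee)=0$, Riemann--Roch gives $h^1(F^\vee)=d+(n-k)(g-1)$, a constant; hence the relative $\mathscr Ext^1$ sheaf of Subsection \ref{Lange} is locally free of that rank and commutes with base change, and the universal extension machinery (Proposition \ref{ext-projective}, or rather its Grassmannian analogue built the same way) produces a family over the Grassmann bundle $\Gr(k,\,d+(n-k)(g-1))$ associated to $\mathscr Ext^1_f(\mathcal F^\vee,\mathcal O)$ over $\mathcal M^s(n-k,d)$. This family has the correct modular property, giving an isomorphism between $G_L(n,d,k)\setminus W$ and that Grassmann fibration, where $W$ is exactly the complement — the locus where $F$ is strictly semistable. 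Since $\mathcal M(n-k,d)\setminus\mathcal M^s(n-k,d)$ has positive codimension, $W$ is a proper closed subset, yielding birational equivalence; irreducibility of $G_L$ follows from irreducibility of $\mathcal M(n-k,d)$ and of the Grassmann fibers.

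Finally, for the coprime case $\gcd(n-k,d)=1$: then semistability equals stability for bundles of type $(n-k,d)$, so $W=\emptyset$, $\mathcal M(n-k,d)=\mathcal M^s(n-k,d)$ is projective and fine, a universal (Poincar\'e) bundle $\mathcal F$ exists on $X\times\mathcal M(n-k,d)$, and the whole construction globalizes: $\mathscr Ext^1_f(\mathcal F^\vee,\mathcal O)$ is a genuine vector bundle on $\mathcal M(n-k,d)$ and $G_L(n,d,k)$ is its associated Grassmann bundle $\Gr(k,\,\mathscr Ext^1_f(\mathcal F^\vee,\mathcal O))$ outright, not merely birationally. The main obstacle I anticipate is the bookkeeping in the universal-family step: one must verify that the hypotheses of the existence results in Subsection \ref{Lange} (reducedness, base-change for $\mathscr Ext^i_f$ for $i=0,1$, the vanishing of the relevant $H^1$, $H^2$ as in Remark \ref{remark}) genuinely hold here — the $i=0$ part uses $\mathscr Hom(\mathcal F^\vee,\mathcal O)=0$ fiberwise since $h^0(F^\vee)=0$ for stable $F$ of positive-degree quotient type, which is what makes Remark \ref{remark}(a) applicable — and that the \emph{open} condition defining the Grassmann subfibration (linear independence of the extension classes, i.e. genuine $k$-planes rather than lower-dimensional images) matches exactly the BGN condition (iv), so that no $\alpha$-semistable-but-not-stable points are gained or lost along the way.
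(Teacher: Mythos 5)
This proposition is imported verbatim from \cite{BG}, \cite{BGMN} and \cite{BGMMN}; the paper gives no proof of it, so there is no internal argument to compare against. Your sketch follows the standard route of those references: the numerical non-emptiness condition is exactly the translation via Riemann--Roch of $k\le h^1(F^{\ast})=d+(n-k)(g-1)$ (which indeed rearranges to $k\le n+\frac1g(d-n)$), the Grassmann fibration over the stable locus is the universal-extension construction of Subsection \ref{Lange} applied fibrewise to $H^1(F^{\ast})^{\oplus k}$ with condition (iv) of Definition \ref{BGN} cutting out genuine $k$-planes, and the coprime case globalizes via the Poincar\'e bundle. That is all consistent with how the rest of the paper uses the result.

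Two points are too quick. First, in the non-coprime case there is no Poincar\'e bundle on $\mathcal{M}(n-k,d)$, so the Grassmann fibration must be built over the Quot scheme and descended by the free $PGL(N)$-action (as the paper does repeatedly, e.g.\ in the proofs of Proposition \ref{sequenceof projective} and Theorem \ref{H2}); your appeal to $\mathscr{E}xt^1_f$ over $\mathcal{M}^s(n-k,d)$ silently assumes a universal bundle that need not exist. Second, irreducibility of the Grassmann fibration only gives irreducibility of $G_L(n,d,k)\setminus W$; to conclude for all of $G_L(n,d,k)$ you need that every irreducible component meets this open locus, which in the references is deduced from the deformation-theoretic smoothness (every component has dimension $\beta(n,d,k)$) combined with a codimension estimate for $W$ of the kind recalled in this paper as \cite[Corollary 7.10]{BGMMN}. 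Neither issue invalidates your outline, but both are genuinely needed to close the argument.
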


Our next goal is to study what happens when the quotient bundle $F$ is
strictly semistable. To this end, consider a BGN extension as
above:
\begin{equation}\label{extension1}0 \rightarrow \mathcal{O}^{\oplus k}\rightarrow E
\rightarrow F \rightarrow 0,
\end{equation}
in which $F$ is strictly semistable with rank $n-k$ and degree
$d> 0$. Let
\begin{equation*}(e_1 ,\ldots ,e_k)\in H^1(F^{\ast}\otimes \mathcal{O}^{\oplus
k})=H^1(F^{\ast})^{\oplus k}
\end{equation*}be the class of the extension of $E$, with $\{ e_i \}_i$ linearly independent
as vectors in $H^1(F^{\ast})$.

Let $(E,V)$ be the coherent system corresponding to the extension
(\ref{extension1}). Consider a subsystem $(E',V')$. In general,
this subsystem determines an extension
\begin{equation*}\xymatrix@R=0.5cm{0 \ar[r]& W'\ar[d]\ar[r]& E'
\ar[r]\ar[d]& F' \ar[r]\ar@{^{(}->}[d]& 0 \\
0 \ar[r]& \mathcal{O}^{\oplus k}\ar[r]& E \ar[r]& F \ar[r]& 0}
\end{equation*}with $F'$ a subsheaf of $F$, $W'$ a subbundle of
$\mathcal{O}^{\oplus k}$. Let $\alpha \in (\alpha_L ,
\frac{d}{n-k})$ be sufficiently close to $ \frac{d}{n-k}$. We are
going to study the relationship between $\mu_{\alpha}(E',V')$ and
$\mu_{\alpha}(E,V)$. It is proved in \cite{BG} (Lemma 4.3) that for an extension
\begin{equation*}0\rightarrow W'\rightarrow E'
\rightarrow F' \rightarrow 0,
\end{equation*}$\deg (W')\leq 0$ and it is equal to $0$ if and only if $W'\cong
\mathcal{O}^{\oplus k'}$. Moreover
$h^0(W')\leq \rank (W')$ and it is equal if and only if $W'\cong
\mathcal{O}^{\oplus k'}$. This can be proved by considering the
vector bundle generated by the global sections of $W'$ and
bearing in mind that if $h^0(W')= \rank (W')$ and $W'\ncong
\mathcal{O}^{\oplus k'}$ then the degree of $W'$ would be
positive, which contradicts the fact that $\deg (W')\leq 0$.

We divide the study into the following cases:

\subsection*{\S . $F'$ proper, non-trivial subsheaf and $W'\ncong
\mathcal{O}^{\oplus k'}$}

Let $l'=\rank (W')$ and $k'=h^0(W')$. Since $\deg(W')\leq 0$ we
have
\begin{align*}\mu (E')=\frac{n'-l'}{n'}\cdot\frac{(\deg W' +\deg F')}{n'-l'}\leq
\frac{\deg F'}{n'-l'}\cdot \frac{n'-l'}{n'}\leq \mu (F) \cdot
\frac{n'-l'}{n'}.
\end{align*}

Following the computations of \cite{BG} page 139, we have
\begin{equation*}\mu_{\alpha}(E',V')-\mu_{\alpha}(E,V)\leq \frac{\varepsilon}{n-k}
\bigg{[} \frac{k}{n}-
\frac{k'}{n'}\bigg{]}+\mu(F)\frac{k'-l'}{n'},
\end{equation*}where $\varepsilon =d-\alpha (n-k) >0$ and
we know that $d>0$ and $\mu(F)>0$. Lemma 4.3 of \cite{BG} implies
that $k'< l'$, so choosing $\varepsilon$ sufficiently small -note
that in Section \ref{Coherent} we saw that the condition of
$\alpha$-stability does not change within an interval $(\alpha_i
,\alpha_{i+1})$, i.e., $G(\alpha ;n,d,k)=G(\alpha' ;n,d,k)$ for
all $\alpha ,\alpha' \in (\alpha_i ,\alpha_{i+1})$- we have
\begin{equation*}\frac{\varepsilon}{n-k}
\bigg{[} \frac{k}{n}-
\frac{k'}{n'}\bigg{]}+\mu(F)\frac{k'-l'}{n'}<0\end{equation*}and
we conclude that $(E',V')$ does not contradict the
$\alpha$-stability of $(E,V)$.

\subsection*{\S . The cases: $F'=0$; $F'=F$ and $W'\ncong
\mathcal{O}^{\oplus k'}$; $F'=F$ and $W'=\mathcal{O}^{\oplus
k'}$.}

These cases follow in the same way as Theorem 4.2 of \cite{BG},
and $(E',V')$ does not contradict the $\alpha$-stability of
$(E,V)$.

\subsection*{\S . $F'$ a proper, non-trivial subsheaf and
$W'=\mathcal{O}^{\oplus k'}$}

We know that $\deg (W')=0$, and $(E',V')$ is a subsystem of type
$(n',d',k')$ where:
\begin{equation*}\textnormal{$d'=\deg F'$, $n'=k'+\rank F'$, and
$\mu (F')\leq \mu (F)$ $\Rightarrow$ $\frac{d'}{n'-k'}\leq
\frac{d}{n-k}$}.
\end{equation*}
So, bearing in mind that $\alpha \in (\alpha_L , \frac{d}{n-k})$
sufficiently close to $ \frac{d}{n-k}$, let $\alpha =\frac{d}{n-k}
- \frac{\varepsilon}{n-k}$ with $\varepsilon$ sufficiently small,
we have
\begin{align}\label{stable}\mu_{\alpha}(E',V')& \nonumber -\mu_{\alpha}(E,V)
=\frac{d'}{n'}-\frac{d}{n}+\alpha ( \frac{k'}{n'}-
\frac{k}{n})=\\&
=\frac{1}{n'(n-k)}[nd'-dn'-(kd'-dk')]+\frac{\varepsilon}{n-k}
\bigg{[} \frac{k}{n}- \frac{k'}{n'}\bigg{]}.
\end{align}In the case in which $\frac{d}{n-k}=\mu (F)>\mu
(F')=\frac{d'}{n'-k'}$, we have $nd'-dn'-(kd'-dk')<0$, so choosing
$\varepsilon$ properly, this does not contradict the
$\alpha$-stability of $(E,V)$. In the other case, when $\mu
(F)=\mu (F')$, we obtain
\begin{align*}\mu_{\alpha}(E',V') \nonumber -\mu_{\alpha}(E,V)=
\frac{\varepsilon}{n-k} \bigg{[} \frac{k}{n}-
\frac{k'}{n'}\bigg{]}.
\end{align*}So the $\alpha$-(semi)stability depends on whether $\frac{k}{n}-
\frac{k'}{n'}$ is greater, equal or less than $0$. Hence, we only have
trouble when $\frac{k}{n}\geq \frac{k'}{n'}$.

If we study the relationship between the invariants $k$, $k'$, $n$
and $n'$, we find that
\begin{equation*}\rank(F')=n'-k'<\rank(F)=n-k,
\end{equation*}because $F'$ is a proper subbundle
of $F$.

From now on, we will study the cases in which the coherent system
that comes from our original BGN extension fails to be
$\alpha$-stable. These cases are those in which we have a
coherent subsystem $(E',V')$ of type $(n',d',k')$ such that
$n-k>n'-k'>0$, $\frac{k}{n}\geq \frac{k'}{n'}$ and
$\mu(F)=\mu(F')$. Note that the condition $\frac{k}{n}\geq
\frac{k'}{n'}$ is equivalent to $\frac{k}{n-k}\geq
\frac{k'}{n'-k'}$. We can restrict ourselves to the case in which
our extension
\begin{equation}\label{BGNE''}\xymatrix@C=0.4cm{0 \ar[r] &\mathcal{O}^{\oplus
k'}\ar[r]& E' \ar[r]& F' \ar[r]& 0 }
\end{equation}is either an extension verifying the properties (i)-(iii)
of the definition of BGN extension (Definition \ref{BGN}) or, for
the smallest value of $k'$ for which the extension (\ref{BGNE''})
exists, a BGN extension. This is proved in
\begin{theo}\label{bgn}A BGN extension (\ref{extension1}) fails to be $\alpha$-stable
for $\alpha_L < \alpha < \frac{d}{n-k}$ if and only if there
exists a BGN extension (\ref{BGNE''}) and a commutative diagram
\begin{equation}\label{diagram}\xymatrix@R=0.4cm{& 0
\ar[d]& 0\ar[d]& 0 \ar[d]\\
0 \ar[r] &\mathcal{O}^{\oplus
k'}\ar[d]\ar[r]& E' \ar[d]\ar[r]& F' \ar[d]\ar[r]& 0 \\
0 \ar[r] &\mathcal{O}^{\oplus k}\ar[r]& E \ar[r]& F \ar[r]& 0  }
\end{equation}such that
\begin{equation}\label{conditions}\textnormal{$n-k>\rank F' >0$, $\frac{k}{n-k}\geqslant \frac{k'}{\rank F'}$
and $\mu (F) =\mu (F')$.}
\end{equation}
\end{theo}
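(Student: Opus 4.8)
The plan is to establish the two implications separately, both of which are essentially extracted from the case analysis carried out in the paragraphs immediately preceding the statement. For the ``only if'' direction, I would start from the assumption that the coherent system $(E,V)$ coming from the BGN extension (\ref{extension1}) is not $\alpha$-stable for $\alpha$ close to $\frac{d}{n-k}$. Then there is a proper subsystem $(E',V')$ with $\mu_\alpha(E',V')\ge\mu_\alpha(E,V)$, and it fits in a diagram with a subsheaf $F'\subseteq F$ and a subbundle $W'\subseteq\mathcal{O}^{\oplus k}$. Running through the four cases already treated in the text (the case $F'$ proper nontrivial with $W'\ncong\mathcal{O}^{\oplus k'}$; the cases $F'=0$ or $F'=F$; and the case $F'$ proper nontrivial with $W'=\mathcal{O}^{\oplus k'}$), only the last one can fail to contradict $\alpha$-stability, and there only when $\operatorname{rank}F'<\operatorname{rank}F$, $\frac{k}{n}\ge\frac{k'}{n'}$ and $\mu(F')=\mu(F)$ — i.e. exactly the conditions (\ref{conditions}), using $n'-k'=\operatorname{rank}F'$ and the equivalence of $\frac{k}{n}\ge\frac{k'}{n'}$ with $\frac{k}{n-k}\ge\frac{k'}{\operatorname{rank}F'}$. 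Moreover in that case $W'=\mathcal{O}^{\oplus k'}$ and $F'\hookrightarrow F$, so the diagram has exactly the shape (\ref{diagram}), with the first row the extension (\ref{BGNE''}).

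What still has to be checked for this direction is that the top row (\ref{BGNE''}) can be taken to satisfy properties (i)--(iii) of Definition \ref{BGN}, and that for the \emph{smallest} $k'$ for which such a subsystem exists it is a genuine BGN extension (condition (iv) as well). Here I would argue as follows: since $W'=\mathcal{O}^{\oplus k'}$ sits inside $\mathcal{O}^{\oplus k}$ and $k'=h^0(W')$, property (iii), $H^0(F'^{\vee})=0$, follows because a nonzero section of $F'^{\vee}$ would split off a trivial quotient and, composed with $E'\to F'$, would enlarge the trivial subbundle of $E'$ beyond $\mathcal{O}^{\oplus k'}$, contradicting $h^0(\mathcal{O}^{\oplus k'})=k'=h^0(E')$ in the relevant range; properties (i),(ii) ($\operatorname{rank}E'=n'>k'$, $\deg E'=d'>0$) hold because $\operatorname{rank}F'=n'-k'>0$ and $\deg F'=d'$ with $\mu(F')=\mu(F)>0$. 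Finally, if the extension class $e'=(e_1',\dots,e_{k'}')\in H^1(F'^{\vee})^{\oplus k'}$ has a linear dependence, one can, after a change of basis of $\mathcal{O}^{\oplus k'}$, assume some $e_i'=0$; that summand then splits off a trivial sub-bundle $\mathcal{O}$ of $E'$ with $h^0=1$, producing a subsystem with smaller $k'$ satisfying the same inequalities — so for the minimal $k'$ condition (iv) holds automatically and (\ref{BGNE''}) is a BGN extension.

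For the ``if'' direction, suppose we are given a BGN extension (\ref{BGNE''}) and a commutative diagram (\ref{diagram}) with the numerical conditions (\ref{conditions}). I would set $V'=H^0(\mathcal{O}^{\oplus k'})\subseteq V$, so that $(E',V')$ is a subsystem of type $(n',d',k')$ with $n'=k'+\operatorname{rank}F'$, $d'=\deg F'$. Plugging into the slope computation (\ref{stable}) and using $\mu(F)=\mu(F')$, which forces $nd'-dn'-(kd'-dk')=0$, one gets
\[
\mu_\alpha(E',V')-\mu_\alpha(E,V)=\frac{\varepsilon}{n-k}\Bigl[\frac{k}{n}-\frac{k'}{n'}\Bigr]\ge 0
\]
precisely because $\frac{k}{n}\ge\frac{k'}{n'}$; hence $(E',V')$ violates (or at best fails to strictly satisfy) $\alpha$-stability, so $(E,V)$ is not $\alpha$-stable. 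One small point to dispatch is that $(E',V')$ is a \emph{proper} subsystem, i.e. $(E',V')\ne(E,V)$: this is guaranteed by $\operatorname{rank}F'<\operatorname{rank}F$, which makes $E'\subsetneq E$.

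The main obstacle is the bookkeeping in the ``only if'' direction: verifying that properties (i)--(iv) of Definition \ref{BGN} really do hold for the minimal-$k'$ choice, i.e. controlling how $h^0$ behaves along the sub-extension and ruling out that the sections forcing condition (iv) interact badly with the global sections coming from $\mathcal{O}^{\oplus k}$. The slope inequalities themselves are routine once the case division is in place, so the crux is really this descent-to-minimal-$k'$ argument and the identification of $W'$ with a genuinely trivial bundle with the expected number of sections.
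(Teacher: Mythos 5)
Your overall route coincides with the paper's: the forward implication is immediate from the diagram by taking the subsystem $(E',V')$ with $V'=H^0(\mathcal{O}^{\oplus k'})$ and feeding $\mu(F)=\mu(F')$ and $\frac{k}{n}\geq\frac{k'}{n'}$ into the slope identity (\ref{stable}), and the converse is extracted from the case analysis preceding the statement, the only real work being to arrange that the top row of (\ref{diagram}) is a genuine BGN extension. Your treatment of condition (iv) -- applying an automorphism of $\mathcal{O}^{\oplus k}$ to make the linearly dependent images of the extension classes vanish and descending to a smaller $k'$, noting that the inequality $\frac{k}{n-k}\geq\frac{k'}{\rank F'}$ is only improved by decreasing $k'$ -- is exactly the paper's argument, which simply passes in one step to $k''=\dim\langle e_1',\ldots,e_k'\rangle$ rather than descending one class at a time.

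The one step that does not work as written is your justification of condition (iii). A nonzero section of $F'^{\ast}$ is just a nonzero homomorphism $F'\to\mathcal{O}_X$; it does not split off a trivial quotient of $F'$, and the identity $h^0(E')=k'$ that you invoke is not available at this stage (it is a consequence of the BGN conditions you are trying to establish, not an input). The correct -- and much shorter -- argument, which is the one the paper gives, is that $F'$ is a subbundle of the semistable bundle $F$ with $\mu(F')=\mu(F)$, hence is itself semistable of positive slope; a nonzero map $F'\to\mathcal{O}_X$ would then have as image a rank-one subsheaf of $\mathcal{O}_X$, i.e.\ a quotient sheaf of $F'$ of degree $\leq 0$, contradicting semistability. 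With that substitution your proof is complete and agrees with the paper's.
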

\begin{proof}The existence of (\ref{diagram}) immediately implies
that (\ref{extension1}) is not $\alpha$-stable. Conversely, if
(\ref{extension1}) is not $\alpha$-stable, then there exists
a diagram (\ref{diagram}) for which (\ref{conditions}) holds. We
need only show that we can choose (\ref{diagram}) so that
(\ref{BGNE''}) is a BGN extension.

Now (\ref{diagram}) and (\ref{conditions}) immediately imply
conditions (i) and (ii) of Definition \ref{BGN} for the extension
(\ref{BGNE''}). Moreover, since $F$ is semistable, so is $F'$;
since $\deg F'
>0$ this implies that $H^0(F'^{\ast})=0$, giving (iii).

Condition (iv), however, is not automatic. Let $(e_1,\ldots ,e_k)$
be the $k$-tuple classifying (\ref{extension1}) and let
$(e'_1,\ldots ,e'_k)$ be the image of this $k$-tuple under the
surjection
$H^1(F^{\ast})\xymatrix@C=0.5cm{\ar@{->>}[r]&}H^1(F'^{\ast})$. Put
$k''=\dim <e'_1,\ldots ,e'_k>$. The existence of (\ref{diagram})
implies that $k'\geqslant k''$. On the other hand, after applying
an automorphism of $\mathcal{O}^{\oplus k}$, we can assume that
$e'_{k''+1}=\ldots =e_{k}'=0$ and hence deduce the existence of a
subextension of (\ref{extension1}) of the form
\begin{equation}\label{extensionkdos}0\rightarrow \mathcal{O}^{\oplus k''} \rightarrow E''
\rightarrow F' \rightarrow 0
\end{equation}classified by $(e'_1,\ldots ,e'_{k''})$. This is a
BGN extension.

Moreover, bearing in mind (\ref{conditions}) and the fact that $k' \geqslant k''$, one has that $\frac{k}{n-k}\geqslant \frac{k''}{\rank F'}$ and the
extension (\ref{extensionkdos}) satisfies all the conditions
required for (\ref{BGNE''}).
\end{proof}


\section{The codimension of the ``bad'' part}\label{codimension-bad}


We estimate the codimension of the
subvariety of $\oplus^k H^1(F^{\ast})$ whose elements are ``bad'',
in the sense that the coherent systems they induce are not
$\alpha$-stable. We call this subvariety $S$.

\begin{theo}\label{codimension-bad-theorem}Suppose that $F$ has only finitely many subbundles $F'$ with
$\mu (F)= \mu (F')$. Then, the co-dimension of the subvariety $S$
of $H^1(F^{\ast})^{\oplus k}$ satisfies
\begin{equation}\label{dimen-estimates}\codim (S)\geq \min \{ (
(g-1)n'-k'g+d')(k-k') \},
\end{equation}where this minimum is taken over all the invariants
$n'$, $d'$, $k'$ for which $F$ possesses a subbundle $F'$ of type $(n'-k',d')$ satisfying
\begin{equation}\label{conditionsdos}\textnormal{$n-k>n'-k' >0$, $\frac{k}{n}\geq \frac{k'}{n'}$
and $\frac{d}{n-k}=\frac{d'}{n'-k'}$.}
\end{equation}
\end{theo}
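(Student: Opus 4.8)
The plan is to bound the codimension of $S$ by stratifying it according to the type $(n',d',k')$ of the destabilising subsystem and counting parameters in each stratum via the universal-extension machinery of Subsection~\ref{Lange}. By Theorem~\ref{bgn}, a class $e=(e_1,\dots,e_k)\in H^1(F^\ast)^{\oplus k}$ lies in $S$ precisely when there is a BGN subextension of type $(n',d',k')$ fitting in a diagram~(\ref{diagram}) with the numerical constraints~(\ref{conditionsdos}); since by hypothesis $F$ has only finitely many subbundles $F'$ with $\mu(F')=\mu(F)$, and there are finitely many admissible triples $(n',d',k')$, it suffices to bound the codimension of the locus $S_{F'}$ corresponding to a single such $F'$ and a single $k'$, and then take the minimum. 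Fixing $F'\hookrightarrow F$, the inclusion induces the surjection $H^1(F^\ast)\twoheadrightarrow H^1(F'^\ast)$ appearing in the proof of Theorem~\ref{bgn}; a class $e$ admits the required subextension exactly when the images $e'_i\in H^1(F'^\ast)$ span a subspace of dimension $k''\le k'$, i.e.\ when the $k\times\dim H^1(F'^\ast)$ matrix of the $e'_i$ has rank at most $k'$ — but one must be slightly careful, because the genuine condition from Theorem~\ref{bgn} is that the full $k$-tuple $e$ lies in the image of a diagram~(\ref{diagram}), so I would first reduce, exactly as in that proof, to the case where $(\ref{BGNE''})$ is itself a BGN extension classified by $(e'_1,\dots,e'_{k'})$ linearly independent in $H^1(F'^\ast)$.

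With that reduction in hand, the locus of $e$ mapping into such a configuration is, up to the action of $GL(k)$ mixing the $e_i$, governed by the rank-$\le k'$ determinantal condition on the linear map $\mathbb{C}^k\to H^1(F'^\ast)$ sending the standard basis to $(e'_1,\dots,e'_k)$. The space $H^1(F'^\ast)$ has dimension $h^1(F'^\ast)=h^0(F'^\ast)-\chi(F'^\ast)=(g-1)\rank F' + \deg F' = (g-1)(n'-k')+d'$, using $h^0(F'^\ast)=0$ from the semistability of $F'$ and $\deg F'>0$. The generic determinantal variety of $k\times N$ matrices of rank $\le k'$ (with $N=h^1(F'^\ast)$) has codimension $(k-k')(N-k')$ in the space of all such matrices. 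Substituting $N=(g-1)(n'-k')+d'$ gives $N-k'=(g-1)(n'-k')+d'-k'=(g-1)n'-k'g+d'$, which is exactly the quantity $((g-1)n'-k'g+d')(k-k')$ appearing in~(\ref{dimen-estimates}) once multiplied by $(k-k')$. The remaining bookkeeping is to check that passing from the matrix space back to $H^1(F^\ast)^{\oplus k}$, i.e.\ through the surjection $H^1(F^\ast)^{\oplus k}\twoheadrightarrow H^1(F'^\ast)^{\oplus k}$ and then imposing the determinantal condition on the target, does not decrease the codimension: since the preimage of a subvariety of a given codimension under a linear surjection has the same codimension, and the fibre directions ($H^0(\text{stuff})$-type kernels plus the freedom in completing $E'\to E$) only move $e$ within $S_{F'}$, the codimension is preserved, not dropped. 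One should also absorb the finite choice of which $k'$ columns are used, and the $GL(k)$-normalisation used in Theorem~\ref{bgn} to kill $e'_{k''+1},\dots,e'_k$; neither affects codimension.

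The step I expect to be the main obstacle is the last one: relating the honest ``bad'' locus $S$ — defined inside $H^1(F^\ast)^{\oplus k}$ by the existence of a \emph{full} commutative diagram~(\ref{diagram}), not merely by a rank condition on the $e'_i$ — to the clean determinantal variety, and verifying that this identification is an equality (or at least a containment $S\subseteq$ determinantal locus, which is all that is needed for a lower bound on $\codim S$). Concretely, one must argue that if $e$ produces a $k'$-dimensional span of the $e'_i$ with $F'$ the corresponding subbundle, then $e$ genuinely lies in $S$, and conversely that membership in $S$ forces such a rank drop for \emph{some} admissible $(n',d',k',F')$; the forward direction is immediate from Theorem~\ref{bgn}, and the converse is the content of the reduction in that theorem's proof producing the subextension~(\ref{extensionkdos}). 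Assembling these, $S$ is contained in the union over the finitely many admissible $(n',d',k',F')$ of determinantal loci, each of codimension $\ge ((g-1)n'-k'g+d')(k-k')$, and taking the minimum yields~(\ref{dimen-estimates}).
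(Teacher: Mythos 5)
Your proposal is correct and follows essentially the same route as the paper: fix a subbundle $F'$ and an integer $k'$, observe via Theorem \ref{bgn} that the bad locus $S_{F',k'}$ is contained in the pullback under the surjection $H^1(F^{\ast})^{\oplus k}\twoheadrightarrow H^1(F'^{\ast})^{\oplus k}$ of the locus where the $k$-tuple $(e'_1,\dots,e'_k)$ spans at most $k'$ dimensions, compute $h^1(F'^{\ast})=(g-1)(n'-k')+d'$ by Riemann--Roch using $H^0(F'^{\ast})=0$, and take the minimum over the finitely many admissible $(n',d',k')$. The only difference is that you make explicit the determinantal-variety codimension count $(k-k')(h^1(F'^{\ast})-k')$ and the invariance of codimension under the linear surjection, which the paper leaves implicit.
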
\begin{proof}By Theorem \ref{bgn} the BGN extensions that could give us any trouble are
those which possess BGN subextensions of the form (\ref{BGNE''}).
Bearing this in mind, to calculate the codimension we consider the
following picture
\begin{equation}\label{dimension}\xymatrix@R=0.5cm{0 \ar[r] &\mathcal{O}^{\oplus
k'}\ar@{^{(}->}[d]\ar[r]& E' \ar[d]\ar[r]& F' \ar@{=}[d]\ar[r]& 0 \\
0 \ar[r] &\mathcal{O}^{\oplus k}\ar@{=}[d]\ar[r]& E_1
\ar[d]\ar[r]&
F' \ar@{^{(}->}^g [d]\ar[r]& 0 \\
0 \ar[r] &\mathcal{O}^{\oplus k}\ar[r]& E \ar[r]& F \ar[r]& 0}
\end{equation}where the first horizontal extension is a BGN subextension
of our original BGN extension, which is the bottom one. We call
now $(g^{\ast}e_1, \ldots ,g^{\ast}e_k)$ the $k$-tuple image of
the $k$-tuple $(e_1, \ldots ,e_k)$ under the map
$H^1(F^{\ast})\xymatrix@C=0.5cm{\ar@{->>}[r]&}H^1(F'^{\ast})$
induced by the canonical immersion $g$. The existence of the first
horizontal extension tells us precisely that at most $k'$ elements
of $(g^{\ast}e_1, \ldots ,g^{\ast}e_k)$ are linearly independent.
Using Riemann-Roch and bearing in mind that $H^0(F'^{\ast})=0$,
we get
\begin{equation*}h^1(F'^{\ast})=(g-1)(n'-k')+d';
\end{equation*}this identity tells us that the codimension of
the subvariety $S_{F',k'}$ of $\oplus^k H^1(F^{\ast})$, where the
subindex $F'$ refers to the subbundle $F'$ of $F$, satisfies
\begin{align}\label{bad}\codim(S_{F',k'})&\geq \big{(}h^1(F'^{\ast})-k'\big{)}\big{(}k-k'\big{)}=\\&
=\big{(}(g-1)n'-k'g+d'\big{)}\big{(}k-k'\big{)}.\nonumber
\end{align}

So, if we look at all the subbundles $F'$ of $F$ for which $\mu
(F')= \mu (F)$, we see the codimension of the
subvariety of $H^1(F^{\ast})^{\oplus k}$ of ``bad'' elements
satisfies (\ref{dimen-estimates}), where the minimum is taken over
all the invariants $n'$, $d'$, $k'$ satisfying
(\ref{conditionsdos}).
\end{proof}

\begin{remark}{\em Condition (iv) of Definition \ref{BGN}
tells us that $$k\leq h^1(F^{\ast})=d+(n-k)(g-1).$$From this and
(\ref{conditionsdos}) \begin{align*}k'(n-k)\leq k(n'-k')& \leq
d(n'-k')+(n-k)(n'-k')(g-1)=\\& =d'(n-k)+ (n-k)(n'-k')(g-1)=\\&
=(n-k)h^1(F'^{\ast}).\end{align*}So $k'\leq h^1(F'^{\ast})$,
proving that there exist BGN extensions (\ref{BGNE''}) and also
that the lower bound for $\codim S$ is greater than or equal to 0.

Moreover, $k'=h^1(F'^{\ast})$ is only possible if the above
inequalities are equalities. In particular
$\frac{k'}{n'}=\frac{k}{n}$, hence also
$\frac{d'}{n'}=\frac{d}{n}$, and $k=h^1(F^{\ast})$. Writing
$\lambda =k/n$ and $\mu =d/n$, this means that
\begin{equation}\label{Straight}\lambda = 1+ \frac{1}{g}(\mu
-1)\end{equation}(see \cite{BGN} and \cite{Me}). So
(\ref{extension1}) and (\ref{BGNE''}) correspond to the same point
in the Brill--Noether map of \cite{BGN} and this point lies on the
line given by (\ref{Straight}).

Conversely, if the point corresponding to (\ref{extension1}) lies
on the line (\ref{Straight}) and $\gcd(n,d,k)>1$, we can find
$(n',d',k')$ with $\frac{k'}{n'}=\frac{k}{n}$,
$\frac{d'}{n'}=\frac{d}{n}$ and $n'-k' <n-k$.

If this happens and $F$ possesses a subbundle $F'$ with invariants
$(n'-k',d')$, then $h^1(F'^{\ast})=k'$ and the diagram
(\ref{diagram}) exists, proving that the corresponding $(E,V)$ is
not $\alpha$-stable. In this case there are no $\alpha$-stable
$(E,V)$ with quotient $F$.

In all other cases, the general $(E,V)$ is $\alpha$-stable.}
\end{remark}


\section{Some geometry of the spaces that classify the quotients}\label{subsection-epsilon}


In Theorem \ref{bgn} we saw that in order to find out if a
coherent system is not $\alpha$-stable we have to look at the
quotient bundle that appears in its associated BGN extension.
Those coherent systems that fail to be $\alpha$-stable satisfy
that their quotient bundle has subbundles with the same slope as
the quotient bundle and that satisfy the properties described in
the Theorem \ref{bgn}.

For a given vector bundle $F$, all the subbundles of $F$ whose
slope is the same as the slope of $F$, appear in some of the
Jordan--H\"older filtrations of $F$. Bearing this in mind, in this
section we study the sets of all possible Jordan--H\"older
filtrations of a given vector bundle. From these
sets we will define a stratification of $G_L(n,d,k)$.

We give some sort of ``universal'' constructions for these sets of
Jordan--H\"older filtrations, some of them are described as
projective fibrations, others are described in terms of ``local''
and global extensions, following the results and terminology of
Section \ref{Lange}. All these geometrical descriptions will
allow us in the following sections to describe our strata as
complements of determinantal varieties and prove irreducibility
and smoothness conditions for the strata.

First of all we need to introduce some definitions.
\begin{definition}\textnormal{A \emph{Jordan--H\"{o}lder filtration} of
length $r$ of a semistable vector bundle $F$ is a filtration
\begin{equation}\label{j-h-filtration}
0=F_0\subset F_1\subset F_2 \subset ... \subset F_r=F,
\end{equation}such that the quotients $Q_i=F_i/F_{i-1}$ are stable vector bundles
satisfying $\mu(Q_i)=\mu(F)$ for $1\leq i \leq
r$.}\end{definition}

It can be proved that every semistable vector bundle admits a
Jordan--H\"{o}lder filtration and that all the Jordan--H\"{o}lder
filtrations admitted by a given vector bundle have the same
length. However, there is not a canonical Jordan--H\"{o}lder
filtration associated to a semistable vector bundle $F$. Given a Jordan--H\"{o}lder filtration of a vector bundle, we may
associate to it a canonical object. This is described in the
following definition.

\begin{definition}\textnormal{Consider the direct sum of the stable quotients $\grad
(F)=\oplus_i Q_i$. We call $\grad (F)$ the \emph{graded object}
associated to $F$. This object is canonical in the sense that
$\grad (F)$ is determined up to isomorphism by $F$ (and hence
$Q_1$, ..., $Q_r$ are determined up to order).}\end{definition}

In order to construct the stratification we look at the properties
of the graded object associated to a given vector bundle. The main
object we use is the type, its definition is the following.

\begin{definition}\label{type}\textnormal{We call the $r$-tuple $\underline{n}=( n_1 ,\ldots , n_r)=(
\rank (Q_1) ,\ldots , \rank (Q_r))$ the \emph{type} of the
filtration (\ref{j-h-filtration}). We denote by
$\underline{n}(\sigma)$ the type $( n_{\sigma (1)} ,\ldots ,
n_{\sigma (r)})$, where $\sigma \in S_r$, $S_r$ being the group of
permutation of $r$-elements.}\end{definition}

We will use the type later on in this paper to define a
stratification of the moduli space $G_L(n,d,k)$. Note that the
type is not necessarily determined by $F$.

Consider now the Jordan--H\"{o}lder filtrations
\begin{equation}\label{JH}0=F_0\subset F_1 \subset F_2 \subset \ldots \subset F_r=F,
\end{equation}where $F$ is our usual strictly semistable vector
bundle of rank $n-k$ and degree $d$. We begin by giving
definitions and obtaining results for such a Jordan--H\"older
filtration independently of the length $r$. We essentially provide
conditions for such a filtration to be unique. Unfortunately, we
don't have a description of the sets independently of the $r$.
Later on in this section we restrict ourselves to the case $r$
equals $2$ and $3$. In these cases we obtain complete answers and
descriptions which allow us in the following section to obtain a
stratification for $G_L(n,d,k)$ for the cases in which $n>k$ and
$n-k$ equals 2 and 3.

We have the extensions
\begin{equation}\label{ext-a}0\rightarrow F_i
\rightarrow F_{i+1} \rightarrow F_{i+1}/F_i \rightarrow 0,
\end{equation}
and
\begin{equation}\label{ext-c}0\rightarrow F_i/F_{i-1}
\rightarrow F_{i+1}/F_{i-1} \rightarrow F_{i+1}/F_{i} \rightarrow
0
\end{equation}canonically associated to our Jordan--H\"{o}lder
filtration (\ref{JH}). Here we denote $Q_{i}=F_{i}/F_{i-1}$ and
let $\rank (Q_{i})=n_i$ for all $i$.
\begin{definition}\textnormal{We define $\mathcal{G}_{\underline{n}}$ as
the set of Jordan--H\"older filtrations of type $\underline{n}=(
n_1 ,\ldots , n_r)$, such that the extensions (\ref{ext-a})
associated to the filtration are non-split for every $i$, and
$Q_i \ncong Q_j$ for every $i\neq j$.}\end{definition}

\begin{proposition}\label{sequenceof projective}There is a sequence of projective fibrations for
$\mathcal{G}_{\underline{n}}$, let
$$\mathcal{G}_{\underline{n}}\rightarrow \mathcal{G}_{( n_1 ,\ldots ,
n_{r-1})} \rightarrow \ldots \rightarrow \mathcal{G}_{( n_1
,n_2)}\rightarrow \mathcal{M}_1 \times \ldots \times \mathcal{M}_r
\backslash \Delta_r $$where $\mathcal{M}_i$ is the moduli space of
stable vector bundles of rank $n_i$ and degree $d_i$ and $\Delta_r$
is the ``big diagonal'', that is$$\Delta_r :=\{ (Q_1 , \ldots ,Q_r
)\in \mathcal{M}_1 \times \ldots \times \mathcal{M}_r \textnormal{
$ $ such that $Q_i \cong Q_j$ $ $ for some $ $ $i\neq j$}\}.$$In
particular, when $\gcd(n_i,d_i)=1$ for all $i$, $\mathcal{G}_{\underline{n}}$ parametrizes a universal filtration $$0\subset \mathcal{F}_1 \subset \ldots \subset \mathcal{F}_r.$$\end{proposition}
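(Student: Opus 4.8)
The plan is to build the tower of fibrations by peeling off the top quotient $Q_r$ one step at a time, so that the essential content is a single step: given the space $\mathcal{G}_{(n_1,\dots,n_{r-1})}$ parametrizing filtrations $0\subset F_1\subset\dots\subset F_{r-1}$ (with the non-split and pairwise-non-isomorphic conditions), I describe the space $\mathcal{G}_{\underline n}$ of filtrations obtained by adjoining one more stable bundle $Q_r$ at the top. Since $F=F_r$ sits in an extension $0\to F_{r-1}\to F\to Q_r\to 0$ that is required to be non-split, and $Q_r$ must also vary in its own moduli space $\mathcal{M}_r$ avoiding isomorphism with the earlier quotients, the natural claim is that $\mathcal{G}_{\underline n}\to\mathcal{G}_{(n_1,\dots,n_{r-1})}\times(\mathcal{M}_r\setminus\{\text{points }\cong \text{some }Q_i\})$ is a projective fibration with fibre $\mathbb{P}(\operatorname{Ext}^1(Q_r,F_{r-1})^{\vee})$, i.e. $\mathbb{P}H^1(Q_r^{\vee}\otimes F_{r-1})$ once one checks $\operatorname{Hom}(Q_r,F_{r-1})=0$. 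Iterating and taking as base $\mathcal{M}_1\times\dots\times\mathcal{M}_r\setminus\Delta_r$ produces exactly the stated tower.

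**Key steps.** First I would fix the base: over the product $\mathcal{B}=\mathcal{M}_1\times\dots\times\mathcal{M}_r\setminus\Delta_r$ of moduli of stable bundles of the prescribed ranks and degrees, each $\mathcal{M}_i$ carrying (when $\gcd(n_i,d_i)=1$) a universal bundle, pull everything back to $\mathcal{B}\times X$. Second, I would prove the vanishing $\operatorname{Hom}(Q_j,Q_i)=0$ for $i\ne j$ (stability plus equal slopes plus non-isomorphism) and, inductively using the extensions (\ref{ext-a}) and the long exact sequences, deduce $\operatorname{Hom}(Q_{i+1},F_i)=0$; this is what makes the relative $\mathscr{E}xt^0$ vanish and the relative $\mathscr{E}xt^1$ locally free of the expected rank $h^1(Q_{i+1}^\vee\otimes F_i)=(g-1)n_{i+1}n_i+\dots$ (computed by Riemann--Roch), so that $\mathscr{E}xt^1$ commutes with base change. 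Third, I would invoke Proposition \ref{ext-projective} (the projective universal family of non-split extensions) at each stage: over $\mathcal{G}_{(n_1,\dots,n_i)}$ one has the universal $\mathcal{F}_i$, one forms the relative $\mathscr{E}xt^1$ of the (pulled-back) universal $Q_{i+1}$ by $\mathcal{F}_i$, and its projectivization $P=\mathbb{P}(\mathscr{E}xt^1{}^\vee)$ carries a universal non-split extension $0\to \mathcal{F}_i\to\mathcal{F}_{i+1}\to Q_{i+1}\otimes\mathcal{O}_P(1)\to 0$; this $P$ is by definition $\mathcal{G}_{(n_1,\dots,n_{i+1})}$ and the projection is the claimed projective fibration. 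Fourth, after $r-1$ such steps, the composite $\mathcal{G}_{\underline n}\to\mathcal{B}$ is the asserted tower, and the universal filtration $0\subset\mathcal{F}_1\subset\dots\subset\mathcal{F}_r$ on $\mathcal{G}_{\underline n}\times X$ is assembled from the universal extensions at the successive stages (absorbing the $\mathcal{O}(1)$ twists into the choice of universal bundles, legitimate exactly because each $\mathcal{M}_i$ has a genuine universal family in the coprime case).

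**Main obstacle.** The delicate point is the base-change/local-freeness bookkeeping needed to apply Proposition \ref{ext-projective} at every stage: one must verify that $\mathscr{E}xt^i_f$ commutes with base change for $i=0,1$, which hinges on the Hom-vanishing $\operatorname{Hom}(Q_{i+1},F_i)=0$ holding at \emph{every} point of the (reduced, noetherian) parameter space, uniformly, so that $h^1(Q_{i+1}^\vee\otimes F_i)$ is constant and $\varphi^1(y)$ is an isomorphism everywhere. Establishing this vanishing is where the hypotheses $Q_i\ncong Q_j$ and ``all extensions (\ref{ext-a}) non-split'' are genuinely used: non-isomorphism kills $\operatorname{Hom}(Q_{i+1},Q_j)$ for $j\le i$, and an induction on the filtration length propagates this to $\operatorname{Hom}(Q_{i+1},F_i)$. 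A secondary, more bureaucratic point is the passage from the twisted universal extension in Proposition \ref{ext-projective} (where $\mathscr{G}$ is twisted by $\mathcal{O}_P(1)$) to an honest filtration with sub-quotient $Q_{i+1}$ on the nose; this is handled by twisting the universal bundle on $\mathcal{M}_{i+1}$, which is available precisely under the coprimality assumption invoked in the final sentence of the proposition. With these in hand, irreducibility and smoothness of $\mathcal{G}_{\underline n}$ are immediate consequences of the tower, each stage being a projective bundle over an irreducible smooth base (the $\mathcal{M}_i$ are irreducible and smooth, and $\mathcal{B}$ is an open subset of their product).
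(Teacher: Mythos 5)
Your argument reproduces the paper's strategy faithfully in the coprime case, but it has a gap for the general statement: the tower of projective fibrations is asserted for arbitrary $(n_i,d_i)$, whereas your construction of the relative $\mathscr{E}xt^1$ sheaves requires universal (Poincar\'e) bundles over the moduli spaces $\mathcal{M}_i$, and these do not exist when $\gcd(n_i,d_i)\neq 1$ (see \cite{R}). You acknowledge coprimality only as the hypothesis needed to absorb the $\mathcal{O}_P(1)$ twist and produce an honest universal filtration, but in fact the very first step of your construction --- pulling back universal bundles to $\mathcal{B}\times X$ and forming the relative $\mathscr{E}xt^1$ of one by another --- is unavailable over $\mathcal{M}_1\times \ldots \times \mathcal{M}_r$ in the non-coprime case.

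The paper circumvents this by performing the entire construction equivariantly on $R_1^s\times \ldots \times R_r^s\setminus (f_1^s\times \ldots \times f_r^s)^{-1}\Delta_r$, where the $R_i^s$ are the loci of stable points in suitable Quot schemes and genuine universal bundles $\mathcal{U}_i^s$ always exist. The same Hom-vanishing you identify (propagated along the filtration) shows the relative $\mathscr{E}xt^1$ is locally free there; one then projectivizes, observes that the centres of the groups $GL(N_i)$ act trivially on the projectivization so that $PGL(N_1)\times \ldots \times PGL(N_r)$ acts freely, and applies Kempf's descent lemma to push the projective bundles down to $\mathcal{M}_1\times \ldots \times \mathcal{M}_r\setminus \Delta_r$. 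This descent step is the essential extra ingredient your proposal is missing; the remainder of your argument (the inductive peeling of $Q_r$, the vanishing of $\textnormal{Hom}(Q_{i+1},F_i)$, the appeal to the universal-extension results of Subsection \ref{Lange}, and the final coprime-case upgrade to a universal filtration) matches the paper's proof.
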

\begin{proof}We use induction. Let $\widetilde{\mathcal{M}}_i=\widetilde{\mathcal{M}}(n_i,d_i)$ and $\mathcal{M}_i=\mathcal{M}(n_i,d_i)$ be the moduli spaces of (semi)stable vector bundles of rank $n_i$ and degree $d_i$ respectively. The construction depends on the existence of Poincar\'e bundles. In \cite{R} it is proved that when $\gcd (n_i,d_i)\neq 1$ Poincar\'e bundles do not exist over $\mathcal{M}_i$. Then, we need to work at the Quot squeme level. It is well-known that $\widetilde{\mathcal{M}}_i$ can be represented as a GIT quotient of an open subset of a certain Quot scheme, we denote it by $\mathcal{Q}_i$, by the action of an algebraic group (see \cite{N3} Chapter 5 for more details). Let
$R_i^{ss}$ be the open set of $\mathcal{Q}_i$ of semistable points.
If $f_i$ is the morphism from $R_i^{ss}$ to
$\widetilde{\mathcal{M}}_i$, we have that
$(\widetilde{\mathcal{M}}_i,f_i)$ is a good quotient of
$R_i^{ss}$. Let $R^s_i=f_i^{-1}(\mathcal{M}_i)$ and
$f^s_i:R^s_i \rightarrow \mathcal{M}_i$ the restriction of $f_i$.
In this situation, there exist universal bundles
$\mathcal{U}^{ss}_i$ on $R^{ss}_i\times X$. Let $\mathcal{U}^s_i$
be its restriction to $R^s_i \times X$. The group
$GL(N_i)$ acts on $R^s_i$, with the centre acting trivially and
such that $PGL(N_i)$ acts freely. The quotient of $R^s_i$ by
$PGL(N_i)$ is the moduli space of stable bundles $\mathcal{M}_i$.

We do the construction over $$R_1^s\times \ldots \times R_r^s \setminus (f^s_1\times \ldots \times f^s_r)^{-1}\Delta_r.$$The base case is $r=2$. Let $q^s_2:R^s_1\times R^s_2\times X
\rightarrow R^s_1\times R^s_2$ and
$p^s_i:R^s_1\times R^s_2 \rightarrow R^s_i$
for $i=1$, $2$, be the projections. And let $\mathcal{H}^s_2$ be the sheaf
\begin{equation}\label{sheaf_H}\mathcal{R}^1(q^s_2)_{\ast}(\mathscr{H}om((p^s_2\times
id_X)^{\ast}\mathcal{U}^s_2,(p^s_1\times id_X)^{\ast}\mathcal{U}^s_1)),
\end{equation}where $\mathscr{H}om$ is the sheaf of
homomorphisms. Note that Hom$(\mathcal{U}^s_2|_{\{m_2\}\times X},\mathcal{U}^s_1|_{\{m_1\}\times X})=0$ since both are stable bundles of the same slope, then $h^1((\mathcal{U}^s_2|_{\{m_2\}\times X})^{\ast}\otimes (\mathcal{U}^s_1|_{\{m_1\}\times X}))$ is
independent of the choice of the point $(m_1,m_2)\in R^s_1
\times R^s_2\setminus (f^s_1\times f^s_2)^{-1}\Delta_2$. Hence $\mathcal{H}^s_2$ is a bundle on
$R^s_1\times R^s_2\setminus (f^s_1\times f^s_2)^{-1}\Delta_2$.  We consider the projectivization of $\mathcal{H}^s_2$,
$\mathbb{P}(\mathcal{H}^s_2)$. The centre of $GL(N_1)\times GL(N_2)$ acts trivially
on the projective bundle associated to $\mathcal{H}^s_2$ and so
$PGL(N_1)\times PGL(N_2)$ acts freely on
$\mathbb{P}(\mathcal{H}^s_2)$. Using Kempf's descent Lemma (see
\cite{LeP2} page 138, \cite{DN} Theorem 2.3) we obtain that
$\mathbb{P}(\mathcal{H}^s_2)/PGL(N_1)\times PGL(N_2)$ is a projective
fibration over $\mathcal{M}_1 \times \mathcal{M}_2\setminus \Delta_2$ that satisfies
the properties of the proposition. This projective fibration is identified to $\mathcal{G}_{(n_1,n_2)}$. Moreover, let $\mathcal{O}_{P_2}(1)$ be the
tautological bundle of the projective bundle
$\mathbb{P}(\mathcal{H}^s_2)$, $\pi^s_{P_2}:\mathbb{P}(\mathcal{H}^s_2)
\rightarrow \mathcal{U}^s_1\times \mathcal{U}^s_2$ and let
$p_{\mathbb{P}(\mathcal{H}^s_2)}:\mathbb{P}(\mathcal{H}_2^s) \times X
\rightarrow \mathbb{P}(\mathcal{H}_2^s)$ be the projection. We are
in the hypotheses of Remark \ref{remark}, so there exists a
vector bundle $\mathcal{F}^s_2$ over
$\mathbb{P}(\mathcal{H}^s_2)\times X$ and an exact sequence
\begin{equation}\label{ex-dual}0\rightarrow (\pi^s_P \times id_X)^{\ast}(p^s_1 \times
id_X)^{\ast} \mathcal{U}^s_1 \otimes
p_{\mathbb{P}(\mathcal{H}_2^s)}^{\ast}\mathcal{O}_{P_2}(1)\rightarrow
\mathcal{F}^s_2 \rightarrow (\pi^s_P \times id_X)^{\ast}(p^s_2
\times id_X)^{\ast} \mathcal{U}^s_2 \rightarrow 0,
\end{equation}which is universal in the sense of the projective version of Proposition
\ref{Ext-NR}.

In the inductive step we assume that there exists a sequence of projective fibrations $$\mathcal{G}'_{( n_1 ,\ldots ,
n_{r-1})} \rightarrow \ldots \rightarrow \mathcal{G}'_{( n_1
,n_2)}\rightarrow R^s_1 \times \ldots \times R^s_{r-1}
\backslash (f_1^s\times \ldots \times f_{r-1}^s)^{-1}\Delta_{r-1}, $$and a universal family $$0=\mathcal{F}_0^s\subset \mathcal{F}_1^s\subset \ldots \subset \mathcal{F}_{r-1}^s$$parametrized by $\mathcal{G}'_{( n_1 ,\ldots ,
n_{r-1})}$. Note that $\mathcal{F}_1^s$ equals $\mathcal{U}_1^s$. The group $PGL(N_1)\times \ldots \times PGL(N_{r-1})$ acts freely on $\mathcal{G}'_{( n_1 ,\ldots ,
n_{r-1})}$ in such a way that there exists a quotient sequence$$ \mathcal{G}_{( n_1 ,\ldots ,
n_{r-1})} \rightarrow \ldots \rightarrow \mathcal{G}_{( n_1
,n_2)}\rightarrow \mathcal{M}_1 \times \ldots \times \mathcal{M}_{r-1}
\backslash \Delta_{r-1} .$$Let us show that this is also true for $\underline{n}$. By the inductive step we have constructed a sheaf $\mathcal{H}_{r-1}^s$ as
(\ref{sheaf_H}), over $\mathbb{P}(\mathcal{H}_{r-2}^s)\times R^s_{r-1}$. Note
that $\mathcal{H}^s_{r-1}$ is actually a bundle on $\mathbb{P}(\mathcal{H}_{r-2}^s)\times R^s_{r-1}$. We consider the projectivization of $\mathcal{H}^s_{r-1}$,
$\mathbb{P}(\mathcal{H}^s_{r-1})$. One has that $PGL(N_1)\times \ldots \times PGL(N_{r-1})$ acts trivially on $\mathbb{P}(\mathcal{H}^s_{r-1})$, and then it acts trivially on $\mathcal{G}'_{(n_1,\ldots ,n_{r-1})}$, which implies the existence of a sequence of projective fibrations on the quotient. Let $\mathcal{O}_{P_{r-1}}(1)$ be the
tautological bundle of the projective bundle
$\mathbb{P}(\mathcal{H}^s_{r-1})$. Let
$\pi_{P}:\mathbb{P}(\mathcal{H}^s_{r-1}) \rightarrow \mathbb{P}(\mathcal{H}^s_{r-2})\times
R^s_{r-1}$ and let
$p^s_{\mathbb{P}(\mathcal{H}^s_{r-1})}:\mathbb{P}(\mathcal{H}^s_{r-1}) \times X
\rightarrow \mathbb{P}(\mathcal{H}^s_{r-1})$ be the projection. For the existence of $\mathcal{F}_{r-1}^s$ it is required that for every point $(m,m')\in \mathbb{P}(\mathcal{H}^s_{r-2})\times
R^s_{r-1}$ we have that Hom$(\mathcal{U}^s_{r-1}|_{\{m'\}\times X},\mathcal{F}^s_{r-2}|_{\{m\}\times X})=0$ (see Remark \ref{remark}).

Consider now the projections $q^s_r:\mathbb{P}(\mathcal{H}_{r-1}^s)\times
R_r^s\times X \rightarrow \mathbb{P}(\mathcal{H}_{r-1}^s)\times
R_r^s$, $p:\mathbb{P}(\mathcal{H}_{r-1}^s)\times R_r^s
\rightarrow \mathbb{P}(\mathcal{H}_{r-1}^s)$ and
$p^s_r:\mathbb{P}(\mathcal{H}_{r-1}^s)\times R_r^s \rightarrow
R_r^s$. And let $\mathcal{H}_r^s$ be the sheaf
\begin{equation*}\mathcal{R}^1(q^s_r)_{\ast}(\mathscr{H}om((p^s_r\times
id_X)^{\ast}\mathcal{U}^s_r,(p\times id_X)^{\ast}\mathcal{F}^s_{r-1})),
\end{equation*}where $\mathscr{H}om$ is the sheaf of
homomorphisms. Since Hom$(\mathcal{U}^s_r|_{\{ m_3 \} \times X},\mathcal{F}^s_{r-1}|_{\{ m_4 \}
\times X})=0$ for all $(m_4, m_3 )\in
\mathbb{P}(\mathcal{H}^s_{r-1})\times R_r^s$, one has that $\mathcal{H}^s_r$ is a bundle on
$\mathbb{P}(\mathcal{H}^s_{r-1})\times R_r^s$ . Note that if there were a non-zero morphism from $\mathcal{U}^s_r|_{\{ m_3 \} \times X}$ to $\mathcal{F}^s_{r-1}|_{\{ m_4 \}
\times X}$, then one could find a non-zero morphism from $\mathcal{U}_r^s|_{\{ m_3\}\times X}$ to $\mathcal{U}_{r-1}^s|_{\{ m''\}\times X}$ for some $m''\in R_{r-1}^s$, but this is not possible because these are non-isomorphic stable bundles of the same slope. We consider the
projectivization of $\mathcal{H}_r^s$, $\mathbb{P}(\mathcal{H}_r^s)$. One easily see that $PGL(N_1)\times \ldots \times PGL(N_{r})$ acts trivially on $\mathbb{P}(\mathcal{H}^s_{r})$, and then it acts trivially on $\mathcal{G}'_{\underline{n}}$, which implies the existence of the required sequence of projective fibrations on the quotient. Moreover,
let $\mathcal{O}_{P_r}(1)$ be the tautological bundle of the
projective bundle $\mathbb{P}(\mathcal{H}_r^s)$.  Let $\pi_{P_r}:\mathbb{P}(\mathcal{H}_r^s) \rightarrow
\mathbb{P}(\mathcal{H}_{r-1}^s)\times R_r^s$ and let
$p_{\mathbb{P}(\mathcal{H}_r^s)}:\mathbb{P}(\mathcal{H}_r^s) \times X
\rightarrow \mathbb{P}(\mathcal{H}_r^s)$ be the projection.
Then we are in the hypotheses of Remark \ref{remark}, so there
exists a vector bundle $\mathcal{F}_r^s$ over
$\mathbb{P}(\mathcal{H}_r^s)\times X$ and an exact sequence
\begin{equation*}0\rightarrow (\pi_{P_r} \times id_X)^{\ast}(p \times
id_X)^{\ast} \mathcal{F}_{r-1}^s \otimes
p_{\mathbb{P}(\mathcal{H}_r^s)}^{\ast}\mathcal{O}_{P_r}(1)\rightarrow
\mathcal{F}_r^s \rightarrow (\pi_{P_r} \times id_X)^{\ast}(p_r^s \times
id_X)^{\ast} \mathcal{U}_r^s \rightarrow 0,
\end{equation*}which is universal in the sense of the projective version of Proposition
\ref{Ext-NR}.

Finally, when $\gcd (n_i,d_i)=1$ for all $i$ one has that there exists Poincar\'e bundles $\mathcal{P}_i$ over $\mathcal{M}_i$. Then we can repeat the previous argument at the moduli space level obtaining a universal filtration $$0\subset \mathcal{F}_1 \subset \ldots \subset \mathcal{F}_r$$parametrized by $\mathcal{G}_{\underline{n}}$.\end{proof}

\begin{remark}\textnormal{The proof of the previous proposition shows that there always exists a universal filtration at the Quot-scheme level.}\end{remark}

\begin{proposition}\label{JHProposition}The Jordan--H\"older
filtration
\begin{equation}\label{JHHJ}0\subset F_1 \subset F_2 \subset \ldots \subset F_r=F,
\end{equation}of $F$ is unique if and only if no sequence
\begin{equation}\label{split_comment}0\rightarrow Q_i
\rightarrow F_{i+1}/F_{i-1} \rightarrow Q_{i+1} \rightarrow 0
\end{equation}for $0<i\leqslant r-1$, splits. If no two $Q_i$
are isomorphic, this is equivalent to saying that
\begin{equation}Hom (Q_{i+1},F_{i+1}/F_{i-1})=0\end{equation}for $0<i\leqslant
r-1$. \end{proposition}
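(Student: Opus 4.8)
The plan is to establish the two implications of the equivalence separately and then to deduce the final reformulation in terms of the vanishing of $Hom$. For the implication ``if some sequence (\ref{split_comment}) splits then the filtration is not unique'', I would exhibit a second Jordan--H\"older filtration explicitly. Suppose (\ref{split_comment}) splits for some $i$ with $0<i\leqslant r-1$; a splitting yields a subbundle $\overline{F}\subset F_{i+1}/F_{i-1}$ with $\overline{F}\cong Q_{i+1}$ and $F_{i+1}/F_{i-1}=(F_i/F_{i-1})\oplus\overline{F}$. Letting $F_i'\subset F_{i+1}$ be the preimage of $\overline{F}$ under $F_{i+1}\twoheadrightarrow F_{i+1}/F_{i-1}$, one has $F_{i-1}\subset F_i'\subset F_{i+1}$, while $F_i'/F_{i-1}\cong Q_{i+1}$ and $F_{i+1}/F_i'\cong Q_i$ are stable of slope $\mu(F)$, so that
$$0\subset F_1\subset\cdots\subset F_{i-1}\subset F_i'\subset F_{i+1}\subset\cdots\subset F_r=F$$
is again a Jordan--H\"older filtration; moreover $F_i'\neq F_i$ because $\overline{F}$ and $F_i/F_{i-1}$ are complementary \emph{nonzero} subbundles of $F_{i+1}/F_{i-1}$, hence distinct. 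This uses nothing about the $Q_i$ being pairwise non-isomorphic.

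For the converse, ``if no sequence (\ref{split_comment}) splits then the filtration is unique'', I would induct on the length $r$, the case $r=1$ being immediate since $F$ is then stable. Given a second Jordan--H\"older filtration $0\subset F_1'\subset\cdots\subset F_r'=F$ (necessarily also of length $r$), the crux is to show $F_1'=F_1$. Granting this, the filtrations induced by $F_\bullet$ and $F_\bullet'$ on $F/F_1$ are Jordan--H\"older filtrations of length $r-1$, and the sequences (\ref{split_comment}) attached to the induced filtration of $F/F_1$ are exactly the sequences (\ref{split_comment}) for $F$ with index $2\leqslant i\leqslant r-1$, so none of them splits; the inductive hypothesis then forces the two induced filtrations to coincide, whence $F_j'=F_j$ for all $j$.

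To prove $F_1'=F_1$ I would track $F_1'$ through the successive quotients of the given filtration. For $0\leqslant j\leqslant r$ set $\psi_j\colon F_1'\hookrightarrow F\twoheadrightarrow F/F_j$; since $\psi_0$ is injective and $\psi_r=0$ there is a largest index $j_0$ with $\psi_{j_0}\neq 0$, and then $\im\psi_{j_0}\subseteq F_{j_0+1}/F_{j_0}=Q_{j_0+1}$. Using the standard facts that on a curve a nonzero homomorphism from a stable bundle to a semistable bundle of the same slope is injective, a nonzero homomorphism from a semistable bundle to a stable bundle of the same slope has full-rank image, and hence a nonzero homomorphism between stable bundles of equal slope is an isomorphism, one gets that $\psi_{j_0}$ induces an isomorphism $F_1'\cong Q_{j_0+1}$. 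If $j_0=0$, then $\psi_0$ is the inclusion $F_1'\hookrightarrow F$, so this says precisely $F_1'=F_1$ (recall $Q_1=F_1$). If $j_0\geqslant 1$, then $\im\psi_{j_0-1}$ lies in $F_{j_0+1}/F_{j_0-1}$ and the projection $F_{j_0+1}/F_{j_0-1}\twoheadrightarrow Q_{j_0+1}$ carries $\psi_{j_0-1}$ to $\psi_{j_0}$; consequently $\psi_{j_0-1}\circ\psi_{j_0}^{-1}\colon Q_{j_0+1}\to F_{j_0+1}/F_{j_0-1}$ is a section of the sequence (\ref{split_comment}) with $i=j_0$, contradicting the hypothesis. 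So $j_0=0$ and $F_1'=F_1$. I expect this index-chasing step --- locating where $F_1'$ ``sits'' with respect to the given filtration and extracting a splitting from it --- to be the main obstacle; the remaining computations are routine slope bookkeeping.

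Finally, assuming no two $Q_i$ are isomorphic, I would check that (\ref{split_comment}) splits if and only if $Hom(Q_{i+1},F_{i+1}/F_{i-1})\neq 0$. A section is a nonzero such homomorphism, giving one direction. Conversely, if $\phi\colon Q_{i+1}\to F_{i+1}/F_{i-1}$ is nonzero, composing it with the projection onto $Q_{i+1}$ gives an endomorphism of the stable bundle $Q_{i+1}$, hence a scalar; this scalar is nonzero, for otherwise $\phi$ would factor through a nonzero --- hence, being a homomorphism of stable bundles of equal slope, an isomorphism --- map $Q_{i+1}\to Q_i$, contradicting $Q_i\ncong Q_{i+1}$. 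Rescaling $\phi$ then produces a section, so (\ref{split_comment}) splits. Thus, under the non-isomorphism hypothesis, ``no sequence (\ref{split_comment}) splits'' is equivalent to ``$Hom(Q_{i+1},F_{i+1}/F_{i-1})=0$ for all $0<i\leqslant r-1$''.
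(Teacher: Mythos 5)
Your proof is correct; both directions and the final $Hom$ reformulation go through. It differs from the paper's argument in two respects, both in the direction of streamlining. For ``some sequence splits $\Rightarrow$ the filtration is not unique'', the paper produces the competing term $F_i'$ by tensoring $0\to F_{i-1}\to F_i\to Q_i\to 0$ with $Q_{i+1}^{\ast}$, passing to the long exact sequence in cohomology and lifting the (vanishing) extension class of (\ref{split_comment}); your construction of $F_i'$ as the preimage in $F_{i+1}$ of the complementary summand $\overline{F}\cong Q_{i+1}$ is the same object obtained without cohomology, and your observation that $F_i'\neq F_i$ because the two summands are complementary and nonzero is cleaner than the paper's diagram. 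For the converse, the paper compares two filtrations directly at the first index $i+1$ where they diverge, shows $F'_{i+1}\nsubseteq F_{i+1}$, locates the unique $j\geq i+2$ with $F'_{i+1}\subset F_j$ but $F'_{i+1}\nsubseteq F_{j-1}$, and extracts a splitting of the sequence at index $j-1$; you instead induct on the length $r$, which reduces everything to the single comparison $F_1'=F_1$, and your index $j_0$ (the last quotient $F/F_j$ to which $F_1'$ maps nontrivially) plays exactly the role of the paper's $j$. The two arguments are the same mechanism --- follow the ``wrong'' subbundle until it lands isomorphically in some graded piece $Q_j$ and read off a splitting --- packaged differently: your induction buys cleaner bookkeeping, since only the first step of either filtration ever has to be compared, at the cost of checking that the sequences (\ref{split_comment}) for $F/F_1$ are those of $F$ with shifted index, which you do. The $Hom$ equivalence at the end is handled identically in substance; the paper states it in one line, and you supply the scalar/projection argument it implicitly uses.
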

\begin{proof}For the first statement, suppose that the
Jordan--H\"older filtration (\ref{JHHJ}) is not unique, then if we
have two Jordan--H\"older filtrations for $F$ there exists an
index $i+1<r$ such that $F_j =F'_j$ for all $j<i+1$ and
$F_{i+1}\neq F'_{i+1}$.

If
$F'_{i+1}\subsetneqq F_{i+1}$, there is a non-zero morphism
of vector bundles $\psi :F'_{i+1}/F_i\rightarrow F_{i+1}/F_i$.
Since $F'_{i+1}/F_i$ and $F_{i+1}/F_i$ are stable bundles having
the same slope and $\psi \neq 0$, we get that $F'_{i+1}/F_i \cong
F_{i+1}/F_i$. Hence $F'_{i+1}$ and $F_{i+1}$ have the same rank, so $F'_{i+1}=F_{i+1}$, which is a contradiction. Then $F'_{i+1}\nsubseteq F_{i+1}$. It follows that there exists a unique $j \geqslant i+2$ such that $F'_{i+1}\subset F_j$ but $F'_{i+1}\nsubseteq F_{j-1}$.  This implies that there is a
non-zero bundle morphism $F'_{i+1}\rightarrow F_j/F_{j-1}$, which induces
$\varphi :F'_{i+1}/F_i \rightarrow F_j/F_{j-1}=Q_j$.
Since $\varphi \neq 0$ and $F'_{i+1}/F_i$ and $F_j/F_{j-1}=Q_j$ are
stable bundles having the same slope, then $\varphi$ is an
isomorphism.

The bundle $F_j/F_i$ is
the middle term of the following exact sequence
\begin{equation}\label{split_comment2}0\rightarrow F_{j-1}/F_i
\rightarrow F_{j}/F_{i} \rightarrow Q_{j} \rightarrow 0.
\end{equation}We have that $F'_{i+1}/F_i$ is a subbundle of $F_j/F_i$ which is
isomorphic to $Q_j$, which is stable. One then has that
the sequence (\ref{split_comment2}) splits. It follows that\begin{equation}0\rightarrow Q_{j-1}
\rightarrow F_{j}/F_{j-2} \rightarrow Q_{j} \rightarrow 0
\end{equation}splits.

Suppose now that for some $1\leqslant i\leqslant r$ the sequence
\begin{equation}\label{a}0\rightarrow Q_{i}
\rightarrow F_{i+1}/F_{i-1} \rightarrow Q_{i+1} \rightarrow 0
\end{equation}splits, that is $F_{i+1}/F_{i-1} \cong Q_i \oplus
Q_{i+1}$. We have a Jordan--H\"older filtration of $F$
\begin{equation}\label{JHHJa}0\subset F_1 \subset F_2 \subset \ldots
\subset F_{i-1}\subset F_i \subset F_{i+1}\subset \ldots \subset
F_r=F,
\end{equation}then we can consider the exact sequence
\begin{equation}0\rightarrow F_{i-1} \rightarrow F_i \rightarrow
Q_i \rightarrow 0.
\end{equation}If we take the tensor product by $Q_{i+1}^{\ast}$
and then cohomology, we get the following exact
sequence\begin{equation} H^1(Q_{i+1}^{\ast}\otimes
F_{i-1} )\rightarrow H^1(Q_{i+1}^{\ast}\otimes F_i) \rightarrow
H^1(Q_{i+1}^{\ast}\otimes Q_i) \rightarrow 0.
\end{equation}The fact that (\ref{a}) is split implies that its
extension class in $H^1(Q_{i+1}^{\ast}\otimes Q_i)$ is zero. From
the exactness of the previous sequence, there is an extension \begin{equation}\label{a1}0\rightarrow F_{i-1} \rightarrow F'_i
\rightarrow Q_{i+1} \rightarrow 0
\end{equation}from which the canonical extension \begin{equation*}0\rightarrow F_{i} \rightarrow F_{i+1}
\rightarrow Q_{i+1} \rightarrow 0
\end{equation*}is induced. There is also a commutative diagram
\begin{equation}\xymatrix@R=0.5cm{& 0
 & 0 \\
& Q_i\ar[u]\ar@{=}[r] & Q_i\ar[u] \\
0 \ar[r] &F_i\ar[u]\ar[r]& F_{i+1} \ar[u]\ar[r]& Q_{i+1} \ar[r]& 0 \\
0 \ar[r] &F_{i-1} \ar[u]\ar[r]& F'_{i} \ar[u]\ar[r]&Q_{i+1} \ar@{=}[u]\ar[r]& 0\\
& 0 \ar[u] & 0 \ar[u]}
\end{equation}From this, one gets two different Jordan--H\"older
filtrations of $F$
\begin{equation}\label{JHHJa}0\subset F_1 \subset F_2 \subset \ldots
\subset F_{i-1}\subset F_i \subset F_{i+1}\subset \ldots \subset
F_r=F,
\end{equation}
and
\begin{equation}\label{JHHJb}0\subset F_1 \subset F_2 \subset \ldots
\subset F_{i-1}\subset F'_i \subset F_{i+1}\subset \ldots \subset
F_r=F.
\end{equation}This concludes the proof of the first statement.

For the second statement, if a sequence (\ref{split_comment}) splits and no two $Q_i$
are isomorphic, then the condition Hom$(Q_{i+1},F_{i+1}/F_{i-1})=0$ fails. Conversely, if no two $Q_i$
are isomorphic and there is a non-zero bundle morphism $Q_{i+1}\rightarrow F_{i+1}/F_{i-1}$, then (\ref{split_comment}) splits.
\end{proof}

We introduce now the following subset of
$\mathcal{G}_{\underline{n}}$.
\begin{definition}\label{JHdefinition}\textnormal{We define $\mathcal{E}_{\underline{n}}$ as
the set of bundles which admit Jordan--H\"older filtrations in
$\mathcal{G}_{\underline{n}}$ satisfying
\begin{equation}\label{goodcondition}Hom(Q_{i+1}, F_{i+1}/F_{i-1})=0\end{equation}for every
$i$. Note that $\mathcal{E}_{(n_1,n_2)}\cong
\mathcal{G}_{(n_1,n_2)}$.}\end{definition}

\begin{proposition}$\mathcal{E}_{\underline{n}}$ has a natural
structure of quasi-projective variety.
\end{proposition}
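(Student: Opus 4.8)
The plan is to identify $\mathcal{E}_{\underline{n}}$ with an open subvariety of $\mathcal{G}_{\underline{n}}$, which is quasi-projective since it is a tower of projective fibrations over the quasi-projective base $\mathcal{M}_1\times\cdots\times\mathcal{M}_r\setminus\Delta_r$ by Proposition \ref{sequenceof projective}. First I would pass to a parameter space carrying a universal filtration: when $\gcd(n_i,d_i)=1$ for all $i$ this is $\mathcal{G}_{\underline{n}}$ itself, and in general it is the $PGL(N_1)\times\cdots\times PGL(N_r)$-cover $\mathcal{G}'_{\underline{n}}$ of $\mathcal{G}_{\underline{n}}$ constructed in the proof of Proposition \ref{sequenceof projective}, which carries a universal filtration $0=\mathcal{F}_0\subset\mathcal{F}_1\subset\cdots\subset\mathcal{F}_r$ by the remark following that proposition. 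Denote this space by $Z$.

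From the universal filtration one forms, for each $0<i\leqslant r-1$, the locally free sheaves $\mathcal{Q}_{i+1}=\mathcal{F}_{i+1}/\mathcal{F}_i$ and $\mathcal{F}_{i+1}/\mathcal{F}_{i-1}$ on $Z\times X$, both flat over $Z$, together with the locally free sheaf $\mathscr{H}om(\mathcal{Q}_{i+1},\mathcal{F}_{i+1}/\mathcal{F}_{i-1})$. Writing $p:Z\times X\to Z$ for the projection, the semicontinuity theorem (\cite[III.12]{H}) applied to $p_{\ast}\mathscr{H}om(\mathcal{Q}_{i+1},\mathcal{F}_{i+1}/\mathcal{F}_{i-1})$ shows that $z\mapsto\dim\operatorname{Hom}(Q_{i+1},F_{i+1}/F_{i-1})$ is upper semicontinuous on $Z$; hence the locus $W_i\subseteq Z$ on which this dimension vanishes is open. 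Then $W:=\bigcap_{i=1}^{r-1}W_i$ is open in $Z$, and by construction it parametrizes exactly the filtrations in $\mathcal{G}_{\underline{n}}$ (resp. their Quot-scheme lifts) satisfying condition (\ref{goodcondition}); note also that (\ref{goodcondition}) forces each extension (\ref{ext-a}) to be non-split, so these filtrations do lie in $\mathcal{G}_{\underline{n}}$.

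When $Z=\mathcal{G}'_{\underline{n}}$, condition (\ref{goodcondition}) depends only on the isomorphism classes of the bundles in the filtration and not on their Quot-scheme representatives, so $W$ is invariant under the free $PGL(N_1)\times\cdots\times PGL(N_r)$-action; as the quotient map is a principal bundle, $W$ descends to an open subvariety $\overline{W}\subseteq\mathcal{G}_{\underline{n}}$ (in the case with Poincar\'e bundles simply take $\overline{W}=W$). Finally, since no two $Q_i$ are isomorphic for filtrations in $\mathcal{G}_{\underline{n}}$, Proposition \ref{JHProposition} tells us that (\ref{goodcondition}) is equivalent to the Jordan--H\"older filtration being unique; hence the map sending a point of $\overline{W}$ to the total bundle $F$ of the corresponding filtration is injective, and by Definition \ref{JHdefinition} its image is precisely $\mathcal{E}_{\underline{n}}$. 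Transporting the variety structure of $\overline{W}$ along this bijection endows $\mathcal{E}_{\underline{n}}$ with the structure of a quasi-projective variety, and this structure is natural since it is induced from the intrinsically defined $\mathcal{G}_{\underline{n}}$ (for $r=2$ one recovers $\mathcal{E}_{(n_1,n_2)}\cong\mathcal{G}_{(n_1,n_2)}$).

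The main obstacle is the case $\gcd(n_i,d_i)\neq1$, where no Poincar\'e bundle exists on some $\mathcal{M}_i$ and there is no universal filtration directly on $\mathcal{G}_{\underline{n}}$: one must run the semicontinuity argument on the Quot-scheme-level cover $\mathcal{G}'_{\underline{n}}$ and then verify carefully that $W$ is $PGL$-invariant and that openness descends to the quotient. The only other point demanding care is the set-theoretic identification of $\overline{W}$ with $\mathcal{E}_{\underline{n}}$, which rests entirely on the uniqueness criterion of Proposition \ref{JHProposition}.
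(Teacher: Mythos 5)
Your proposal is correct and follows essentially the same route as the paper, whose proof is exactly the one-line observation that the conditions $\operatorname{Hom}(Q_{i+1},F_{i+1}/F_{i-1})=0$ are open by semicontinuity, so the result follows from Propositions \ref{sequenceof projective} and \ref{JHProposition}. You have merely spelled out the details the paper leaves implicit (running semicontinuity at the Quot-scheme level, descending the open locus, and using the uniqueness criterion to pass from filtrations to bundles), all of which are consistent with the paper's intended argument.
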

\begin{proof}The conditions (\ref{goodcondition}) are open by the
Semicontinuity Theorem, so this follows from Propositions
\ref{sequenceof projective} and \ref{JHProposition}.\end{proof}

Now, we can calculate the number of parameters on which
$\mathcal{E}_{\underline{n}}$ depends.
\begin{lemma}\label{Parameters}The elements of $\mathcal{E}_{\underline{n}}$ depend
on exactly\begin{equation*}\textnormal{dim}
\widetilde{\mathcal{M}}(n-k,d)-\sum_{1\leq j<i\leq r}n_i n_j (g-1)
\end{equation*}parameters.
\end{lemma}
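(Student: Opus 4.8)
The plan is to reduce the count to a dimension computation on $\mathcal{G}_{\underline{n}}$ and then to add up the dimensions of the fibres in the tower of Proposition~\ref{sequenceof projective}. First I would note that, on the locus of $\mathcal{G}_{\underline{n}}$ where the conditions (\ref{goodcondition}) hold, Proposition~\ref{JHProposition} tells us that the Jordan--H\"older filtration of $F$ is unique; hence the forgetful map ``filtration $\mapsto F$'' is injective there, so $\mathcal{E}_{\underline{n}}$ is identified with the open subvariety of $\mathcal{G}_{\underline{n}}$ defined by (\ref{goodcondition}). Since $\mathcal{G}_{\underline{n}}$ is an iterated projective bundle over the irreducible base $\mathcal{M}_1\times\cdots\times\mathcal{M}_r\setminus\Delta_r$, it is irreducible, and (assuming $\mathcal{E}_{\underline{n}}\neq\emptyset$, otherwise there is nothing to prove) the number of parameters on which $\mathcal{E}_{\underline{n}}$ depends equals $\dim\mathcal{G}_{\underline{n}}$.

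Next I would compute each contribution in
$$\mathcal{G}_{\underline{n}}\rightarrow\mathcal{G}_{(n_1,\ldots,n_{r-1})}\rightarrow\cdots\rightarrow\mathcal{G}_{(n_1,n_2)}\rightarrow\mathcal{M}_1\times\cdots\times\mathcal{M}_r\setminus\Delta_r.$$
Since $\dim\mathcal{M}(n_i,d_i)=n_i^{2}(g-1)+1$ and $\Delta_r$ is a proper closed subset, the base has dimension $\sum_{i=1}^{r}\big(n_i^{2}(g-1)+1\big)$; note that the $d_i$ are forced by $\mu(Q_i)=\mu(F)$. The fibre of $\mathcal{G}_{(n_1,\ldots,n_j)}\rightarrow\mathcal{G}_{(n_1,\ldots,n_{j-1})}$ over a filtration with graded pieces $Q_1,\ldots,Q_r$ and sub-bundle $F_{j-1}$ is $\mathbb{P}\big(H^1(Q_j^{\ast}\otimes F_{j-1})\big)$. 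As $F_{j-1}$ is built out of the $Q_i$ with $i<j$, which are stable of slope $\mu(F)$ and pairwise non-isomorphic to $Q_j$, one has $\operatorname{Hom}(Q_j,F_{j-1})=0$, and moreover $\deg(Q_j^{\ast}\otimes F_{j-1})=n_j\deg F_{j-1}-(\rank F_{j-1})\deg Q_j=0$ because all slopes involved coincide. Riemann--Roch then gives $h^1(Q_j^{\ast}\otimes F_{j-1})=n_j(n_1+\cdots+n_{j-1})(g-1)$, so this fibre has dimension $n_j(n_1+\cdots+n_{j-1})(g-1)-1$.

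Adding up,
\begin{align*}
\dim\mathcal{G}_{\underline{n}}
&=\sum_{i=1}^{r}\big(n_i^{2}(g-1)+1\big)+\sum_{j=2}^{r}\Big(n_j(n_1+\cdots+n_{j-1})(g-1)-1\Big)\\
&=(g-1)\Big(\sum_{i=1}^{r}n_i^{2}+\sum_{1\le j<i\le r}n_i n_j\Big)+1.
\end{align*}
Using $(n-k)^{2}=\big(\sum_i n_i\big)^{2}=\sum_i n_i^{2}+2\sum_{1\le j<i\le r}n_i n_j$ and $\dim\widetilde{\mathcal{M}}(n-k,d)=(n-k)^{2}(g-1)+1$, this rewrites as
$$\dim\mathcal{G}_{\underline{n}}=(n-k)^{2}(g-1)+1-(g-1)\sum_{1\le j<i\le r}n_i n_j=\dim\widetilde{\mathcal{M}}(n-k,d)-\sum_{1\le j<i\le r}n_i n_j(g-1),$$
which is the asserted number of parameters.

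Most of this is bookkeeping once Proposition~\ref{sequenceof projective} is available. The steps that need genuine care are the two vanishings $\operatorname{Hom}(Q_j,F_{j-1})=0$ and $\deg(Q_j^{\ast}\otimes F_{j-1})=0$ --- both consequences of the graded pieces being stable of equal slope and pairwise non-isomorphic --- which make the Euler-characteristic computation clean, and the use of Proposition~\ref{JHProposition} to pass between the space of filtrations and the space of bundles. I expect the one real subtlety to be this last identification, i.e.\ checking that the forgetful map is one-to-one precisely on the open locus (\ref{goodcondition}); the dimension count itself is forced by the projective-bundle structure.
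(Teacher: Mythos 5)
Your proof is correct and follows essentially the same route as the paper's: the paper argues by induction on $r$, at each step adding the parameters for $Q_r$ and the dimension of $\mathbb{P}\big(H^1(Q_r^{\ast}\otimes F_{r-1})\big)$ computed via $h^0(Q_r^{\ast}\otimes F_{r-1})=0$ and Riemann--Roch, which is exactly your sum over the tower of projective fibrations with the induction unrolled. The two ingredients you flag as needing care (uniqueness of the filtration via Proposition~\ref{JHProposition}, and the vanishing of $\operatorname{Hom}(Q_j,F_{j-1})$ from stability, equal slopes and pairwise non-isomorphism) are precisely the ones the paper invokes.
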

\begin{proof}We use induction on $r$. The case $r=1$ is trivial.
Assume now that $r\geq 2$ and that the lemma is true for
Jordan--H\"older filtrations of length $r-1$. By Definition
\ref{JHdefinition} and Proposition \ref{JHProposition}, any $F \in
\mathcal{E}_{\underline{n}}$ has a unique Jordan--H\"older
filtration, and in particular there is a non-split extension
\begin{equation*}0\rightarrow F_{r-1} \rightarrow F
\rightarrow Q_r \rightarrow 0
\end{equation*}uniquely determined up to a scalar multiple. Since $Q_i \ncong Q_j$ for
$i\neq j$, we have $h^0 (Q_r^{\ast}\otimes F_{r-1})=0$, so by
Riemann-Roch,$$h^1(Q_r^{\ast}\otimes
F_{r-1})=(n-k-n_r)n_r(g-1).$$By the inductive hypothesis, the
non-split extensions depend on at most$$\textnormal{dim}
\widetilde{\mathcal{M}}(n-k-n_r,d-d_r)-\sum_{1\leq j<i\leq r-1}n_i
n_j (g-1) +n_r^2(g-1)+1+(n-k-n_r)n_r(g-1)-1$$ parameters. It is
easy to check that this coincides with the required formula.
\end{proof}

It will be convenient for our descriptions to use a canonical
filtration associated to our semistable vector bundle that encodes
the information about the Jordan--H\"older filtrations admitted by
this bundle. It turns out that for a given semistable vector
bundle $F$, there is a canonical filtration that satisfies certain
properties as it is proved in the following Lemma.

\begin{lemma}\label{canonical}For every semistable vector
bundle $F$, there is a canonical
filtration\begin{equation}\label{canonical-filtration}
0=E_0\subset E_1\subset E_2 \subset ... \subset E_s=F,
\end{equation}such that the quotients $E_i/E_{i-1}$ are direct sums of stable vector bundles
$E'$ satisfying $\mu(E')=\mu(F)$ for $1\leq i \leq s$ and
$F/E_{i-1}$ contains no subbundle which is the direct sum of
$E_i/E_{i-1}$ with a stable vector bundle of the same slope as
$F$. Actually, if (\ref{j-h-filtration}) is a Jordan--H\"older
filtration of $F$, then $\oplus_{i=1}^{s}E_i/E_{i-1}\cong
\oplus_{j=1}^r F_j/F_{j-1} =\grad F$.
\end{lemma}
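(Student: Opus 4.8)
The plan is to construct the filtration inductively by a "maximal isotypic saturation" procedure. First I would set $E_0 = 0$. Given $E_{i-1}$, consider the semistable bundle $F/E_{i-1}$, which has the same slope as $F$; its socle (the maximal subbundle that is a direct sum of stable bundles of slope $\mu(F)$) is a well-defined, canonical subbundle — this uses that any nonzero morphism between stable bundles of the same slope is an isomorphism, so the sum of all stable subbundles of slope $\mu(F)$ is again a direct sum of such, hence a genuine subbundle. Define $E_i \subset F$ to be the preimage of this socle under $F \twoheadrightarrow F/E_{i-1}$. Then $E_i/E_{i-1}$ is a direct sum of stable bundles of slope $\mu(F)$, and by maximality of the socle, $F/E_{i-1}$ contains no subbundle of the form $(E_i/E_{i-1}) \oplus E'$ with $E'$ stable of slope $\mu(F)$: any such $E'$ would be a stable subbundle of $F/E_{i-1}$ not contained in the socle, contradicting that the socle contains \emph{every} stable subbundle of that slope. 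Since $F$ has finite length with respect to stable subquotients of its own slope, the chain $E_0 \subsetneq E_1 \subsetneq \cdots$ terminates at some $E_s = F$; termination also shows $E_s = F$ rather than a proper subbundle, because if $F/E_{s-1} \neq 0$ it is semistable of slope $\mu(F)$ and hence has a nonzero socle. Canonicity is immediate: the socle of each $F/E_{i-1}$ is intrinsically defined, so the whole filtration is determined by $F$.

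Next I would establish the identity $\bigoplus_{i=1}^s E_i/E_{i-1} \cong \bigoplus_{j=1}^r F_j/F_{j-1} = \grad F$. For this, refine the canonical filtration \eqref{canonical-filtration} to a Jordan--H\"older filtration: each quotient $E_i/E_{i-1}$ is a direct sum of stable bundles of slope $\mu(F)$, so inserting those summands one at a time interpolates a genuine Jordan--H\"older filtration of $F$ whose associated graded is $\bigoplus_i E_i/E_{i-1}$ (each stable summand appearing once). Since the graded object $\grad F$ is independent of the choice of Jordan--H\"older filtration — this is exactly the content of the definition of $\grad F$ in the excerpt, the quotients $Q_1,\dots,Q_r$ being determined up to order — we get $\bigoplus_i E_i/E_{i-1} \cong \grad F$, which is what is claimed.

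The main obstacle I expect is justifying carefully that the "socle" is well-behaved as a \emph{subbundle} (saturated subsheaf) and not merely a subsheaf: one must check that the sum of all stable sub\emph{bundles} of $F/E_{i-1}$ of slope $\mu(F)$ is itself a subbundle and is a direct sum of stable bundles. The argument is that if $S_1, S_2 \subset G$ are stable subbundles of a semistable bundle $G$ of the same slope, then $S_1 \cap S_2$ is either $0$ or all of both (by stability and equal slopes, any proper nonzero subsheaf would violate a slope inequality), and $S_1 + S_2$, being an extension of stable bundles of slope $\mu(G)$ inside the semistable $G$, is again semistable of that slope with its saturation having no strictly larger slope — so one builds up the socle as an internal direct sum and checks it is saturated because a destabilizing saturation would produce a stable quotient of larger slope, impossible in the semistable $G$. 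The rest of the argument — termination, canonicity, and the comparison with $\grad F$ — is then formal. One should also remark that $s \leq r$ with equality precisely when $\grad F$ has no repeated stable factors, though this is not needed for the statement.
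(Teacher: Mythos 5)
Your proposal is correct and follows essentially the same route as the paper: both construct the filtration by taking $E_1$ to be the maximal subbundle of $F$ that is a direct sum of stable bundles of slope $\mu(F)$ (the socle) and then inducting on $F/E_1$, the only real difference in presentation being that you obtain canonicity by defining the socle as the sum of \emph{all} stable subbundles of that slope and checking this sum is semisimple and saturated, whereas the paper shows directly that two maximal such subbundles $G,H$ would force $G+H$ to be a strictly larger one. As a small bonus, your refinement argument identifying $\bigoplus_i E_i/E_{i-1}$ with $\grad F$ supplies a step that the paper's proof states but does not spell out.
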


\begin{proof}It follows from \cite{K2}, Lemma 3.2.\end{proof}

Later on in this paper, the use of these canonical filtrations
will simplify our descriptions. From now on we restrict our study
to the cases $r=2$ and $r=3$, cases in which we have complete
descriptions.

\subsection{The case $r=2$} Consider the extensions
\begin{equation}\label{F}0\rightarrow Q_1 \rightarrow F \rightarrow
Q_2 \rightarrow 0.
\end{equation}We have $\mu (Q_1)=\mu (F) =\mu (Q_2)$. We denote by $(n_1,d_1)$ and
$(n_2,d_2)$ the invariants of $Q_1$ and $Q_2$ respectively. In
this case $\underline{n}=(n_1,n_2)$ is the type of (\ref{F}) and
$n_1 +n_2 =n-k$. Note that $\grad(F)=Q_1\oplus Q_2$.

\subsubsection{The non-split case} We will classify the non-split
extensions (\ref{F}) in which $F_1$ and $Q_2$ are stable bundles.
As we have already seen, either $Hom(Q_2,Q_1)=0$, or
$Q_1\cong Q_2$. If $Hom(Q_2,Q_1)=0$, then
$h^0(Q_2^{\ast}\otimes Q_1)=0$. If $Q_1\cong Q_2$ then
$h^0(Q_2^{\ast}\otimes Q_1)=1$. Here the quasi-projective variety
$\mathcal{E}_{\underline{n}}$ (see Definition \ref{JHdefinition})
is the space of extension classes of non-splitting extensions
(\ref{F}) satisfying $Hom(Q_2,Q_1)=0$. We need the following
\begin{definition}\textnormal{Let $\mathcal{E}_{\underline{n}}'$ be the space of extension
classes of non-splitting extensions (\ref{F}) satisfying
$Q_1\cong Q_2$.}
\end{definition}
From Proposition \ref{JHProposition} and Definition
\ref{JHdefinition}, we have
\begin{lemma}\label{unicity1}With the above conditions, the non-splitting extension
(\ref{F}) is uniquely determined by $F$ (up to scalar multiples).
In particular the type $\underline{n}$ of (\ref{F}) is determined
by $F$ in this case.
\end{lemma}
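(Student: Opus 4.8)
The plan is to prove Lemma \ref{unicity1} as a direct consequence of Proposition \ref{JHProposition} specialized to the length-two case. The Jordan--H\"older filtration in question is simply $0\subset Q_1\subset F$, so the only sequence of the form (\ref{split_comment}) that occurs is the extension (\ref{F}) itself, namely $0\to Q_1\to F\to Q_2\to 0$. Proposition \ref{JHProposition} tells us that the filtration is unique if and only if this sequence does not split; since we are in the case $Hom(Q_2,Q_1)=0$ (the defining condition for $\mathcal{E}_{\underline{n}}$), and since $Q_1$ and $Q_2$ are stable of the same slope, the non-split hypothesis on (\ref{F}) is exactly what is needed. Thus the filtration $0\subset Q_1\subset F$ is the unique Jordan--H\"older filtration of $F$.

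First I would note explicitly that any Jordan--H\"older filtration of $F$ has length two, because all Jordan--H\"older filtrations of a fixed semistable bundle have the same length (recalled just after the definition of Jordan--H\"older filtration). Hence a second such filtration would again be of the form $0\subset F_1'\subset F$ with $F_1'$ stable of slope $\mu(F)$ and $F/F_1'$ stable of slope $\mu(F)$. Applying Proposition \ref{JHProposition} with $r=2$ and $i=1$: uniqueness of the filtration is equivalent to the non-splitting of $0\to Q_1\to F\to Q_2\to 0$, which holds by hypothesis. Since $Hom(Q_2,Q_1)=0$ we are moreover in the situation where no two of the stable quotients are isomorphic, so Proposition \ref{JHProposition} applies cleanly. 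Therefore $F_1'=Q_1$ and the filtration is unique.

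Finally, uniqueness of the filtration immediately gives that the type $\underline{n}=(n_1,n_2)=(\rank Q_1,\rank Q_2)$ is determined by $F$, since the type is defined in terms of the ranks of the successive quotients of the (now unique) Jordan--H\"older filtration. I do not anticipate any genuine obstacle here: the statement is essentially a transcription of Proposition \ref{JHProposition} in the base case $r=2$, the only minor points being to invoke the constancy of the length of Jordan--H\"older filtrations and to observe that the condition $Hom(Q_2,Q_1)=0$ places us in the ``no two $Q_i$ isomorphic'' branch of that proposition.
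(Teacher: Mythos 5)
Your proposal is correct and follows exactly the route the paper takes: the paper's own justification is the one-line citation ``From Proposition \ref{JHProposition} and Definition \ref{JHdefinition}'', and your argument is simply that citation spelled out for $r=2$, where the only sequence of type (\ref{split_comment}) is (\ref{F}) itself, non-split by hypothesis. The only cosmetic remark is that the ``up to scalar multiples'' clause also uses that $Q_1$ and $Q_2$ are simple (being stable), so the extension class is pinned down up to the scalar ambiguity in the identifications; this is implicit in your write-up and in the paper's.
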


We know that the extensions of $Q_2$ by $Q_1$ are classified, up
to equivalence, by $H^1(Q_2^{\ast}\otimes Q_1)$. By Riemann-Roch Theorem
\begin{equation}\label{DIMENSION}h^1(Q_2^{\ast}\otimes
F_1)=n_1(n-k-n_1)(g-1)+h^0(Q_2^{\ast}\otimes F_1).
\end{equation}

We give a complete description of $\mathcal{E}_{\underline{n}}$
and $\mathcal{E}_{\underline{n}}'$.
\begin{proposition}\label{eandep}\begin{itemize}\item[(i)]When $n_1\neq \frac{1}{2}(n-k)$ the
space $\mathcal{E}_{\underline{n}}$ is isomorphic to a projective
bundle over $\mathcal{M}_1\times \mathcal{M}_2$, with fiber the
projective space of dimension $n_1(n-k-n_1)(g-1)-1$ and
$\mathcal{E}_{\underline{n}}'=\emptyset$. \item[(ii)]When
$n_1=\frac{1}{2}(n-k)$:

The space $\mathcal{E}_{\underline{n}}$ is isomorphic to a
projective bundle over $\mathcal{M}_1\times \mathcal{M}_1\setminus
\Delta$, with fiber the projective space of dimension
$n_1^2(g-1)-1$ and where $\Delta := \{ (F',Q)\in \mathcal{M}_1
\times \mathcal{M}_1$ such that $F'\cong Q \}$.

The space $\mathcal{E}_{\underline{n}}'$ is isomorphic to a
projective bundle over $\mathcal{M}_1$, with fiber the projective
space of dimension $n_1^2(g-1)$.
\end{itemize}\end{proposition}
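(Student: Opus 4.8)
The plan is to derive the whole statement from Proposition~\ref{sequenceof projective} (in its base case $r=2$), the Riemann--Roch identity (\ref{DIMENSION}), and a short case analysis, with only one extra ingredient needed for the space $\mathcal{E}'_{\underline n}$. First I would record that the degrees are forced: since $\mu(Q_1)=\mu(Q_2)=\mu(F)=\tfrac{d}{n-k}$ we must have $d_i=\tfrac{n_id}{n-k}$, so $\mathcal{M}_i=\mathcal{M}(n_i,d_i)$ depends only on $n_i$ and $d$; consequently $\mathcal{M}_1=\mathcal{M}_2$ exactly when $n_1=n_2=\tfrac12(n-k)$, and the big diagonal $\Delta_2\subset\mathcal{M}_1\times\mathcal{M}_2$ is empty when $n_1\neq n_2$. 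I would also isolate the elementary fact used repeatedly: for stable bundles of equal slope, $\mathrm{Hom}(Q_2,Q_1)\neq0\iff Q_1\cong Q_2$, so the conditions ``$Q_1\not\cong Q_2$'', ``$\mathrm{Hom}(Q_2,Q_1)=0$'' and ``$h^0(Q_2^{\ast}\otimes Q_1)=0$'' are equivalent, whereas if $Q_1\cong Q_2=:Q$ then $h^0(Q^{\ast}\otimes Q)=\dim\mathrm{End}(Q)=1$ by simplicity.

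For part (i), since $n_1\neq n_2$ every $(Q_1,Q_2)\in\mathcal{M}_1\times\mathcal{M}_2$ has $Q_1\not\cong Q_2$, so the defining condition of $\mathcal{E}_{\underline n}$ is automatic and $\mathcal{E}_{\underline n}=\mathcal{G}_{(n_1,n_2)}$, while no non-split extension (\ref{F}) can have $Q_1\cong Q_2$ (different ranks), whence $\mathcal{E}'_{\underline n}=\emptyset$. Proposition~\ref{sequenceof projective} with $r=2$ then exhibits $\mathcal{G}_{(n_1,n_2)}$ as a projective bundle over $\mathcal{M}_1\times\mathcal{M}_2\setminus\Delta_2=\mathcal{M}_1\times\mathcal{M}_2$ with fibre $\mathbb{P}(H^1(Q_2^{\ast}\otimes Q_1))$, and by (\ref{DIMENSION}) together with $h^0(Q_2^{\ast}\otimes Q_1)=0$ this fibre has dimension $n_1(n-k-n_1)(g-1)-1$. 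The description of $\mathcal{E}_{\underline n}$ in (ii) is identical: now $\mathcal{M}_1=\mathcal{M}_2=:\mathcal{M}$, the condition $\mathrm{Hom}(Q_2,Q_1)=0$ says precisely $(Q_1,Q_2)\notin\Delta$, so $\mathcal{E}_{\underline n}=\mathcal{G}_{(n_1,n_1)}$ is the projective bundle of Proposition~\ref{sequenceof projective} over $\mathcal{M}\times\mathcal{M}\setminus\Delta$, with fibre $\mathbb{P}^{n_1^2(g-1)-1}$ (again $h^0=0$, and here $n-k-n_1=n_1$).

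The one genuinely new point is $\mathcal{E}'_{\underline n}$ in (ii), the space of non-split self-extensions $0\to Q\to F\to Q\to0$. I would first observe that $F\mapsto Q$ is well defined: for $r=2$ a non-split (\ref{F}) has a unique Jordan--H\"older filtration (Lemma~\ref{unicity1}, via Proposition~\ref{JHProposition}), so $Q$ is canonically attached to $F$; this gives a map $\mathcal{E}'_{\underline n}\to\mathcal{M}$ whose fibre over $Q$ is $\mathbb{P}(\mathrm{Ext}^1(Q,Q))=\mathbb{P}(H^1(Q^{\ast}\otimes Q))$, of dimension $n_1^2(g-1)+h^0(Q^{\ast}\otimes Q)-1=n_1^2(g-1)$ by (\ref{DIMENSION}). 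To upgrade this to a projective bundle I would repeat the descent argument from the base case of Proposition~\ref{sequenceof projective} with $\mathscr{F}=\mathscr{G}$ taken to be a universal bundle $\mathcal{U}^s$ on $R^s\times X$ at the Quot-scheme level (or a Poincar\'e bundle when $\gcd(n_1,d_1)=1$): the relative sheaf $\mathcal{R}^1(q)_{\ast}(\mathscr{E}nd(\mathcal{U}^s))$ is locally free of rank $n_1^2(g-1)+1$ by Cohomology and Base Change, the centre of $GL(N_1)$ acts on $\mathcal{U}^s$ by scalars and hence trivially on $\mathscr{E}nd(\mathcal{U}^s)$ and on $\mathbb{P}(\mathcal{R}^1(q)_{\ast}(\mathscr{E}nd(\mathcal{U}^s)))$, and Kempf's descent lemma produces the desired $\mathbb{P}^{n_1^2(g-1)}$-bundle over $\mathcal{M}=R^s/PGL(N_1)$.

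The main obstacle I anticipate is exactly this last descent step: unlike the situations treated verbatim by Proposition~\ref{sequenceof projective}, here $\mathscr{E}nd(\mathcal{U}^s)$ has non-vanishing relative $\mathcal{R}^0$ (the scalar endomorphisms form a line subbundle), so one must check carefully that $\mathcal{R}^1(q)_{\ast}(\mathscr{E}nd(\mathcal{U}^s))$ is still locally free and commutes with base change before invoking Kempf's lemma --- which holds because on the curve $X$ both $h^0=1$ and $h^1$ are constant and $h^2=0$. Everything else reduces to routine bookkeeping with Riemann--Roch and the observations already made.
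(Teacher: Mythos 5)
Your proposal is correct and follows essentially the same route as the paper: part (i) and the $\mathcal{E}_{\underline{n}}$ statements are read off from the base case of Proposition \ref{sequenceof projective} together with Riemann--Roch, emptiness of $\mathcal{E}'_{\underline{n}}$ in (i) comes from the rank mismatch, and for $\mathcal{E}'_{\underline{n}}$ in (ii) the paper likewise realizes the relevant $\mathrm{Ext}^1$ sheaf as a locally free sheaf on $\mathcal{M}_1$ (by pulling back $\mathcal{R}^1q_{\ast}H$ along the diagonal, which is the same as your $\mathcal{R}^1q'_{\ast}\mathscr{E}nd(\mathcal{P})$) and projectivizes, handling the non-coprime case at the Quot-scheme level with descent exactly as you anticipate.
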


\begin{proof}The construction for $\mathcal{E}_{\underline{n}}$
in both cases appears in the proof of Proposition \ref{sequenceof
projective}.

Regarding $\mathcal{E}_{\underline{n}}'$, when $n_1\neq \frac{1}{2}(n-k)$, one has that $Q_1$ and $Q_2$ are stable bundles of the same slope and different rank, then $Q_1\ncong Q_2$, hence
$\mathcal{E}_{\underline{n}}'=\emptyset$. When $n_1=
\frac{1}{2}(n-k)$, consider first the case in which
$\gcd(n_1,d_1)=1$ and let $\mathcal{M}=\mathcal{M}_1$. Let
$\mathcal{P}$ be the Poincar\'e bundle on $\mathcal{M}\times X$
and $H=\mathscr{H}om((p_2\times id_X)^{\ast}\mathcal{P},(p_1\times
id_X)^{\ast}\mathcal{P})$. Consider the following commutative
diagram
\begin{equation*}\xymatrix{\mathcal{M}'=\mathcal{M}\times X \ar_{q'}[d]
\ar^{\Delta'}[r]&\mathcal{M}\times \mathcal{M}\times X
\ar^{q}[d]\\
\mathcal{M} \ar^(0.5){\Delta}[r]&\mathcal{M}\times \mathcal{M}}
\end{equation*}where $\Delta$ is the diagonal morphism, $q$ and
$q'$ are the natural projections, and $\mathcal{M}'$ is the fiber
product between $\mathcal{M}\times \mathcal{M}\times X$ and
$\mathcal{M}$ over $\mathcal{M}\times \mathcal{M}$.

The pull-back by $\Delta$ of the sheaf
$\mathcal{R}^1q_{\ast} H$ is a locally free sheaf on
$\mathcal{M}$, $\Delta^{\ast}\mathcal{R}^1q_{\ast} H$. Using the
base change formula (\cite[III. \S 9. Proposition 9.3]{H}) we get
that
\begin{equation*}\Delta^{\ast}\mathcal{R}^1q_{\ast} H \simeq \mathcal{R}^1q'_{\ast}\Delta'^{\ast}
H.\end{equation*}So the sheaf
$\mathcal{R}^1q'_{\ast}\Delta'^{\ast} H$ is a bundle on
$\mathcal{M}$ that satisfies all the required properties. By (\ref{DIMENSION}) the projective bundle associated to $\mathcal{R}^1q'_{\ast}\Delta'^{\ast} H$ has dimension $n_1^2(g-1)$.

When the invariants are not coprime an argument similar to the one
we use in the proof of Proposition \ref{sequenceof projective} gives us the result.
\end{proof}

Now, using Lemma \ref{Parameters} we can calculate the number of
parameters on which $\mathcal{E}_{\underline{n}}$ and
$\mathcal{E}_{\underline{n}}'$ depend.
\begin{lemma}The elements of $\mathcal{E}_{\underline{n}}$ depend
on exactly\begin{equation*}\textnormal{dim}
\widetilde{\mathcal{M}}(n-k,d)-n_1(n-k-n_1)(g-1)
\end{equation*}parameters. When $n_1=\frac{1}{2}(n-k)$,
the elements of $\mathcal{E}_{\underline{n}}'$ depend on
exactly\begin{equation*}\textnormal{dim}
\widetilde{\mathcal{M}}(n-k,d)-2n_1^2(g-1)
\end{equation*}parameters.
\end{lemma}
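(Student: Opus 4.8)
The plan is to obtain both counts by reducing to results already established: the count for $\mathcal{E}_{\underline{n}}$ is the $r=2$ case of Lemma \ref{Parameters}, and the count for $\mathcal{E}_{\underline{n}}'$ follows from the explicit projective‑bundle description of Proposition \ref{eandep}(ii) together with a one‑line Riemann--Roch bookkeeping.

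First I would specialize Lemma \ref{Parameters} to $r=2$ and $\underline{n}=(n_1,n_2)$. The double sum $\sum_{1\le j<i\le r}n_in_j(g-1)$ collapses to the single term $n_1n_2(g-1)$, and the relation $n_1+n_2=n-k$ rewrites it as $n_1(n-k-n_1)(g-1)$; this is exactly the asserted number of parameters for $\mathcal{E}_{\underline{n}}$. I would also recall briefly why ``number of parameters'' equals $\dim\mathcal{E}_{\underline{n}}$ here: by Lemma \ref{unicity1} a bundle $F\in\mathcal{E}_{\underline{n}}$ recovers its (unique) Jordan--H\"older filtration, hence the pair $(Q_1,Q_2)$ and the extension class of (\ref{F}) up to scalar, so the forgetful map from $\mathcal{E}_{\underline{n}}$ to isomorphism classes of such $F$ is injective.

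For $\mathcal{E}_{\underline{n}}'$, which is nonempty only when $n_1=n_2=\tfrac12(n-k)$, I would invoke Proposition \ref{eandep}(ii), which presents $\mathcal{E}_{\underline{n}}'$ as a projective bundle over $\mathcal{M}_1=\widetilde{\mathcal{M}}(n_1,d_1)$ with fibre $\mathbb{P}^{n_1^2(g-1)}$. Hence
\[
\dim\mathcal{E}_{\underline{n}}'=\dim\widetilde{\mathcal{M}}(n_1,d_1)+n_1^2(g-1)=\bigl(n_1^2(g-1)+1\bigr)+n_1^2(g-1)=2n_1^2(g-1)+1 .
\]
Since $n-k=2n_1$ we have $\dim\widetilde{\mathcal{M}}(n-k,d)=(n-k)^2(g-1)+1=4n_1^2(g-1)+1$, so $\dim\widetilde{\mathcal{M}}(n-k,d)-2n_1^2(g-1)=2n_1^2(g-1)+1$, which matches. (If one prefers not to quote Proposition \ref{eandep}, the same fibre dimension comes directly: the fibre over $Q\in\mathcal{M}_1$ is $\mathbb{P}(H^1(Q^{\ast}\otimes Q))$, and Riemann--Roch together with $h^0(Q^{\ast}\otimes Q)=1$ for $Q$ stable gives $h^1(Q^{\ast}\otimes Q)=n_1^2(g-1)+1$.)

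There is no genuine obstacle; the only step needing a bit of care is the dimension bookkeeping, namely that $\dim\widetilde{\mathcal{M}}(m,e)=m^2(g-1)+1$ holds regardless of $\gcd(m,e)$ — it is precisely this that makes the two displayed expressions coincide — and, in the $\mathcal{E}_{\underline{n}}'$ case, keeping straight that the base is a single factor $\mathcal{M}_1$ (not $\mathcal{M}_1\times\mathcal{M}_2$), so that the fibre dimension $n_1^2(g-1)$ is not reduced by $1$ as in Proposition \ref{eandep}(i).
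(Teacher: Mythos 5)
Your proposal is correct and follows essentially the same route as the paper: the first count is exactly the $r=2$ specialization of Lemma \ref{Parameters}, and the second is the parameter count implicit in the projective-bundle description of Proposition \ref{eandep}(ii) (base $\mathcal{M}_1$ of dimension $n_1^2(g-1)+1$ plus fibre $\mathbb{P}^{n_1^2(g-1)}$), which is precisely the ``argument similar to Lemma \ref{Parameters}'' the paper invokes. Your dimension bookkeeping, including $h^1(Q^{\ast}\otimes Q)=n_1^2(g-1)+1$ via $h^0(Q^{\ast}\otimes Q)=1$, matches the paper's equation (\ref{DIMENSION}).
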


\begin{proof}The first statement is deduced from Lemma \ref{Parameters}.
Note that this computation does not depend on either
$n_1=\frac{1}{2}(n-k)$ or $n_1\neq \frac{1}{2}(n-k)$. For the
numbers of parameters in which $\mathcal{E}_{\underline{n}}'$
depends when $n_1=\frac{1}{2}(n-k)$, the statement follows from an argument
similar to the one used in Lemma \ref{Parameters}.
\end{proof}
\subsubsection{The split case} In this case we consider the bundles $F\cong Q_1 \oplus Q_2 $, such that
 $Q_1$ and $Q_2$ are stable bundles, and $\mu (F)=
\mu (Q_1)=\mu (Q_2)$.
\begin{definition}\textnormal{Let $\mathcal{SE}_{\underline{n}}$ be the space that classifies the bundles $F\cong Q_1\oplus Q_2$ satisfying
$Hom(Q_2,Q_1)=0$. And let $\mathcal{SE}_{\underline{n}}'$ be the
space of those split bundles $F$
satisfying $Q_1\cong Q_2$.}
\end{definition}
\begin{parrafo}\label{parrafo2}\textnormal{When $Q_1 \ncong Q_2 $ and $n_1 \neq \frac{1}{2}(n-k)$, the
bundles $F\cong Q_1\oplus Q_2$ are classified by
$\mathcal{M}_1\times \mathcal{M}_2$. When $n_1 = \frac{1}{2}(n-k)$
and $Q_1 \ncong Q_2 $, then these are classified by
$(\mathcal{M}_1\times \mathcal{M}_1\setminus \Delta )/(\mathbb{Z}/2)$
where the group $\mathbb{Z}/2$ acts permuting the factors.
Finally, when $Q_1 \cong Q_2$, the bundles are
classified by $\mathcal{M}_1$.}
\end{parrafo}
We can again compute the number of parameters on which
$\mathcal{SE}_{\underline{n}}$ and $\mathcal{SE}_{\underline{n}}'$
depend.
\begin{lemma}The elements of $\mathcal{SE}_{\underline{n}}$ depend
on exactly\begin{equation*}\textnormal{dim}
\widetilde{\mathcal{M}}(n_1,d_1)+\textnormal{dim}
\widetilde{\mathcal{M}}(n-k-n_1,d-d_1)
\end{equation*}parameters. When $n_1=\frac{1}{2}(n-k)$,
the elements of $\mathcal{SE}_{\underline{n}}'$ depend on
exactly\begin{equation*}\textnormal{dim}
\widetilde{\mathcal{M}}(n_1,d_1)
\end{equation*}parameters.
\end{lemma}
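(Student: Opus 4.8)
The plan is to reduce this lemma to the computation already carried out in Lemma \ref{Parameters} together with the explicit descriptions of $\mathcal{SE}_{\underline{n}}$ and $\mathcal{SE}_{\underline{n}}'$ recorded in \ref{parrafo2}. The key point is that in the split case there are no extension parameters to count: the bundle $F$ is completely determined by the pair $(Q_1,Q_2)$ of stable summands, so the number of parameters is just the dimension of the space parametrising such pairs.

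First I would treat $\mathcal{SE}_{\underline{n}}$. By \ref{parrafo2}, when $Q_1\ncong Q_2$ the bundles $F\cong Q_1\oplus Q_2$ with $Q_1,Q_2$ stable of the prescribed invariants and with $\Hom(Q_2,Q_1)=0$ are classified by an open dense subset of $\mathcal{M}_1\times\mathcal{M}_2$ (one removes the locus where $Q_1\cong Q_2$, which by the numerical hypotheses either is empty or has positive codimension), and by the quotient $(\mathcal{M}_1\times\mathcal{M}_1\setminus\Delta)/(\mathbb{Z}/2)$ when $n_1=\tfrac12(n-k)$; either way the parameter count is $\dim\mathcal{M}_1+\dim\mathcal{M}_2=\dim\widetilde{\mathcal{M}}(n_1,d_1)+\dim\widetilde{\mathcal{M}}(n-k-n_1,d-d_1)$, since a finite quotient does not change dimension. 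Alternatively, and more in the spirit of the surrounding results, I would observe that this follows directly from the inductive argument of Lemma \ref{Parameters}: in the split case the non-split extension that appears in the inductive step is replaced by the trivial extension, so the term $h^1(Q_r^*\otimes F_{r-1})-1$ contributed by the projective space of non-split extension classes simply does not appear, and one is left with $\dim\widetilde{\mathcal{M}}(n_1,d_1)+\dim\widetilde{\mathcal{M}}(n-k-n_1,d-d_1)$.

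Next I would handle $\mathcal{SE}_{\underline{n}}'$ in the case $n_1=\tfrac12(n-k)$, so $Q_1\cong Q_2$ and $F\cong Q_1^{\oplus 2}$. By \ref{parrafo2} such bundles are classified by $\mathcal{M}_1$, so they depend on exactly $\dim\widetilde{\mathcal{M}}(n_1,d_1)$ parameters. As the lemma statement indicates, this may also be derived by an argument parallel to Lemma \ref{Parameters}: one has a filtration with both graded pieces isomorphic to $Q_1$, and since the extension is split one only records the $\dim\mathcal{M}_1$ parameters of $Q_1$ itself, with no extension classes and no second modulus to choose.

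The only genuine subtlety — the ``hard part'' — is making sure the open conditions defining $\mathcal{SE}_{\underline{n}}$ (namely $\Hom(Q_2,Q_1)=0$ together with the requirement that the two summands be non-isomorphic when relevant) cut out a dense open, hence dimension-preserving, subset of the relevant product of moduli spaces rather than something smaller; this is exactly the kind of semicontinuity argument used after Definition \ref{JHdefinition}, and for generic stable $Q_1,Q_2$ of the same slope and, when $n_1\ne\tfrac12(n-k)$, different rank, one has $\Hom(Q_2,Q_1)=0$ automatically, so no dimension is lost. Once this is in place, the parameter counts are immediate from \ref{parrafo2}, and I expect the whole proof to be a short paragraph essentially saying ``apply \ref{parrafo2}, noting that finite quotients preserve dimension, and compare with the inductive scheme of Lemma \ref{Parameters}.''
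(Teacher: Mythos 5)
Your proposal is correct and coincides with the paper's (implicit) argument: the paper gives no separate proof of this lemma, since it follows immediately from the classification in \ref{parrafo2} --- $\mathcal{SE}_{\underline{n}}$ is parametrised by $\mathcal{M}_1\times\mathcal{M}_2$ (or $(\mathcal{M}_1\times\mathcal{M}_1\setminus\Delta)/(\mathbb{Z}/2)$), and $\mathcal{SE}_{\underline{n}}'$ by $\mathcal{M}_1$, with finite quotients and open subsets preserving dimension. One small simplification to your ``hard part'': no genericity or semicontinuity is needed, because a non-zero homomorphism between stable bundles of equal slope is an isomorphism, so $\Hom(Q_2,Q_1)=0$ holds for \emph{all} pairs with $Q_1\ncong Q_2$ (and automatically when $n_1\neq n-k-n_1$), not merely generic ones.
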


\subsection{The case $r=3$} When $r=3$, we will classify the different possible sets of
Jordan--H\"{o}lder filtrations that are admitted by our strictly
semistable vector bundles.

When $r=3$, the Jordan--H\"older filtrations admitted by $F$ are
of the form\begin{equation}\label{filr3}0\subset F_1 \subset F_2
\subset F_3 =F.\end{equation}In order to construct
``universal'' filtrations we must construct universal extensions as
we did for $r=2$ (see Proposition \ref{eandep}) in several steps,
which allow us to get universal bundles $F_i$. These bundles could
be split bundles or nonsplit ones.

Let us fix the notation $Q_1=F_1$ and $Q_i=F_i/F_{i-1}$, for all
$i=2, \ldots r$. The bundles $Q_i$ are stable and of the same
slope as $F$.

Let $\underline{n}=(n_1,n_2,n_3)=(\rank (Q_1), \rank (Q_2) ,\rank (Q_3))$ be the type of $F$. We denote by $\underline{n}(\sigma)$ the type
$(n_{\sigma (1)},n_{\sigma (2)},n_{\sigma (3)})$ where $\sigma$ is
a permutation of three elements, for example
$\underline{n}(12)=(n_2,n_1,n_3)$. Assume that the $Q_1$, $Q_2$, $Q_3$  have the same slope. We
assume further that the graded object associated to the semistable vector
bundle $F$ is $\grad F=Q_1\oplus Q_2 \oplus Q_3$. We consider the
following exact sequences
\begin{equation}\label{eq1}
0\rightarrow Q_1\rightarrow F_2\rightarrow Q_2\rightarrow0
\end{equation}
\begin{equation}\label{eq2}
0\rightarrow F_2\rightarrow F\rightarrow Q_3\rightarrow0
\end{equation}and
\begin{equation}\label{eq4}
0\rightarrow Q_2\rightarrow F/Q_1\rightarrow Q_3\rightarrow0,
\end{equation}
canonically associated to the Jordan--H\"older filtration
(\ref{filr3}). Let us denote the classes of these extensions by
$e_1,e_2,\eta $. When $e_i\ne0$, $\eta\ne0$, we write $[e_i]$ and
$[\eta]$ for the corresponding element of the projective space.
Now, the extension classes corresponding to these extensions are
related by the following exact sequence in cohomology
\begin{equation}\label{ext-coho}\ldots \rightarrow \textnormal{Hom}(Q_3,Q_2)
\rightarrow H^1(Q_{3}^{\ast}\otimes Q_{1} )\xymatrix{\ar^{i}[r]&}
H^1(Q_{3}^{\ast}\otimes F_2) \xymatrix{\ar^{p}[r]&}
H^1(Q_{3}^{\ast}\otimes Q_2) \rightarrow 0,
\end{equation}then $\eta =p(e_2)$.

In order to classify the bundles $F$ which arise in this way, we
distinguish the following cases by looking at whether the previous
extensions split or do not. We introduce the
following sets:
\begin{parraf}[\emph{Set 1}]\label{set1}\textnormal{ In this case,
the extensions (\ref{eq1}), (\ref{eq2}) and (\ref{eq4}) are
non-split. From Proposition \ref{JHProposition} the
Jordan--H\"older filtration of $F$ is unique and the bundles $F$
which arise are classified by $5$-tuples
$$Q_1,Q_2,Q_3,[e_1],[e_2].$$Note that in this case, the canonical filtration
(see Lemma \ref{canonical}) coincides with the Jordan--H\"older
filtration.}
\end{parraf}

\begin{parraf}[\emph{Set 2}]\textnormal{ Here, the extensions
(\ref{eq1}) and (\ref{eq2}) are non-split, but (\ref{eq4}) is
split. In this case, the Jordan--H\"older filtration of $F$ is not
unique. There exists an extension \begin{equation}\label{eq3}
0\rightarrow Q_1\rightarrow F_{31}\rightarrow Q_3\rightarrow0,
\end{equation}we denote its extension class in $H^1(Q_3^{\ast}\otimes Q_1)$
by $\eta'$, such that $i(\eta')=e_2$ (see (\ref{ext-coho})).
Then, the bundles $F$ which arise are classified by
$$Q_1,Q_2,Q_3,[e_1],[\eta'],$$
but note that $(Q_1,Q_2,Q_3,[e_1],[\eta'])$ and
$(Q_1,Q_3,Q_2,[\eta'],[e_1])$ give the same $F$. To avoid
duplication, we need to factor out by the action of $\mathbb{Z}/2$
permuting the bundles $Q_2$ and $Q_3$. The canonical filtration in
this case is given by the following exact sequence
\begin{equation}\label{eq40}0\rightarrow Q_1 \rightarrow F
\rightarrow Q_2 \oplus Q_3 \rightarrow 0.\end{equation}From the
canonical filtration we will globalise the construction later on
in this paper.}
\end{parraf}

\begin{parraf}[\emph{Set 3}]\textnormal{ In this case, the extension (\ref{eq1}) is the only one that is
split. The Jordan--H\"older filtration of $F$ is not unique. The
bundles $F$ which arise are classified by
$$Q_1,Q_2,Q_3,[\eta],[\eta'].$$As before, in order to avoid
duplication, we need to factor out by the action of $\mathbb{Z}/2$
permuting the bundles $Q_1$ and $Q_2$. The canonical filtration in
this case is given by the following exact sequence
\begin{equation}\label{1canonicalfilfors3}0\rightarrow Q_1 \oplus Q_2 \rightarrow F
\rightarrow Q_3 \rightarrow 0.\end{equation}}
\end{parraf}

\begin{parraf}[\emph{Set 4}]\textnormal{ The only non-splitting
extension is (\ref{eq1}). Then, the bundle $F$ is
\begin{equation}\label{eq7}
F=F_2\oplus Q_3.
\end{equation}The
bundles $F$ are classified by
$$Q_1,Q_2,Q_3,[e_1].$$The canonical filtration in this case is$$
0\rightarrow Q_1 \oplus Q_3 \rightarrow F_2\oplus Q_3 \rightarrow
Q_2 \rightarrow 0.$$Note that if we interchange $Q_2$ and $Q_3$ we get that $F=F_{31}\oplus Q_2$, which corresponds to the case in which (\ref{eq1}) and (\ref{eq4}) split.}
\end{parraf}

\begin{parraf}[\emph{Set 5}]\textnormal{ Finally, we consider the
case where all the extensions are split. Then
\begin{equation}\label{eq8}
F\cong \grad F =Q_1\oplus Q_2\oplus Q_3.
\end{equation}So the bundles $F$ are classified by $Q_1$, $Q_2$
and $Q_3$. To avoid duplication, we factor out by the action of
$S_3$ permuting the bundles. }
\end{parraf}

If we want to classify the strictly semistable vector bundles in
$\widetilde{\mathcal{M}}(n-k,d)$ of type $\underline
n=(n_1,n_2,n_3)$ that admit a Jordan--H\"older filtration
(\ref{filr3}) and such that $\grad (F)=Q_1\oplus Q_2 \oplus Q_3$
we need also to consider the possibility of $Q_i\cong Q_j$ for
some $i$, $j$. This is accounted for in the following definition:

\begin{definition}\textnormal{Let
\emph{Group 1} be the space whose elements are strictly semistable
vector bundles in $\widetilde{\mathcal{M}}(n-k,d)$ of type
$\underline n=(n_1,n_2,n_3)$ and such that $\grad (F)=Q_1\oplus
Q_2 \oplus Q_3$ where $Q_i\ncong Q_j$ for every $i$, $j$.
Analogously, let \emph{Group 2} be the space in which $Q_i\cong
Q_j$ for two indices $i$ and $j$. Finally, let \emph{Group 3} be
the space in which $Q_i\cong Q_j$ for all $i$ and $j$.}
\end{definition}

As in the case $r=2$ we want to classify in a geometric way all
the possible situations that can appear. In this setup we shall
not have a beautiful description of the spaces of quotients in
terms of projective fibrations. We will still be able to give
some universal constructions in all the cases, but in some of them
only local ones, based always on the results of universal
extensions we introduced in Section \ref{Lange}.

\begin{definition}\label{generalcase}\textnormal{Let
$\mathcal{S}_i^j \mathcal{E}_{\underline n}$ be the space whose
elements are strictly semistable vector bundles in
$\widetilde{\mathcal{M}}(n-k,d)$ of type $\underline
n=(n_1,n_2,n_3)$. The index $j$ means group $j$ and the index $i$
means the set $i$ within the corresponding group. For the elements
of group 2, we need to introduce a couple more indices $\alpha$
and $\beta$. Then $\mathcal{S}_i^2 {\mathcal{E}}_{\underline
n}^{\alpha \beta }$ means that in the graded objects of the
elements of the set, the bundles $Q_{\alpha}$ and $Q_{\beta}$ are
isomorphic. Note that, with the notation of Definition
\ref{JHdefinition} we have that $\mathcal{S}_1^1
\mathcal{E}_{\underline n}\cong \mathcal{E}_{\underline n}$.}
\end{definition}

We are ready now to do our construction. Let
$\mathcal{M}_i=\mathcal{M}(n_i,d_i)$ be the moduli space of stable
bundles of rank $n_i$ and degree $d_i$. Note that
$n_1+n_2+n_3=n-k$ and $d_1+d_2+d_3=d$. We consider here the type
$\underline{n}=(n_1,n_2,n_3)$ and $\underline{n}({\sigma})$ will
be the type obtained from $\underline{n}=(n_1,n_2,n_3)$ after
acting by an element $\sigma \in S_3$. The invariants we have
fixed must satisfy $\frac{d_1}{n_1}=
\frac{d_2}{n_2}=\frac{d_3}{n_3}$.

We are going to construct a ``universal'' Jordan--H\"{o}lder
filtration over $\mathcal{M}_1 \times \mathcal{M}_2 \times
\mathcal{M}_3 \times X$, such that for every point in the base,
i.e. for a fixed graduation, we obtain a Jordan--H\"{o}lder
filtration verifying the required properties. These ``universal''
filtrations will be filtrations associated to the elements of the
different spaces we have defined in Definition \ref{generalcase}.

\subsubsection{The case when $n_1 \neq n_2 \neq n_3$} In this case
it is not possible that $Q_i \cong Q_j$ for some pair
$i\neq j$, so when $n_1,n_2,n_3$ are all distinct,
the spaces $S_i^1{\mathcal E}_{\underline n}$ for $i=1,\ldots ,5$
are the only ones that are non-empty.

The construction for
$\mathcal{S}_1^1\mathcal{E}_{\underline{n}}$ has already been done in the proof of Proposition
\ref{sequenceof projective} when $r=3$. There, the construction is done at the Quot scheme level which implies that this works for any $(n_1,n_2,n_3)$. At the end we use descent lemmas in order to obtain the required construction at the moduli space level.

Here we do all the constructions at the moduli space level assuming the existence of Poincar\'e bundles. This is not true in general. Actually, when $\gcd(n_i,d_i)\neq 1 $ the Poincar\'e bundles do not
exist on $\mathcal{M}_i=\mathcal{M}(n_i,d_i)$. We do the construction at this level for simplicity. When the Poincar\'e bundles do not exist one may do the construction at the Quot scheme level and use descent lemmas afterwards as we did in the proof of Proposition
\ref{sequenceof projective}.

\begin{paragr}{The construction for
$\mathcal{S}_3^1{\mathcal{E}}_{\underline{n}}$.}\label{pa_split}\textnormal{ We have that $\mathcal{M}_i=\mathcal{M}(n_i,d_i)$
for $i=$ $1$, $2$, $3$ and assume that $n_1 < n_2$. Suppose again
that there exist Poincar\'e bundles $\mathcal{P}_1$ and
$\mathcal{P}_2$ on $\mathcal{M}_1\times X$ and
$\mathcal{M}_2\times X$ respectively. Consider also the
projections $p_i:\mathcal{M}_1\times \mathcal{M}_2 \rightarrow
\mathcal{M}_i$ for $i=1$, $2$. There exists a universal vector
bundle $(p_1 \times id_X)^{\ast} \mathcal{P}_1 \oplus (p_2 \times
id_X)^{\ast} \mathcal{P}_2$ over $\mathcal{M}_1\times
\mathcal{M}_2$ in the usual sense. Let $\mathcal{P}_3$ be the
Poincar\'e bundle on $\mathcal{M}_3\times X$. The rest of the
construction is similar to the one in the proof of Proposition
\ref{sequenceof projective}. Let $q':(\mathcal{M}_1\times
\mathcal{M}_2)\times \mathcal{M}_3 \times X \rightarrow
(\mathcal{M}_1\times \mathcal{M}_2)\times \mathcal{M}_3$. Let
$\mathcal{H}'$ be the sheaf
\begin{equation*}\mathcal{R}^1q'_{\ast}(\mathscr{H}om((p_3\times
id_X)^{\ast}\mathcal{P}_3,(p_1 \times id_X)^{\ast} \mathcal{P}_1
\oplus (p_2 \times id_X)^{\ast} \mathcal{P}_2)),
\end{equation*}this is also a bundle on
$(\mathcal{M}_1 \times \mathcal{M}_2 ) \times \mathcal{M}_3$. We
consider the projectivization of $\mathcal{H}'$,
$\mathbb{P}(\mathcal{H}')$. Let $\mathcal{O}_{P'}(1)$ be the
tautological bundle of the projective bundle
$\mathbb{P}(\mathcal{H}')$. For all point $(m_1, m_2, m_3)\in
(\mathcal{M}_1 \times \mathcal{M}_2 ) \times \mathcal{M}_3$, and
for all $m'\in \mathcal{H}'_{(m_1, m_2, m_3)}$, let
$\mathcal{O}_{P'}(1)_{m'} =m'^{\ast}$. Let
$\pi_{P'}:\mathbb{P}(\mathcal{H}') \rightarrow
\mathbb{P}(\mathcal{H})\times \mathcal{M}_3$ and let
$p_{\mathbb{P}(\mathcal{H}')}:\mathbb{P}(\mathcal{H}') \times X
\rightarrow \mathbb{P}(\mathcal{H}')$ be the
projection.}

\textnormal{Now, we want to construct a vector bundle $\mathcal{F}$ over
$\mathbb{P}(\mathcal{H}')\times X$ satisfying all the required
properties. As above, we are in the hypotheses of Remark
\ref{remark}, so there exists a vector bundle $\mathcal{F}$ over
$\mathbb{P}(\mathcal{H}')\times X$ and an exact sequence
\begin{align}\label{canonicalfilfors3}0\rightarrow (\pi_{P'} \times id_X)^{\ast}&\big{(}(p_1 \times id_X)^{\ast}
\mathcal{P}_1 \oplus (p_2 \times id_X)^{\ast} \mathcal{P}_2
\big{)} \otimes
p_{\mathbb{P}(\mathcal{H}')}^{\ast}\mathcal{O}_{P'}(1)\rightarrow
\mathcal{F} \rightarrow \\&\rightarrow (\pi_{P'} \times
id_X)^{\ast}(p_3 \times id_X)^{\ast} \mathcal{P}_3 \rightarrow 0,
\nonumber
\end{align}such that for all $(m_1, m_2, m_3)\in
(\mathcal{M}_1 \times \mathcal{M}_2 ) \times \mathcal{M}_3$, and
for all $m'\in \mathcal{H}'_{(m_1, m_2, m_3)}$, its restriction to
$\{ m' \} \times X$ is the extension
\begin{equation*}0\rightarrow (\mathcal{P}_{1_{m_1}} \oplus \mathcal{P}_{2_{m_2}})\otimes m'^{\ast}
\rightarrow \mathcal{F}_{m'} \rightarrow \mathcal{P}_{3_{m_3}}
\rightarrow 0.
\end{equation*}As a result of this construction we have obtained an extension
(\ref{canonicalfilfors3}) that is the globalising version of the
canonical filtration (\ref{1canonicalfilfors3}). From this
extension we will describe geometrically the corresponding stratum
at the moduli space of coherent systems.}

\textnormal{As in the above case, we must take into account the cases in
which the Poincar\'e bundles do not exist.}

\begin{remark}\textnormal{The construction for $\mathcal{S}_2^1 {\mathcal{E}}_{\underline{n}}$ is obtained by dualising $\mathcal{S}_3^1 {\mathcal{E}}_{\underline{n}}$,
while $\mathcal{S}_4^1 \mathcal{E}_{\underline{n}}$ is simply $\mathcal{E}_{(n_1,n_2)}\times \mathcal{M}_3$. Finally, the construction for $\mathcal{S}_5^1
\mathcal{E}_{\underline{n}}$ is given by $ \mathcal{M}_1\times \mathcal{M}_2\times \mathcal{M}_3$. }
\end{remark}
\end{paragr}

\subsubsection{The case when $n_1 = n_2 \neq n_3$} For the cases in
which the 3-tuple of elements that form the graduations
associated to our semistable vector bundles are elements in
$\mathcal{M}_1 \times \mathcal{M}_1 \times \mathcal{M}_3
\backslash \Delta_{12}$ where $\Delta_{12}$ the diagonal in the
two first components, the constructions we have described for
$\mathcal{S}_i^1\mathcal{E}_{\underline{n}}$ for $i=1,\ldots ,5$
when $n_1 \neq n_2 \neq n_3$ are the same for $n_1 = n_2 \neq
n_3$.

Under the relations between the ranks of the quotient bundles we
also have that $\mathcal{S}_i^3\mathcal{E}_{\underline{n}}$ are
empty.

For the remaining cases, those in which the graduation is an
element of $\Delta_{12} \times \mathcal{M}_3$, a more detailed
study is needed. We describe here the construction of
$\mathcal{S}_1^2\mathcal{E}_{\underline{n}}^{12}$, the rest of the
cases come easily from a suitable combination of the following
construction and the previous ones.

\begin{paragr}{The construction for
$\mathcal{S}_1^2\mathcal{E}_{\underline{n}}^{12}$.} \label{otroparrafo}\textnormal{We want to
construct a sort of universal Jordan--H\"older filtration over
$\Delta_{12} \times \mathcal{M}_3$, where $\Delta_{12}$ is the
diagonal for the two first components, note that in this case
$\mathcal{M}_1 =\mathcal{M}_2$.}

 \textnormal{At the very beginning we restrict ourselves again to a
hypothetical case in which we have Poincar\'e bundles over our
moduli spaces. In spite of the fact that in general this is not
true, we will be able again to work at the Quot scheme level and
afterwards using descent lemmas we will be able to apply our
results at the moduli of stable vector bundles level.}

\textnormal{As in our original
construction (proof of Proposition \ref{sequenceof projective}),
first of all we need to construct a ``universal'' extension over
$\Delta_{12}$. But in this case, a universal extension in the
usual sense (\cite{NR}, \cite{R} \& \cite{S}) does not exist. This
non-existence could be proved bearing in mind that Proposition
\ref{Ext-NR} is a special case of Proposition \ref{Ext-first},
more precisely, the case when $\mathscr{E}xt^0_f
(\mathscr{F},\mathscr{G})=0$ and $\mathscr{E}xt^1_f
(\mathscr{F},\mathscr{G})$ commutes with base change, and the same
for the projective analogues. Here we follow the notation of
Proposition \ref{eandep} (ii), and let $\mathcal{M}_1=\mathcal{M}$
which is a reduced variety. For the morphism $q':\mathcal{M}\times
X \rightarrow \mathcal{M}$, we have that
\begin{align*}\mathscr{E}xt^0_{q'}( \Delta'^{\ast}(p_2 & \times
id_X )^{\ast}\mathcal{P}, \Delta'^{\ast}(p_1\times id_X
)^{\ast}\mathcal{P})\cong \\& \cong R^0 q'_{\ast}\Delta'^{\ast}
\mathscr{H}om ( (p_2\times id_X )^{\ast}\mathcal{P}, (p_1\times
id_X )^{\ast}\mathcal{P}),\end{align*}which is not zero. So there
is not a universal extension in the usual sense.}
\end{paragr}

\begin{theo}\label{Universal}A ``universal'' family of
extensions in the sense of \ref{universal} exists over $\Delta_{12}$.
\end{theo}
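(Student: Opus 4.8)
The plan is to show that although a universal extension in the usual sense (as in Proposition \ref{Ext-NR} or its projective analogue) does not exist over $\Delta_{12}$ --- precisely because, as observed in \ref{otroparrafo}, the relative $\mathscr{H}om$-sheaf does not vanish --- one can still produce a \emph{locally} universal family in the sense of \ref{universal} by appealing instead to the more general machinery of Subsection \ref{Lange}, namely Proposition \ref{Ext-first} (and its projective counterpart Proposition \ref{ext-projective}). First I would set up the notation carefully: identify $\Delta_{12}$ with $\mathcal{M}=\mathcal{M}_1$ via the diagonal, take $f=q':\mathcal{M}\times X\to\mathcal{M}$, and set $\mathscr{F}=\Delta'^{\ast}(p_2\times id_X)^{\ast}\mathcal{P}$ and $\mathscr{G}=\Delta'^{\ast}(p_1\times id_X)^{\ast}\mathcal{P}$ (when a Poincar\'e bundle $\mathcal{P}$ exists; otherwise work at the Quot-scheme level, exactly as in the proof of Proposition \ref{sequenceof projective}).

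The key steps would be as follows. First, verify that the hypotheses of Proposition \ref{Ext-first}/\ref{ext-projective} are met: $\mathcal{M}$ is reduced, $\mathscr{F}$ is locally free (hence, by item (ii) in Subsection \ref{Lange}, $\mathscr{E}xt^i_{q'}(\mathscr{F},\mathscr{G})\simeq R^iq'_{\ast}(\mathscr{F}^{\vee}\otimes\mathscr{G})$), and $\mathscr{E}xt^i_{q'}(\mathscr{F},\mathscr{G})$ commutes with base change for $i=0,1$. The base-change statement for $i=0$ and $i=1$ follows from the ``Cohomology and Base Change'' theorem applied to $\varphi^i(y)$: on each fibre $\{m\}\times X$ the sheaf $\mathscr{F}^\vee\otimes\mathscr{G}$ restricts to $\mathcal{P}_m^\vee\otimes\mathcal{P}_m=\mathscr{E}nd(\mathcal{P}_m)$ with $\mathcal{P}_m$ stable, so $h^0=1$ and $h^1=n_1^2(g-1)+1$ are both constant in $m$; constancy of these dimensions forces $\varphi^0(y)$ and $\varphi^1(y)$ to be isomorphisms (this is the standard consequence: $\tau^i(y)$ surjective plus local freeness of the higher direct image). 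Second, conclude by Proposition \ref{Ext-first} that the functor $E(S)=H^0(S,\mathscr{E}xt^1_{p_S}(q_S^{\ast}\mathscr{F},q_S^{\ast}\mathscr{G}))$ is representable by the vector bundle $V=\mathbb{V}(\mathscr{E}xt^1_{q'}(\mathscr{F},\mathscr{G})^{\vee})$ over $\mathcal{M}$, carrying a family $(e_v)_{v\in V}$ of extensions universal in the category of reduced noetherian $\mathcal{M}$-schemes --- which is exactly the statement of \ref{universal}. Third, pass to the projective version via Proposition \ref{ext-projective} to get the analogous statement on $P=\mathbb{P}(\mathscr{E}xt^1_{q'}(\mathscr{F},\mathscr{G})^{\vee})$ for nonsplitting extensions, matching the dimension $n_1^2(g-1)$ computed after \eqref{DIMENSION}. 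Finally, in the non-coprime case, run the same argument on the relevant Quot scheme $R^s_i$ using the universal bundle $\mathcal{U}^s_i$, and descend using Kempf's lemma exactly as in the proof of Proposition \ref{sequenceof projective}; the $PGL$-equivariance of all the sheaves involved is inherited from that proof.

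The main obstacle --- and the reason the theorem needs separate treatment rather than a citation of Proposition \ref{Ext-NR} --- is precisely that $\mathscr{E}xt^0_{q'}(\mathscr{F},\mathscr{G})\neq 0$, so a \emph{globally} defined universal family cannot exist: by the exact sequence relating the relative and global $Ext$'s (the edge-homomorphism sequence with $\varepsilon$ and $\mu$ in Subsection \ref{Lange}), the obstruction group $H^1(\mathcal{M},q'_{\ast}\mathscr{H}om(\mathscr{F},\mathscr{G}))$ need not vanish and the map $\mu$ need not be surjective, so one genuinely only gets a family up to an open cover, i.e. in the sense of \ref{universal}. One has to be careful, therefore, to claim only the ``local'' universality and not more; the positive content of the proof is just the base-change verification, which is routine once the fibrewise $h^0,h^1$ are seen to be constant. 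I would also remark that the resulting $V$ (resp. $P$) is, after quotienting, the total space carrying the universal extension \eqref{eq40} underlying the canonical filtration of Set 2, which is what is used in the subsequent stratification; but that identification is a bookkeeping matter and not part of the proof of existence itself.
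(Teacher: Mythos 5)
Your proposal is correct and follows essentially the same route as the paper: verify that $\mathscr{E}xt^i_{q'}(\mathscr{F},\mathscr{G})$ commutes with base change for $i=0,1$ (the paper gets surjectivity of $\varphi^1(m)$ from the fact that the fibres of $q'$ are projective curves and then invokes Grauert and ``Cohomology and Base Change,'' while you argue via constancy of $h^0=1$ and $h^1=n_1^2(g-1)+1$ on fibres --- the same two theorems doing the same work), and then apply Proposition \ref{ext-projective} over the reduced base $\mathcal{M}$ to obtain the family $(e_p)_{p\in P}$ universal in the sense of \ref{universal}. Your additional remarks on the failure of global universality and on the Quot-scheme/descent variant are consistent with what the paper says elsewhere (Paragraph \ref{otroparrafo} and the discussion following the theorem) but are not part of its proof of this statement.
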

\begin{proof}Consider first the following commutative diagram
\begin{equation*}\xymatrix{P\times X \ar_{p'_P}[d]
\ar^{q'_P}[r]&\mathcal{M}\times X
\ar^{q'}[d]\\
P \ar^(0.5){g}[r]&\mathcal{M}}
\end{equation*}where $P=\mathbb{P}(\mathscr{E}xt^1_{q'} ( \Delta'^{\ast}(p_2\times id_X
)^{\ast}\mathcal{P}, \Delta'^{\ast}(p_1\times id_X
)^{\ast}\mathcal{P})^{\ast})$ and $P\times X$ is the fiber product
between $P$ and $\mathcal{M}\times X$ over $\mathcal{M}$.

The existence of this ``universal'' family is based mainly on the
fact that for every $m\in \mathcal{M}$, the base change morphism
\begin{align*}\varphi^1(m):R^1 q'_{\ast}\Delta'^{\ast}&
\mathscr{H}om ( (p_2\times id_X )^{\ast}\mathcal{P}, (p_1\times
id_X )^{\ast}\mathcal{P}) \otimes k(m) \rightarrow \\& \rightarrow
H^1(X_m ,\Delta'^{\ast} \mathscr{H}om ( (p_2\times id_X
)^{\ast}\mathcal{P}, (p_1\times id_X
)^{\ast}\mathcal{P})_m)\end{align*}is surjective. To see this
surjectivity it is enough to note that the fibres of $q'$ are
projective curves. Then, using the Grauert theorem and the
``Cohomology and base change'' theorem (\cite[III. \S 12.
Corollary 12.9 and Theorem 12.11]{H}) we conclude.

Combining the surjectivity of $\varphi^1(m)$ and the ``Cohomology
and Base Change'' theorem we have that $\varphi^i(m)$ are
isomorphisms for $i=0$, $1$. So, because $\mathcal{M}$ is reduced,
we can apply Proposition \ref{ext-projective}. Then, there exists
a family $(e_p)_{p\in P}$ of extensions of
${q'_P}^{\ast}(p_1\times id_X)^{\ast}\mathcal{P}$ by
${q'_P}^{\ast}(p_2\times id_X)^{\ast}\mathcal{P} \otimes
{p'_P}^{\ast}\mathcal{O}_P(1)$ over
$P=\mathbb{P}(\mathscr{E}xt^1_{q'} (\Delta'^{\ast} (p_2\times id_X
)^{\ast}\mathcal{P}, \Delta'^{\ast}(p_1\times id_X
)^{\ast}\mathcal{P})^{\ast})$ which is universal, in the sense of
\ref{universal}, in the category of reduced noetherian
$Y$-schemes for the classes of families of non-splitting
extensions of ${q'_P}^{\ast}(p_1\times id_X)^{\ast}\mathcal{P}$ by
${q'_P}^{\ast}(p_2\times id_X)^{\ast}\mathcal{P} \otimes
{p'_P}^{\ast}\mathscr{L}$ over $S$ with arbitrary $\mathscr{L}\in
Pic(S)$ modulo the canonical operation of
$H^0(S,\mathcal{O}_S^{\ast})$.
\end{proof}

\textnormal{Once we have constructed a family of extensions, $(e_p)_{p\in P}$,
in the first step, for the second we use a similar argument as in
the previous ``universal'' constructions and produce a universal
extension in the usual meaning for each element of the family
$(e_p)_{p\in P}$. Hence, we fix an element of $(e_p)_{p\in P}$,
say
$$0\rightarrow {q'_P}^{\ast}(p_2\times id_X)^{\ast}\mathcal{P}
\otimes {p'_P}^{\ast}\mathcal{O}_P(1)\rightarrow \mathcal{F}_p
\rightarrow {q'_P}^{\ast}(p_1\times
id_X)^{\ast}\mathcal{P}\rightarrow 0.$$Now, as in the usual
notation, let $q'':\{ p \} \times \mathcal{M}_3 \times X
\rightarrow \{ p \} \times \mathcal{M}_3$ and $q_1 : \{ p \}
\times \mathcal{M}_3  \rightarrow \{ p \} $, $p_3 :\{ p \} \times
\mathcal{M}_3 \rightarrow  \mathcal{M}_3$. We have that
\begin{equation*}\mathcal{H}_p =\mathcal{R}^1q''_{\ast}(\mathscr{H}om((p_3\times
id_X)^{\ast}\mathcal{P}_3,(q_1\times id_X)^{\ast}\mathcal{F}_p))
\end{equation*}is a bundle on $\{ p \} \times \mathcal{M}_3$, and
we consider $\mathbb{P}(\mathcal{H}_p)$. To conclude we need to
construct a universal extension on
$\mathbb{P}(\mathcal{H}_p)\times X$. This follows from the fact
that
$$Hom(\mathcal{P}_3|_{\{ m_3 \} \times X},\mathcal{F}_p)=0$$ for all $ m_3 \in
\mathcal{M}_3$, note that in case there exists such a morphism, we
would have another one from $\mathcal{P}_3|_{\{ m_3 \} \times X}$
to $\mathcal{P}|_{\{ m_1 \} \times X}$ for some $m_1 \in
\mathcal{M}$, but this contradicts the hypotheses. Under this
property, the conditions of Proposition \ref{Ext-NR} are fulfilled
(see Remark \ref{remark}) so we have a universal extension in the
usual sense.}

\begin{remark}\textnormal{The constructions for the case when $n_1 = n_2 = n_3$
are analogous to the ones we have described earlier.}\end{remark}

\begin{remark}\textnormal{Regarding the number of parameters on which our sets depend,
from Lemma \ref{Parameters} one obtains that
$\mathcal{S}_1^1\mathcal{E}_{\underline{n}}$ for
$\underline{n}=(n_1,n_2,n_3)$ depends on exactly
$$\dim \widetilde{\mathcal{M}}(n-k,d)
-n_1n_2(g-1)-n_3(n_1+n_2)(g-1).$$Now, to compute the number of
parameters on which the elements of
$\mathcal{S}_3^1{\mathcal{E}}_{\underline{n}}$ depend, it is
enough to look at the extensions of the form$$0\rightarrow
Q_1\oplus Q_2 \rightarrow F \rightarrow Q_3 \rightarrow 0.$$These
extensions depend on exactly $$ \dim
\widetilde{\mathcal{M}}(n-k,d)-(n_1+n_2)n_3(g-1)-2n_1n_2(g-1)+1.$$Finally,
$\mathcal{S}_1^2\mathcal{E}_{\underline{n}}^{12}$ depends on
exactly $$\dim
\widetilde{\mathcal{M}}(n-k,d)-3\dim
\widetilde{\mathcal{M}}(n_1,d_1)-2n_1n_3(g-1)+2.$$The
computations for the remaining cases follow in a similar
fashion.}\end{remark}

\section{A stratification of $G_{L}(n,d,k)$}
\subsection{Defining the stratification}

In this subsection we will define a stratification of the moduli
space $G_L(n,d,k)$ when $k<n$. To this end we use the type that
was defined earlier, and all the sets that we have described from
a geometric point of view in the previous section. The idea is to
define the different strata by looking at the quotient bundle of
the BGN extension associated to every coherent system in $G_L(n,d,k)$.

By Proposition \ref{prop:restate} we know that if the quotient
bundle is stable, the BGN extension gives rise to an
$\alpha$-stable coherent system. If the quotient bundle is only
strictly semistable, the BGN extension could give rise either to an
$\alpha$-stable or a non-$\alpha$-stable coherent system.

In the previous section we studied the sets that classify the
possible Jordan--H\"older filtrations that are admitted by a given
semistable bundle. We define different sets in terms of all the
possible splittings that can appear. These sets will be fundamental to define strata in the
moduli space $G_L(n,d,k)$.

We look at the quotient bundle associated to
our coherent system. The strata are defined
accordingly:
\begin{definition}[The strata]\label{strata}\textnormal{\begin{itemize}\item[(a)]
Using the notation of the previous sections, for the case $r=2$
let $\mathscr{W}_{\mathcal{E}_{\underline{n}}}$ be the space whose
elements are those $(E,V )\in G_L(n,d,k)$ such that if
\begin{equation*}0 \rightarrow \mathcal{O}^{\oplus k}\rightarrow E
\rightarrow F \rightarrow 0
\end{equation*}is the extension that represents the BGN extension
class associated to $(E,V)$ (Proposition \ref{bgmn}), then the
quotient bundle $F$ is strictly semistable, has type
$\underline{n}$ and is an element of
$\mathcal{E}_{\underline{n}}$. We have analogous definitions when
we substitute $\mathcal{E}_{\underline{n}}$ by
$\mathcal{SE}_{\underline{n}}$, $\mathcal{E}_{\underline{n}}'$ and
$\mathcal{SE}_{\underline{n}}'$, respectively. \item[(b)] For the
case $r=3$ we have analogous definitions for the sets we
introduced in Definition \ref{generalcase}. \item[(c)] Let
$\mathscr{W}^1 = G_L(n,d,k) \setminus W$ where $W$ denotes the
subvariety of $G_L(n,d,k)$ consisting of coherent systems for
which the quotient bundle $F$ is strictly semistable.
\end{itemize}}
\end{definition}

\begin{theo}\label{Theo}The sets defined in the previous
definition are locally closed. Moreover, $\mathscr{W}^1$ is an
open set.
\end{theo}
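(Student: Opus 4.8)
The plan is to prove the statement in three parts: first that $\mathscr{W}^1$ is open, then that each stratum is locally closed, and finally to assemble these into the claim. The openness of $\mathscr{W}^1 = G_L(n,d,k)\setminus W$ is the easiest: by Proposition \ref{prop:restate}, $W$ is precisely the locus of coherent systems whose quotient bundle $F$ is strictly semistable. Since $G_L(n,d,k)\setminus W$ is isomorphic to a Grassmann fibration over $\mathcal{M}(n-k,d)$, and since the stability of $F$ is an open condition on families of bundles (by the openness of stability in flat families, or equivalently by semicontinuity applied to the relevant Quot-scheme construction), the complement $W$ is closed. Concretely, fixing a universal coherent system $(\mathcal{E},\mathcal{V})$ on $G_L(n,d,k)\times X$ (which exists, possibly after passing to a suitable parameter space, by the constructions of \cite{KN}), the quotient bundle $\mathcal{F}$ in the universal BGN extension $0\to\mathcal{O}^{\oplus k}\to\mathcal{E}\to\mathcal{F}\to 0$ over $G_L(n,d,k)\times X$ is a flat family, and the set of points where $\mathcal{F}$ restricts to a stable bundle is open. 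Hence $\mathscr{W}^1$ is open.

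Next I would show that each stratum $\mathscr{W}_{\mathcal{E}_{\underline{n}}}$ (and its variants, and the $r=3$ analogues $\mathcal{S}_i^j\mathcal{E}_{\underline{n}}$) is locally closed. The key point is that membership in a stratum is determined entirely by the isomorphism class of the quotient bundle $F$: by Definition \ref{strata}, $(E,V)\in\mathscr{W}_{\mathcal{E}_{\underline{n}}}$ if and only if $F$ is strictly semistable of type $\underline{n}$ and lies in $\mathcal{E}_{\underline{n}}$. The strategy is therefore to stratify the base, i.e. the parameter space of quotient bundles $F$, and pull this stratification back along the map $(E,V)\mapsto F$. For this I would invoke the geometric descriptions obtained in Section \ref{subsection-epsilon}: the sets $\mathcal{E}_{\underline{n}}$, $\mathcal{E}_{\underline{n}}'$, $\mathcal{SE}_{\underline{n}}$, $\mathcal{SE}_{\underline{n}}'$ and the $\mathcal{S}_i^j\mathcal{E}_{\underline{n}}$ were shown (Propositions \ref{sequenceof projective}, \ref{eandep}, and the constructions following Definition \ref{generalcase}) to carry natural quasi-projective variety structures and to sit inside $\widetilde{\mathcal{M}}(n-k,d)$ as locally closed subsets — indeed the defining conditions (non-splitting of the relevant extensions, $\mathrm{Hom}(Q_{i+1},F_{i+1}/F_{i-1})=0$, prescribed type, prescribed pattern of isomorphisms among the $Q_i$) are all either open (by the Semicontinuity Theorem, as already noted in the proof that $\mathcal{E}_{\underline{n}}$ is quasi-projective) or locally closed (fixing the type $\underline{n}$ picks out a locally closed subset of the strictly semistable locus, since ranks of graded pieces are constructible and the stratification of $\widetilde{\mathcal{M}}(n-k,d)$ by Harder–Narasimhan/Jordan–Hölder type is locally closed).

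Finally I would assemble the argument. Since $W$ is closed in $G_L(n,d,k)$, it suffices to stratify $W$. Over $W$ there is a well-defined morphism $\rho\colon W\to\widetilde{\mathcal{M}}(n-k,d)$ sending $(E,V)$ to the S-equivalence class of its quotient $F$; more precisely, using the universal quotient bundle $\mathcal{F}|_{W\times X}$ and the universal property of $\widetilde{\mathcal{M}}(n-k,d)$, one obtains such a morphism, and each stratum in $G_L(n,d,k)$ is the preimage under $\rho$ of the corresponding locally closed subset of the parameter space of semistable bundles. A preimage of a locally closed set under a morphism is locally closed, so each stratum is locally closed in $W$, hence locally closed in $G_L(n,d,k)$. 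I expect the main obstacle to be the bookkeeping around the morphism $\rho$ and the passage between moduli-space-level and Quot-scheme-level descriptions: when $\gcd(n_i,d_i)\neq 1$ there is no Poincaré bundle, so the "universal quotient" lives only on a Quot scheme and one must check that the stratifying conditions descend correctly and that $\rho$ is genuinely a morphism (not merely a set-theoretic map) — this is exactly the kind of subtlety handled in the proof of Proposition \ref{sequenceof projective} via Kempf's descent lemma, and the same devices apply here. Once that is in place, local closedness of each stratum and openness of $\mathscr{W}^1$ follow as above.
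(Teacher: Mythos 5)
Your treatment of $\mathscr{W}^1$ is fine and matches the paper: $\mathscr{W}^1=g^{\ast}(\mathcal{M}(n-k,d))$ for the morphism $g$ sending $(E,V)$ to the $S$-equivalence class of its quotient, and stability is open. The gap is in the assembly step for the individual strata. You claim that the sets $\mathcal{E}_{\underline{n}}$, $\mathcal{SE}_{\underline{n}}$, $\mathcal{E}_{\underline{n}}'$, $\mathcal{SE}_{\underline{n}}'$ ``sit inside $\widetilde{\mathcal{M}}(n-k,d)$ as locally closed subsets'' and that each stratum is the preimage under $\rho=g$ of the corresponding subset. This cannot work: $\widetilde{\mathcal{M}}(n-k,d)$ parametrizes $S$-equivalence classes, and a nonsplit extension of $Q_2$ by $Q_1$ is $S$-equivalent to the split bundle $Q_1\oplus Q_2$, so $\mathcal{E}_{\underline{n}}$ and $\mathcal{SE}_{\underline{n}}$ (and likewise all the Sets 1--5 for $r=3$ with a fixed graded object) map to exactly the same points of $\widetilde{\mathcal{M}}(n-k,d)$. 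No subset of $\widetilde{\mathcal{M}}(n-k,d)$, locally closed or otherwise, separates them, and there is no finer coarse moduli of strictly semistable bundles up to isomorphism to pull back from. The only part of the stratification that $g$ does detect is the type of the graded object, which is what the paper extracts via the locally closed sets $\mathcal{C}_{1\ldots r}=\phi_r((\mathcal{M}_1\times\ldots\times\mathcal{M}_r)^c)^c\cap\phi_r(\widetilde{\mathcal{M}}_1\times\ldots\times\widetilde{\mathcal{M}}_r)$.

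The missing idea, which is the real content of the paper's proof, is how to cut out the split/nonsplit distinction by closed conditions living directly on $G_L(n,d,k)$. The paper does this by forming, on $R_i^{ss}\times G_L(n,d,k)$, the closed $PGL(N_i)$-invariant loci $\mathcal{A}=\{(t,s):\mathrm{Hom}(V(t),W(s))\neq 0\}$ (and the variants with the arguments reversed), pushing them down to closed subsets $\mathcal{I}_{12}$, $\mathcal{I}'_{12}$ of $G_L(n,d,k)$ via the good-quotient property of $f_i$, and then expressing each stratum as a Boolean combination such as $\mathscr{W}_{\mathcal{SE}_{\underline{n}}}=\mathcal{I}_{12}\cap\mathcal{I}'_{12}\cap g^{\ast}\mathcal{C}_{12}$ and $\mathscr{W}_{\mathcal{E}_{\underline{n}}}=(\mathcal{I}_{12}\setminus\mathcal{I}_{12}\cap\mathcal{I}'_{12})\cap g^{\ast}\mathcal{C}_{12}$; the diagonal case $n_1=\frac12(n-k)$ additionally needs the semicontinuity of $h^0(\mathrm{End}(W(s)))$ to separate $Q_1\oplus Q_1$ from the nonsplit self-extensions, and the $r=3$ case needs a larger family of such loci. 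Your proposal never produces these conditions, so local closedness of the strata is not established; the appeal to Kempf's descent lemma addresses only the absence of Poincar\'e bundles, not this more basic obstruction.
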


In \cite{BGMMN}, Bradlow \emph{et al.} find a lower bound for the codimension of
$G_L(n,d,k)\setminus \mathscr{W}^1$ in $G_L(n,d,k)$. This is the following:

\begin{lemma}[\cite{BGMMN}, Corollary 7.10]Let $0<k<n$ and suppose that
$G_L(n,d,k)\ne\emptyset$. Then the codimension of
$G_L(n,d,k)\setminus \mathscr{W}^1$ in $G_L(n,d,k)$ is at least
\begin{equation}\label{estimacion}
 \min \{(\sum_{i<j} n_i n_j)(g-1) \},
\end{equation}
where the minimum is taken over all sequences of positive integers
$r,n_1,\ldots ,n_r$ such that $r\ge2$ and $\sum n_i =n-k$.
\end{lemma}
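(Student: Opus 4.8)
The plan is to bound the dimension of $W:=G_L(n,d,k)\setminus\mathscr{W}^1$ by stratifying it according to the Jordan--H\"older type of the quotient bundle of the associated BGN extension, and controlling each piece as a ``base'' (a space of quotient bundles, a quasi-projective variety constructed in Section~\ref{subsection-epsilon}) carrying a family of subvarieties of a fixed Grassmannian. We may assume $W\neq\emptyset$, as otherwise the statement is vacuous. Everything rests on the bookkeeping identity
\begin{equation*}
\beta(n,d,k)=\dim\widetilde{\mathcal{M}}(n-k,d)+k(N-k),\qquad N:=d+(n-k)(g-1),
\end{equation*}
which one verifies directly from $\dim\widetilde{\mathcal{M}}(n-k,d)=(n-k)^2(g-1)+1$ and the formula for $\beta$ in Proposition~\ref{prop:restate}; here $N=h^1(F^{\ast})$ for every semistable $F$ of rank $n-k$ and degree $d>0$ (since $h^0(F^{\ast})=0$ and Riemann--Roch applies), and $k(N-k)=\dim\Gr(k,N)$.

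First I would fix a partition $\underline{n}=(n_1,\dots,n_r)$ of $n-k$ with $r\ge2$ and collect all strata of Definition~\ref{strata} whose quotient has Jordan--H\"older type $\underline{n}$ into a single locally closed set $W_{\underline{n}}$ (locally closed by Theorem~\ref{Theo}), so that $W=\bigcup_{\underline{n}}W_{\underline{n}}$ --- every strictly semistable quotient has such a type, and the multiset of ranks is read off from $\grad F$. Assigning to $(E,V)$ its quotient bundle realises $W_{\underline{n}}$ as a family over a subvariety $Z_{\underline{n}}$ of the moduli space of semistable bundles of rank $n-k$ and degree $d$, and Lemma~\ref{Parameters} bounds the dimension of its generic piece $\mathcal{E}_{\underline{n}}$ by $\dim\widetilde{\mathcal{M}}(n-k,d)-\sum_{i<j}n_in_j(g-1)$; the remaining pieces (those with a split step in the filtration, or with two stable quotients isomorphic) satisfy the same bound, which for $r=2,3$ is exactly what the explicit parameter counts of Section~\ref{subsection-epsilon} give (each of those formulas is $\le\dim\widetilde{\mathcal{M}}(n-k,d)-\sum_{i<j}n_in_j(g-1)$ because $g\ge2$). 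The fibre of $W_{\underline{n}}\to Z_{\underline{n}}$ over $F$ lies in $\Gr(k,H^1(F^{\ast}))=\Gr(k,N)$: by condition~(iv) of Definition~\ref{BGN} a coherent system with quotient $F$ is recovered from the $k$-plane $\langle e_1,\dots,e_k\rangle\subseteq H^1(F^{\ast})$ spanned by its extension class, and two such systems yielding the same $k$-plane differ by an automorphism of $F$, so the fibre has dimension at most $k(N-k)$. Therefore
\begin{equation*}
\dim W_{\underline{n}}\le\dim Z_{\underline{n}}+k(N-k)\le\dim\widetilde{\mathcal{M}}(n-k,d)+k(N-k)-\sum_{i<j}n_in_j(g-1)=\beta(n,d,k)-\sum_{i<j}n_in_j(g-1),
\end{equation*}
so $\codim_{G_L(n,d,k)}W_{\underline{n}}\ge\sum_{i<j}n_in_j(g-1)$, and taking the minimum over all partitions $\underline{n}$ (which is how $\codim W$ is computed) yields the stated bound.

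The step I expect to be the main obstacle is the uniform dimension estimate for $Z_{\underline{n}}$ for arbitrary $r$, since Lemma~\ref{Parameters} only treats $\mathcal{E}_{\underline{n}}$ directly and, on the pieces where several stable factors coincide, extra $h^0$-terms enter the groups $h^1(Q_{i+1}^{\ast}\otimes F_i)$ governing the extensions. The plan here is to mimic the induction proving Lemma~\ref{Parameters}: building $F$ through $r-1$ successive extensions $0\to F_i\to F_{i+1}\to Q_{i+1}\to0$, each step adds at most $\dim\mathcal{M}(n_{i+1},d_{i+1})+h^1(Q_{i+1}^{\ast}\otimes F_i)-1$ parameters, and demanding that $Q_{i+1}$ be isomorphic to a stable factor already present both deletes a whole $\dim\mathcal{M}(n_{i+1},d_{i+1})$ and raises $h^1(Q_{i+1}^{\ast}\otimes F_i)$ only by the number of such factors, so the ``all distinct, all non-split'' stratum is the largest and the running total never exceeds $\dim\widetilde{\mathcal{M}}(n-k,d)-\sum_{i<j}n_in_j(g-1)$. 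Since Section~\ref{subsection-epsilon} already carries out these constructions for $r\le3$ --- the range relevant to the explicit Hodge-polynomial computations of this paper --- the only genuinely new work is this bookkeeping in the general case.
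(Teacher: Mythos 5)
This lemma is not proved in the paper at all: it is imported verbatim from \cite{BGMMN} (Corollary 7.10), so there is no internal proof to measure your argument against. What you have written is a reconstruction built out of this paper's own machinery, and its skeleton is sound: the identity $\beta(n,d,k)=\dim\widetilde{\mathcal{M}}(n-k,d)+k(N-k)$ with $N=d+(n-k)(g-1)$ checks out, the fibre of a stratum over its space of quotient bundles does sit inside $\Gr(k,N)$ (a $k$-plane of extension classes determines the coherent system up to $\Aut(F)$, which only shrinks the fibre), and Lemma \ref{Parameters} gives exactly the base bound $\dim\widetilde{\mathcal{M}}(n-k,d)-\sum_{i<j}n_in_j(g-1)$ on the generic piece $\mathcal{E}_{\underline{n}}$. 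Your verification that the degenerate pieces listed in Section \ref{subsection-epsilon} obey the same bound for $r\le 3$ is correct (in each case the slack is $n_in_j(g-1)-1\ge 0$ or better, using $g\ge 2$).

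The one place you should not let stand as written is the per-step heuristic for general $r$. The claim that imposing $Q_{i+1}\cong Q_j$ ``deletes a whole $\dim\mathcal{M}(n_{i+1},d_{i+1})$ and raises $h^1(Q_{i+1}^{\ast}\otimes F_i)$ only by the number of such factors'' does not close the estimate step by step: the saving is $n_{i+1}^2(g-1)+1$, while $h^0(Q_{i+1}^{\ast}\otimes F_i)$ can be as large as the multiplicity of $Q_{i+1}$ in $\grad F_i$, which for $n_{i+1}=1$ and small $g$ can exceed $g$. The inequality is rescued only because that multiplicity cannot be large unless earlier steps already imposed isomorphism or splitting conditions, each of which paid its own dividend; so the comparison must be run on the accumulated total, not greedily at each step. (Equivalently, one can invoke the classical fact that the locus of semistable bundles with Jordan--H\"older type $(n_1,\ldots,n_r)$ has codimension at least $\sum_{i<j}n_in_j(g-1)$ in moduli, which is presumably how \cite{BGMMN} argues.) You flag this as the main obstacle yourself; with that bookkeeping done carefully, your route gives the stated bound, and it is entirely consistent with the stratification-plus-Grassmannian-fibration picture that this paper uses in Sections 7 and 8.
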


This bound is improved in the following proposition.

\begin{proposition}Let $0<k<n$ and suppose that
$G_L(n,d,k)\ne\emptyset$. When $\gcd (n-k,d)=p\geq 2$ the codimension of
$G_L(n,d,k)\setminus \mathscr{W}^1$ in $G_L(n,d,k)$  is at least
\begin{equation*}
 \frac{p-1}{p^2}(n-k)^2(g-1).
\end{equation*}
\end{proposition}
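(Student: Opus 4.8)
The plan is to show that when $(n-k,d)=p\ge2$ the Jordan--H\"older types that can occur along $W:=G_L(n,d,k)\setminus\mathscr{W}^1$ are far more restricted than a general composition of $n-k$, and then to feed this restriction into the codimension estimate of the preceding Lemma. First I would recall that $W$ is exactly the locus of coherent systems whose BGN quotient $F$ is strictly semistable, and that by Theorem \ref{Theo} it is a union of locally closed strata, one for each type $\underline n=(n_1,\dots,n_r)$ with $r\ge2$ and $\sum_i n_i=n-k$ that is realized by a Jordan--H\"older filtration of such an $F$. Reading the proof of \cite{BGMMN}, Corollary 7.10 (equivalently, the parameter counts of Section \ref{subsection-epsilon}) as a statement about individual strata, the stratum attached to $\underline n$ has codimension at least $\bigl(\sum_{i<j}n_in_j\bigr)(g-1)$ in $G_L(n,d,k)$; hence $\codim W$ is bounded below by the minimum of $\bigl(\sum_{i<j}n_in_j\bigr)(g-1)$ taken only over the types $\underline n$ that actually occur.

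Next I would pin down which types occur. The Jordan--H\"older quotients $Q_i$ of $F$ are stable of rank $n_i$ and slope $\mu(F)=\frac{d}{n-k}$, so $\deg Q_i=\frac{n_i d}{n-k}$ must be an integer. Writing $n-k=pm$ and $d=pe$ with $\gcd(m,e)=1$, so that $m=\frac{n-k}{p}$, this forces $m\mid n_i$ for every $i$. Thus every realized type is of the form $n_i=m b_i$ with integers $b_i\ge1$, $\sum_i b_i=p$ and $r\ge2$, and $\sum_{i<j}n_in_j=m^2\sum_{i<j}b_ib_j$.

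It then remains to minimize $\sum_{i<j}b_ib_j=\tfrac12\bigl(p^2-\sum_i b_i^2\bigr)$ over compositions of $p$ into $r\ge2$ positive parts, i.e.\ to maximize $\sum_i b_i^2$. Writing $b_i=1+c_i$ with $c_i\ge0$, $\sum_i c_i=p-r$, one gets $\sum_i b_i^2=2p-r+\sum_i c_i^2\le 2p-r+(p-r)^2$, and the right-hand side is decreasing in $r$ on $[2,p]$, so the maximum is $(p-1)^2+1$, attained by $(b_1,b_2)=(p-1,1)$ (for $p=2$ only $(1,1)$ is available, which gives the same value). Hence $\sum_{i<j}b_ib_j\ge p-1$ and therefore $\sum_{i<j}n_in_j\ge(p-1)m^2=\frac{p-1}{p^2}(n-k)^2$; combining with the codimension estimate yields $\codim\bigl(G_L(n,d,k)\setminus\mathscr{W}^1\bigr)\ge\frac{p-1}{p^2}(n-k)^2(g-1)$.

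The one place that needs care --- and the main obstacle --- is the first step: checking that the bound of \cite{BGMMN}, Corollary 7.10 can legitimately be refined so that the minimum ranges only over realized types, i.e.\ recasting its proof (or that of the parameter counts in Section \ref{subsection-epsilon}) as a per-stratum codimension statement. Once that is granted, the divisibility observation and the extremal combinatorics are entirely elementary.
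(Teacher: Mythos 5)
Your proposal is correct and follows essentially the same route as the paper: both restrict the minimum in the cited Lemma to types whose ranks are multiples of $\frac{n-k}{p}$ (forced by the integrality of $\deg Q_i = \frac{n_i d}{n-k}$) and then identify the extremal type $\bigl(\frac{n-k}{p},\frac{(p-1)(n-k)}{p}\bigr)$. The only difference is that you spell out the elementary optimization showing the minimum of $\sum_{i<j}b_ib_j$ is $p-1$, which the paper merely asserts, and you flag the per-stratum reading of the Lemma, which the paper likewise takes for granted.
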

\begin{proof}In the previous lemma one needs only consider the sequences $n_1,\ldots ,n_r$ for which there exist $d_i$ with $\sum_{i}d_i=d$ such that $\frac{d_i}{n_i}=\frac{d}{n-k}$ for all $i$. This means that each $n_i$ must be a multiple of $\frac{n-k}{p}$. Given this, the minimum of (\ref{estimacion}) is attained when $r=2$ and $n_1=\frac{n-k}{p}$ and $n_2=\frac{(p-1)(n-k)}{p}$ such that $d_1=\frac{d}{p}$ and $d_2=\frac{(p-1)d}{p}$. Then $$ \min \{(\sum_{i<j} n_i n_j)(g-1)\}=\frac{(p-1)}{p^2}(n-k)^2(g-1).$$Hence we conclude.\end{proof}
\subsection{Explicit description of the strata for $r=2$}
\label{determinantal}

In this subsection, we will describe our strata for $r=2$ as complements of
determinantal varieties. As above, the problem is that in general
universal bundles do not exist on our moduli spaces of stable
bundles. Actually, they only exist when the invariants are coprime to
each other. In order to solve this problem, we will work again at
the Quot scheme level -because in these schemes we have universal
families of vector bundles- and afterwards we carry our
construction to the moduli spaces of coherent systems via descent
lemmas. In this case, we assume that the type
is $\underline{n}=(n_1,n-k-n_1)$. We can consider two
different subcases:
\subsubsection{The case when $n_1\neq \frac{1}{2}(n-k)$.}We work again at the Quot scheme level. Using the notations of Section \ref{subsection-epsilon},
let $\mathcal{Q}_i$ be the corresponding Quot schemes, and
$R_i^{s}$ the open set of $\mathcal{Q}_i$ of stable points.
Let $f^s_i$ be the morphism from $R_i^{s}$ to
${\mathcal{M}}_i$.
In this situation, there exist universal bundles
$\mathcal{U}^{s}_i$ on $R^{s}_i\times X$.

In this case we only have two strata, these are $\mathscr{W}_{\mathcal{E}_{\underline{n}}}$ and $\mathscr{W}_{\mathcal{SE}_{\underline{n}}}$. We describe first $\mathscr{W}_{\mathcal{E}_{\underline{n}}}$. As we have done earlier, we are able to construct a
universal extension in the usual sense at the Quot scheme level.
To this end, consider the projections $q^s:R^s_1\times R^s_2\times
X \rightarrow R^s_1\times R^s_2$ and $p^s_i:R^s_1\times R^s_2
\rightarrow R^s_i$ for $i=1$, $2$. Let $\mathcal{H}^s$ be the
sheaf
\begin{equation*}\mathcal{R}^1q^s_{\ast}(\mathscr{H}om((p^s_2\times
id_X)^{\ast}\mathcal{U}^s_2,(p^s_1\times
id_X)^{\ast}\mathcal{U}^s_1)).
\end{equation*}Let $\mathbb{P}(\mathcal{H}^s)$ be the projectivization
of $\mathcal{H}^s$. Let $\pi^s_{P}:\mathbb{P}(\mathcal{H}^s)
\rightarrow \mathcal{U}^s_1\times \mathcal{U}^s_2$ and let
$p_{\mathbb{P}(\mathcal{H}^s)}:\mathbb{P}(\mathcal{H}^s) \times X
\rightarrow \mathbb{P}(\mathcal{H}^s)$ be the projection. We are
again in the hypotheses of Remark \ref{remark}, so there exists a
vector bundle $\mathcal{F}^s$ over
$\mathbb{P}(\mathcal{H}^s)\times X$ and an exact sequence
\begin{equation}\label{ex-dual}0\rightarrow (\pi^s_P \times id_X)^{\ast}(p^s_1 \times
id_X)^{\ast} \mathcal{U}^s_1 \otimes
p_{\mathbb{P}(\mathcal{H}^s)}^{\ast}\mathcal{O}_P(1)\rightarrow
\mathcal{F}^s \rightarrow (\pi^s_P \times id_X)^{\ast}(p^s_2
\times id_X)^{\ast} \mathcal{U}^s_2 \rightarrow 0,
\end{equation}which is universal in the sense of the projective version of Proposition
\ref{Ext-NR}.

Taking the
 dual of (\ref{ex-dual}):
\begin{equation*}0\rightarrow (\pi^s_P \times id_X)^{\ast}(p^s_2 \times
id_X)^{\ast} \mathcal{U}^{s \vee}_2\rightarrow \mathcal{F}^{s
\vee} \rightarrow (\pi^s_P \times id_X)^{\ast}(p^s_1 \times
id_X)^{\ast} \mathcal{U}^{s \vee}_1 \otimes
p_{\mathbb{P}(\mathcal{H}^s)}^{\ast}\mathcal{O}_P(-1)\rightarrow
0,
\end{equation*}and then $\mathcal{R}^i
p_{{\mathbb{P}(\mathcal{H}^s)}{\ast}}$ we have
\begin{align}\label{ext-direct}0\rightarrow \mathcal{R}^1
p_{{\mathbb{P}(\mathcal{H}^s)}{\ast}}&(\pi^s_P \times
id_X)^{\ast}(p^s_2 \times id_X)^{\ast} \mathcal{U}^{s \vee}_2
\rightarrow \mathcal{R}^1
p_{{\mathbb{P}(\mathcal{H}^s)}{\ast}}\mathcal{F}^{s\vee}
\rightarrow
\\& \rightarrow
\mathcal{R}^1 p_{{\mathbb{P}(\mathcal{H}^s)}{\ast}}(\pi^s_P \times
id_X)^{\ast}(p^s_1 \times id_X)^{\ast} \mathcal{U}^{s\vee}_1
\otimes \mathcal{O}_P(-1)\rightarrow 0.\nonumber
\end{align}To simplify this extension, we introduce the following diagram
\begin{equation*}\xymatrix{\mathbb{P}(\mathcal{H}^s)\times
X\ar[d]_{p_{\mathbb{P}(\mathcal{H}^s)}}\ar[rr]^{\pi^s_{P}\times
id_X} &&\mathcal{R}^s_1\times \mathcal{R}^s_2\times X \ar[d]^{q^s}
\ar[rr]^{p^s_i\times id_X}&&\mathcal{R}^s_i \times X\ar[d]^{\pi^s_i}\\
\mathbb{P}(\mathcal{H}^s)\ar[rr]^{\pi^s_{P}}
&&\mathcal{R}^s_1\times \mathcal{R}^s_2
\ar[rr]^{p^s_i}&&\mathcal{R}^s_i }
\end{equation*}Using again the base change formula we have
$$\mathcal{R}^1 p_{{\mathbb{P}(\mathcal{H}^s)}{\ast}}(\pi^s_P
\times id_X)^{\ast}(p^s_i \times id_X)^{\ast} \mathcal{U}^{s
\vee}_i\cong \pi^{s\ast}_P p^{s\ast}_i\mathcal{R}^1\pi^s_{i
\ast}\mathcal{U}^{s\vee}_i$$so the extension (\ref{ext-direct}) is
\begin{equation}\label{ext-direct-final}0\rightarrow \pi_P^{s\ast}p_2^{s\ast}\mathcal{R}^1
\pi^s_{2\ast}\mathcal{U}_2^{s\vee}\rightarrow \mathcal{R}^1
p_{{\mathbb{P}(\mathcal{H}^s)}{\ast}}\mathcal{F}^{s\vee}
\rightarrow
\pi_P^{\ast}p_1^{s\ast}\mathcal{R}^1\pi^s_{1\ast}\mathcal{U}_1^{s\vee}
\otimes \mathcal{O}_P(-1)\rightarrow 0.
\end{equation}
Consider now the set $W_{\mathcal{E}_{\underline{n}}}:= \{(e_1,
e_2, e)$ where $(e_1 ,e_2 )\in \mathcal{R}^s_1\times
\mathcal{R}^s_2$ and $e\in \mathbb{P}(\mathcal{H}^s_{(e_1
,e_2)})\}$. Consider the Grassmann bundle of $k$-planes of the
bundle $\mathcal{R}^1
p_{{\mathbb{P}(\mathcal{H}^s)}{\ast}}\mathcal{F}^{s\vee}$, let
$\Gr(k,\mathcal{R}^1
p_{{\mathbb{P}(\mathcal{H}^s)}{\ast}}\mathcal{F}^{s\vee})$. For
every point $w\in W_{\mathcal{E}_{\underline{n}}}$ we define the
following determinantal variety
\begin{equation*}V_w:=\big{\{} \pi \in \Gr(k,\mathcal{R}^1
p_{{\mathbb{P}(\mathcal{H}^s)}{\ast}}\mathcal{F}^{s\vee})_w :
\dim\big{(}\pi \cap (\pi_P^{s\ast}p_2^{s\ast}\mathcal{R}^1
\pi^s_{2\ast}\mathcal{U}^{s\vee}_2)_w\big{)}\geqslant k
(1-\frac{n_1}{n-k}) \big{\}}.
\end{equation*}

Let $V_{\mathcal{E}_{\underline{n}}}:=\coprod_{w\in
W_{\mathcal{E}_{\underline{n}}}} V_w \subseteq \coprod_{w\in
W_{\mathcal{E}_{\underline{n}}}} \Gr(k,\mathcal{R}^1
p_{{\mathbb{P}(\mathcal{H}^s)}{\ast}}\mathcal{F}^{s\vee})_w$, this
is a family of determinantal varieties.

Now, from the proofs of Proposition \ref{sequenceof projective} and Proposition
\ref{eandep} (i), we have that $\mathbb{P}(\mathcal{H}^s)/PGL(N_1)\times
PGL(N_2)$ is a projective fibration over $\mathcal{M}_1 \times
\mathcal{M}_2$. Because the scheme
$V_{\mathcal{E}_{\underline{n}}}$ is closed and invariant under
the action of $PGL(N_1)\times PGL(N_2)$, using Kempf's descent
Lemma, $V_{\mathcal{E}_{\underline{n}}}$ descends to a projective scheme over $\mathbb{P}(\mathcal{H}^s)/PGL(N_1)\times PGL(N_2)$,
which we call $\mathcal{V}_{\mathcal{E}_{\underline{n}}}$. If we
denote by $\mathcal{V}_{\mathcal{E}_{\underline{n}}}^c$ the
complement of $\mathcal{V}_{\mathcal{E}_{\underline{n}}}$ in
$$\coprod_{w\in W_{\mathcal{E}_{\underline{n}}}}
\Gr(k,\mathcal{R}^1
p_{{\mathbb{P}(\mathcal{H}^s)}{\ast}}\mathcal{F}^{s\vee})_w /
PGL(N_1)\times PGL(N_2),$$we have the following
\begin{theo}\label{deter}The stratum $\mathscr{W}_{\mathcal{E}_{\underline{n}}}$ is
identified with $\mathcal{V}_{\mathcal{E}_{\underline{n}}}^c$.
\end{theo}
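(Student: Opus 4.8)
The plan is to match each coherent system $(E,V)\in\mathscr{W}_{\mathcal{E}_{\underline{n}}}$ with the pair consisting of its BGN quotient $F\in\mathcal{E}_{\underline{n}}$ and the $k$-dimensional subspace $\Pi\subseteq H^1(F^{\ast})$ spanned by the components of its extension class, and then to recognise the $\alpha$-stability of $(E,V)$, via Theorem~\ref{bgn}, as precisely the condition which for this $\Pi$ puts the corresponding point outside the determinantal family $\mathcal{V}_{\mathcal{E}_{\underline{n}}}$.

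First I would set up the correspondence. Since $n_1\ne\tfrac{1}{2}(n-k)$, the two stable Jordan--H\"older quotients $Q_1,Q_2$ of any $F\in\mathcal{E}_{\underline{n}}$ have distinct ranks, so $Q_1\ncong Q_2$; combining this with $\textnormal{Hom}(Q_2,Q_1)=0$, $\textnormal{Hom}(Q_1,Q_2)=0$ and the non-splitness of $0\to Q_1\to F\to Q_2\to0$, a short diagram chase gives $\Aut(F)=\mathbb{C}^{\ast}$. Hence the extension class $(e_1,\dots,e_k)\in H^1(F^{\ast})^{\oplus k}$ attached to $(E,V)$ by Proposition~\ref{bgmn} is well defined up to the action of $GL(k)$ on $\mathcal{O}^{\oplus k}$ and of $\Aut(F)=\mathbb{C}^{\ast}$ on $H^1(F^{\ast})$, and neither action changes $\Pi:=\langle e_1,\dots,e_k\rangle$; conversely, since $H^0(F^{\ast})=0$, every $\Pi\in\Gr(k,H^1(F^{\ast}))$ satisfies condition (iv) of Definition~\ref{BGN} and yields a BGN extension, hence a coherent system of type $(n,d,k)$. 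This bijection is globalised by the objects already in play: over $\mathbb{P}(\mathcal{H}^s)$ the bundle $\mathcal{F}^s$ of (\ref{ex-dual}) is a universal such $F$, the bundle $\mathcal{R}^1p_{\mathbb{P}(\mathcal{H}^s)\ast}\mathcal{F}^{s\vee}$ has fibre $H^1(F^{\ast})$, and over its Grassmann bundle one builds a universal BGN extension of $\mathcal{F}^s$ by $\mathcal{O}^{\oplus k}$ using Remark~\ref{remark}(a) (here $\textnormal{Hom}(F,\mathcal{O}^{\oplus k})=H^0(F^{\ast})^{\oplus k}=0$). Descending by $PGL(N_1)\times PGL(N_2)$ with Kempf's lemma, exactly as in the paragraph before the statement, produces a morphism from the quotiented Grassmann bundle to $G_L(n,d,k)$ on the sublocus of $\alpha$-stable fibres, which I will show is inverse to the claimed identification.

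The crux is then the translation of $\alpha$-stability. I would first note that the only subbundle $F'\subset F$ with $\mu(F')=\mu(F)$ and $0<\rank F'<n-k$ is $F'=Q_1$: for such an $F'$ the composite $F'\hookrightarrow F\twoheadrightarrow Q_2$ is either zero, forcing $F'\subseteq Q_1$ and then $F'=Q_1$, or nonzero, in which case the stability of $Q_1,Q_2$ and the non-splitness (so $Q_2$ is not a subbundle of $F$) force $Q_1\subseteq F'$ and then $F'=F$, which is excluded. Hence in Theorem~\ref{bgn} only $F'=Q_1$, of rank $n_1$, is relevant, and repeating the argument in the proof of that theorem the least $k'$ for which a BGN subextension over $Q_1$ exists is $k''=\dim$ of the image of $\Pi$ under the surjection $H^1(F^{\ast})\twoheadrightarrow H^1(Q_1^{\ast})$. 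Therefore $(E,V)$ fails to be $\alpha$-stable iff $k/(n-k)\ge k''/n_1$, i.e.\ iff $k''\le kn_1/(n-k)$. Using the exact sequence $0\to H^1(Q_2^{\ast})\to H^1(F^{\ast})\to H^1(Q_1^{\ast})\to0$ --- the fibrewise form of (\ref{ext-direct-final}), left exact because $H^0(Q_1^{\ast})=0$ --- one has $k''=k-\dim(\Pi\cap H^1(Q_2^{\ast}))$, so this is equivalent to $\dim(\Pi\cap H^1(Q_2^{\ast}))\ge k(1-n_1/(n-k))$. Under the fibre identifications $\mathcal{R}^1p_{\mathbb{P}(\mathcal{H}^s)\ast}\mathcal{F}^{s\vee}\cong H^1(F^{\ast})$ and $\pi_P^{s\ast}p_2^{s\ast}\mathcal{R}^1\pi^s_{2\ast}\mathcal{U}^{s\vee}_2\cong H^1(Q_2^{\ast})$, this is exactly the condition defining $V_w$.

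Finally I would assemble the pieces: the points of the quotiented Grassmann bundle correspond bijectively to coherent systems of type $(n,d,k)$ with BGN quotient in $\mathcal{E}_{\underline{n}}$; by the above $\mathcal{V}_{\mathcal{E}_{\underline{n}}}$ is precisely the locus of those that are not $\alpha$-stable, so its complement $\mathcal{V}_{\mathcal{E}_{\underline{n}}}^{c}$ is in bijection with $\mathscr{W}_{\mathcal{E}_{\underline{n}}}$. Since each side is a locally closed subscheme --- of the Grassmann bundle quotient, respectively of $G_L(n,d,k)$ --- and the natural map, together with its inverse built from the universal BGN extension constructed above, is a morphism in each direction, it is an isomorphism. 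The genuine mathematical content is the stability criterion, which is immediate from Theorem~\ref{bgn}; the main (though essentially routine, given Section~\ref{subsection-epsilon}) obstacle will be the bookkeeping that the universal constructions descend compatibly under $PGL(N_1)\times PGL(N_2)$ and really define morphisms of schemes in both directions.
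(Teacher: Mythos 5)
Your proposal is correct and follows essentially the same route as the paper, whose own proof is a one-line deferral to the preceding construction together with Theorem~\ref{bgn}. You supply exactly the verification the paper leaves implicit: that the only slope-equal proper subbundle of $F\in\mathcal{E}_{\underline{n}}$ is $Q_1$, and that via the exact sequence $0\to H^1(Q_2^{\ast})\to H^1(F^{\ast})\to H^1(Q_1^{\ast})\to 0$ the failure of $\alpha$-stability translates into $\dim(\Pi\cap H^1(Q_2^{\ast}))\geq k(1-\frac{n_1}{n-k})$, which is precisely the defining condition of $V_w$.
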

\begin{proof}This follows from the previous construction and Theorem \ref{bgn}.
\end{proof}

\begin{remark}\label{nota}\textnormal{ Note that there is an action of the group of automorphisms of $\mathcal{F}^{s}_w$, we denote it by $\mathcal{G}_{\mathcal{E}_{\underline{n}}}$, on $(\mathcal{R}^1
p_{{\mathbb{P}(\mathcal{H}^s)}{\ast}}\mathcal{F}^{s\vee})_w$, that is reflected in $\mathcal{V}_{\mathcal{E}_{\underline{n}}}^c$. The stratum $\mathscr{W}_{\mathcal{E}_{\underline{n}}}$ is then identified to the quotient of $\mathcal{V}_{\mathcal{E}_{\underline{n}}}^c$ by $\mathcal{G}_{\mathcal{E}_{\underline{n}}}$, but in this case this quotient is equal to $\mathcal{V}_{\mathcal{E}_{\underline{n}}}^c$ since $\mathcal{G}_{\mathcal{E}_{\underline{n}}}$ equals $\mathbb{C}^{*}$. For the rest of the strata the corresponding groups of automorphisms are not trivial and need to be taken into account.}\end{remark}

Regarding the stratum $\mathscr{W}_{\mathcal{SE}_{\underline{n}}}$, one may consider the universal bundle $(p_1^s\times id_X)^{\ast}\mathcal{U}_1^s\oplus
(p_2^s\times id_X)^{\ast}\mathcal{U}_2^s $ over $R_1^s\times R_2^s \times X$.
Consider the Grassmann bundle of $k$-planes of the
bundle $\mathcal{R}^1q^s_{\ast}(
(p_1^s\times id_X)^{\ast}\mathcal{U}_1^{s\vee}\oplus
(p_2^s\times id_X)^{\ast}\mathcal{U}_2^{s\vee})$, let
$\Gr^{\mathcal{SE}_{\underline{n}}}:=\Gr(k,\mathcal{R}^1q^s_{\ast}(
(p_1^s\times id_X)^{\ast}\mathcal{U}_1^{s\vee}\oplus
(p_2^s\times id_X)^{\ast}\mathcal{U}_2^{s\vee}))$. For
every point $w=(r_1,r_2)\in R_1^s\times R_2^s$ we define the
determinantal varieties
\begin{equation*}V^1_w:=\big{\{} \pi \in \Gr^{\mathcal{SE}_{\underline{n}}}_w :
\dim\big{(}\pi \cap H^1(\mathcal{U}^{s\vee}_2|_{\{r_2\} \times X})\big{)}\geqslant k
(1-\frac{n_1}{n-k}) \big{\}},
\end{equation*}and\begin{equation*}V^2_w:=\big{\{} \pi \in \Gr^{\mathcal{SE}_{\underline{n}}}_w :
\dim\big{(}\pi \cap H^1(\mathcal{U}^{s\vee}_1|_{\{r_1\} \times X})\big{)}\geqslant k
(1-\frac{n-k-n_1}{n-k}) \big{\}}.
\end{equation*}
Let $V_{\mathcal{SE}_{\underline{n}}}:=\coprod_{w\in
R_1^s\times R_2^s} (V^1_w\cup V^2_w) \subseteq \coprod_{w\in
R_1^s\times R_2^s} \Gr^{\mathcal{SE}_{\underline{n}}}_w $, this
is again a family of determinantal varieties. Using a descent
argument, $V_{\mathcal{SE}_{\underline{n}}}$ descends to a closed scheme over $\mathcal{M}_1\times \mathcal{M}_2$,
which we call $\mathcal{V}_{\mathcal{SE}_{\underline{n}}}$. If we
denote by $\mathcal{V}_{\mathcal{SE}_{\underline{n}}}^c$ the
complement of $\mathcal{V}_{\mathcal{SE}_{\underline{n}}}$ in
$\coprod_{w\in R_1^s\times R_2^s}
\Gr^{\mathcal{SE}_{\underline{n}}}_w /
PGL(N_1)\times PGL(N_2)$. The group of automorphisms of $\big{(}(p_1^s\times id_X)^{\ast}\mathcal{U}_1^{s\vee}\oplus
(p_2^s\times id_X)^{\ast}\mathcal{U}_2^{s\vee}\big{)}_w$, we denote it by $\mathcal{G}_{\mathcal{SE}_{\underline{n}}}$, acts on $\big{(}\mathcal{R}^1q^s_{\ast}(
(p_1^s\times id_X)^{\ast}\mathcal{U}_1^{s\vee}\oplus
(p_2^s\times id_X)^{\ast}\mathcal{U}_2^{s\vee}))\big{)}_w$ and induces an action on $\mathcal{V}_{\mathcal{SE}_{\underline{n}}}^c$. Then
\begin{theo}\label{deter1}The stratum $\mathscr{W}_{\mathcal{SE}_{\underline{n}}}$ is
identified with  $\mathcal{V}_{\mathcal{SE}_{\underline{n}}}^c/\mathcal{G}_{\mathcal{SE}_{\underline{n}}}$.
\end{theo}


\subsubsection{The case when $n_1 =\frac{1}{2}(n-k)$.} For
$\mathcal{E}_{\underline{n}}$ and $\mathcal{SE}_{\underline{n}}$ the construction is the same as
before. For $\mathcal{E}'_{\underline{n}}$ and $\mathcal{SE}'_{\underline{n}}$, we only do the
construction in the case in which the invariants are coprime, the
remaining cases follow easily from the forthcoming construction
and the previous one.

We do first
$\mathscr{W}_{\mathcal{E}'_{\underline{n}}}$. We need a ``universal'' extension, but in
this case, as we saw in Paragraph \ref{otroparrafo}, it does not
exist. To solve this problem, in Theorem \ref{Universal} we proved
the existence of a family $(e_p)_{p\in P}$ of extensions of
${q'_P}^{\ast}(p_1\times id_X)^{\ast}\mathcal{P}$ by
${q'_P}^{\ast}(p_2\times id_X)^{\ast}\mathcal{P} \otimes
{p'_P}^{\ast}\mathcal{O}_P(1)$ over
$P=\mathbb{P}(\mathscr{E}xt^1_{q'} (\Delta'^{\ast} (p_2\times id_X
)^{\ast}\mathcal{P}, \Delta'^{\ast}(p_1\times id_X
)^{\ast}\mathcal{P})^{\ast})$ which is universal in the sense of Subsection
\ref{universal}.

This means that we have a local universal family of
extensions, instead of the universal extension that we were
allowed to construct in the case of the stratum induced by
$\mathcal{E}_{\underline{n}}$. We denote $\mathcal{M_1}=\mathcal{M}$. So as we did in the case of
$\mathcal{E}_{\underline{n}}$, we consider the set $W:=\{
(m,h):m\in \mathcal{M}$ and $h\in \mathbb{P}(\mathcal{R}^1
q'_{\ast}\Delta'^{\ast} \mathscr{H}om ( (p_2\times id_X
)^{\ast}\mathcal{P}, (p_1\times id_X )^{\ast}\mathcal{P})_m) \}$.
For every point $w=(m,h)\in W$ we have an extension
\begin{equation*}0\rightarrow F' \otimes \mathcal{O}_P(1)_m\rightarrow F \rightarrow
F'\rightarrow 0,
\end{equation*}taking the dual and cohomology, we get
\begin{equation*}0\rightarrow H^1(F'^{\vee})\rightarrow H^1(F^{\vee}) \rightarrow
H^1(F'^{\vee}) \otimes \mathcal{O}_P(-1)_m\rightarrow 0.
\end{equation*}

We define the following variety
\begin{equation*}V_w:=\big{\{} \pi \in \Gr(k,H^1(F^{\vee})) :
dim\big{(}\pi \cap H^1(F'^{\vee})\big{)} \geqslant \frac{k}{2}
\big{\}},
\end{equation*}and let $\mathcal{V}_{\mathcal{E}'_{\underline{n}}}:=\coprod_{w\in
W} V_w \subseteq \coprod_{w\in W} \Gr(k,H^1(F^{\vee}))$. Again, the group of automorphisms of $F$, $\mathcal{G}_{\mathcal{E}'_{\underline{n}}}=\Aut F$, acts on $H^1(F^{\vee})$ and from its induced action on $\mathcal{V}_{\mathcal{E}'_{\underline{n}}}^c$ we get
\begin{theo}\label{deter3}The stratum $\mathscr{W}_{\mathcal{E}'_{\underline{n}}}$ is
identified with $\mathcal{V}_{\mathcal{E}'_{\underline{n}}}^c/\mathcal{G}_{\mathcal{E}'_{\underline{n}}}$.
\end{theo}

Regarding the stratum
$\mathscr{W}_{\mathcal{SE}'_{\underline{n}}}$, we have the universal bundle $(p_1\times id_X)^{\ast}\mathcal{P}\oplus
(p_2\times id_X)^{\ast}\mathcal{P}$ over $\mathcal{M} \times X$. For every $m\in \mathcal{M}$, each $f\in \mathbb{P}^1$ defines a morphism of vector bundles
\begin{equation*}Q\xymatrix{\ar^f[r]&} Q \oplus Q,
\end{equation*}where $Q=\mathcal{P}|_{\{m\}\times X}$. Taking the dual and $H^i$ we get
$H^1(f^{\vee}): H^1(Q^{\vee}) \oplus H^1(Q^{\vee}) \rightarrow
H^1(Q^{\vee})$. We then define the following variety
\begin{equation*}V^f_m:=\big{\{} \pi \in \Gr(k,H^1(Q^{\vee}) \oplus H^1(Q^{\vee}) ) :
\dim \big{(}\pi \cap \ker H^1(f^{\vee})\big{)} \geqslant \frac{k}{2}
\big{\}},
\end{equation*}and let $\mathcal{V}_{\mathcal{SE}'_{\underline{n}}}:=\coprod_{(f,m)\in
\mathbb{P}^1\times \mathcal{M}} V^f_m \subseteq \coprod_{(f,m)\in
\mathbb{P}^1\times \mathcal{M}} \Gr(k,H^1(Q^{\vee})\oplus H^1(Q^{\vee}))$. The group of automorphisms of $Q\oplus Q$, $\mathcal{G}_{\mathcal{SE}'_{\underline{n}}}=\Aut (Q\oplus Q)$, acts on $H^1(Q^{\vee})\oplus H^1(Q^{\vee})$ and from its induced action on $\mathcal{V}_{\mathcal{SE}'_{\underline{n}}}^c$ one obtains
\begin{theo}\label{deter4}The stratum $\mathscr{W}_{\mathcal{SE}'_{\underline{n}}}$ is
identified with $\mathcal{V}_{\mathcal{SE}'_{\underline{n}}}^c/\mathcal{G}_{\mathcal{SE}'_{\underline{n}}}$.
\end{theo}

\begin{remark}\label{fibrationStrata}\textnormal{Using the construction above
we see that one may describe the varieties corresponding to our
strata at the Quot scheme level as locally trivial fiber bundles
(in the Zariski topology). For instance, if we look at the stratum
$\mathscr{W}_{\mathcal{E}_{\underline{n}}}$, this is isomorphic to
the descended variety corresponding to
$W_{\mathcal{E}_{\underline{n}}}$ by the morphism
$$f_1^s \times f_2^s :\mathcal{R}_1^s \times \mathcal{R}_1^s
\rightarrow \mathcal{M}_1 \times \mathcal{M}_2.$$The variety
$W_{\mathcal{E}_{\underline{n}}}$ is isomorphic to a locally
trivial fibration (in the Zariski topology) over the projective
fibration $\mathbb{P}(\mathcal{H}^s)$, over $\mathcal{R}^s_1
\times \mathcal{R}^s_2$ that appears in the proof of Proposition
\ref{sequenceof projective}. The fiber of our locally trivial
fiber bundle is ${V}^c={V}_w^c$, that is the complement of $V=V_w$
in $\Gr(k,\mathcal{R}^1
p_{{\mathbb{P}(\mathcal{H}^s)}{\ast}}\mathcal{F}^{s\vee})_w \cong
\Gr (k, d+(n-k)(g-1))$. Moreover, both fibrations are invariant
for the action of $PGL(N_1)\times PGL(N_2)$. For the rest of the
strata one gets the same sort of description. }
\end{remark}

\subsection{Irreducibility of the strata}

In this subsection we will prove that the strata we have defined
earlier in this paper are irreducible.

\begin{theo}\label{irre_smooth}The strata described in Definition \ref{strata} are irreducible.
\end{theo}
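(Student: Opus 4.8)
I want to prove that each stratum $\mathscr{W}_{\ast}$ from Definition \ref{strata} is irreducible. The key observation is that every stratum has been realized, via the constructions of Sections \ref{subsection-epsilon} and \ref{determinantal}, as a complement of a closed determinantal subvariety inside a fibration whose base is (a product of) moduli spaces of stable bundles and whose fibres are Grassmannians or open subsets thereof. Since $\mathcal{M}(n_i,d_i)$ is irreducible for each $i$ (a classical fact about moduli of stable bundles on a curve), and products, projective fibrations, Grassmann fibrations and open dense subsets of irreducible varieties are irreducible, the whole strategy reduces to: (1) exhibit each stratum as an open dense subset of a space built by such operations from the $\mathcal{M}_i$; (2) check the determinantal locus being removed is \emph{not} the whole ambient fibration, so that its complement is open \emph{dense}, hence irreducible.

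First I would treat the open stratum $\mathscr{W}^1 = g^{-1}(\mathcal{M}(n-k,d))$: it is the Grassmann fibration $\Gr(k, d+(n-k)(g-1))$ over $\mathcal{M}(n-k,d)$ by Proposition \ref{prop:restate}, hence irreducible. Next, for $r=2$ and $n_1\ne\frac12(n-k)$: by Remark \ref{fibrationStrata}, $\mathscr{W}_{\mathcal{E}_{\underline{n}}}$ is the $PGL(N_1)\times PGL(N_2)$-quotient of a locally trivial fibration $W_{\mathcal{E}_{\underline{n}}}^c$ over $\mathbb{P}(\mathcal{H}^s)$ with fibre $V^c \subseteq \Gr(k, d+(n-k)(g-1))$, the complement of a determinantal variety; and $\mathbb{P}(\mathcal{H}^s)$ descends to a projective bundle over $\mathcal{M}_1\times\mathcal{M}_2$ (Propositions \ref{sequenceof projective}, \ref{eandep}). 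The base $\mathcal{M}_1\times\mathcal{M}_2$ is irreducible; a projective bundle over it is irreducible; and the fibre $V^c$ is the complement in an irreducible Grassmannian of a proper closed subvariety — provided I verify $V_w\ne\Gr(k,\dots)_w$, i.e. that the generic $k$-plane does \emph{not} meet $(\pi_P^{s\ast}p_2^{s\ast}\mathcal{R}^1\pi^s_{2\ast}\mathcal{U}_2^{s\vee})_w$ in dimension $\ge k(1-\tfrac{n_1}{n-k})$. This is exactly the content of Theorem \ref{codimension-bad-theorem} / the remark following it: the "bad" locus has positive codimension, equivalently $V^c\ne\emptyset$; so $V^c$ is open dense in the irreducible Grassmannian, hence irreducible, and a locally trivial fibration with irreducible base and irreducible fibre is irreducible. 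The same argument, using Theorem \ref{deter1} and the corresponding codimension estimate, handles $\mathscr{W}_{\mathcal{SE}_{\underline{n}}}$, $\mathscr{W}_{\mathcal{E}_{\underline{n}(12)}}$, and (via Proposition \ref{eandep}(ii) and Theorems \ref{deter3} and its companion) the cases $n_1=\frac12(n-k)$ with the primed strata, where the base is instead $\mathcal{M}_1\times\mathcal{M}_1\setminus\Delta$ or $\mathcal{M}_1$ — still irreducible. For $r=3$, each set $\mathcal{S}_i^j\mathcal{E}_{\underline{n}}^{\ast}$ was likewise constructed in Section \ref{subsection-epsilon} as a tower of projective fibrations (or local-universal-family fibrations) over products of the $\mathcal{M}_i$ minus diagonals, and each corresponding stratum $\mathscr{W}$ is an open subset of such a tower intersected with the relevant Grassmann-bundle complement; I would run the same two-step argument, the only bookkeeping being that the split cases require factoring out a finite group $\mathbb{Z}/2$ or $S_3$, which preserves irreducibility since the quotient of an irreducible variety is irreducible.

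The main obstacle I anticipate is not conceptual but a matter of uniformity: one must check in \emph{each} of the numerous cases (the five "Sets", the three "Groups", and the rank-coincidence subcases $n_1\ne n_2\ne n_3$, $n_1=n_2\ne n_3$, $n_1=n_2=n_3$) that the determinantal locus removed is a proper subvariety, so that its complement is dense, and that the tower of fibrations really does have irreducible total space — the latter needs the non-emptiness of the generic fibre at every stage, which is where one invokes the dimension counts in Lemma \ref{Parameters}, Theorem \ref{codimension-bad-theorem}, and Proposition \ref{bgmn}. Once irreducibility of every constituent fibration and of every generic Grassmannian-complement fibre is in hand, irreducibility of the stratum follows formally. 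I would therefore organize the proof as a single lemma — "a locally trivial fibration (Zariski) with irreducible base and irreducible fibre is irreducible, and remains so after a good quotient by a connected group or a finite group" — and then verify its hypotheses case by case, citing Remark \ref{fibrationStrata} and the determinantal descriptions of Section \ref{determinantal} to reduce the $r=2$ cases and the explicit tower constructions to reduce the $r=3$ cases.
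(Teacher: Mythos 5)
Your proposal is correct and follows essentially the same route as the paper: exhibit each stratum as (the image of) an open subset of a projective/Grassmann fibration over products of the irreducible moduli spaces $\mathcal{M}_i$, and conclude by the standard facts that fibrations over irreducible bases with irreducible fibres, open non-empty subsets, images under morphisms, and finite-group quotients all preserve irreducibility. The only difference is one of emphasis — you explicitly flag the need to check that the removed determinantal locus is a proper subvariety (which the paper leaves implicit in its appeal to Theorem \ref{deter} and the codimension estimates) — but this is a refinement of the same argument, not a different one.
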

\begin{proof}The irreducibility condition for $\mathscr{W}^1$ comes directly from
Proposition \ref{prop:restate}. The argument we use to prove that
the rest of our strata are irreducible is the same for every
stratum so we prove it for the simplest case. In Proposition
\ref{eandep} (i) we proved that when $n_1\neq \frac{1}{2}(n-k)$
the space $\mathcal{E}_{\underline{n}}$ is isomorphic to a
projective bundle over $\mathcal{M}_1\times \mathcal{M}_2$ of
constant dimension. Now, because $\mathcal{M}_1\times
\mathcal{M}_2$ is irreducible, we have that
$\mathcal{E}_{\underline{n}}$ is also irreducible. By Theorem
\ref{deter} we have defined an open family of extensions within
$\mathcal{E}_{\underline{n}}$, hence this family is again
irreducible and maps into $G_L(n,d,k)$. Its image is irreducible
and is identified with
$\mathscr{W}_{\mathcal{E}_{\underline{n}}}$.
\end{proof}

\section{Hodge--Poincar\'e polynomials}

We use Deligne's extension of Hodge theory which applies to
varieties which are not necessarily compact, projective or smooth
(see \cite{D2}, \cite{D3} and \cite{D4}). We start by giving a
review of the notions of pure Hodge structure, mixed Hodge
structure, Hodge--Deligne and Hodge--Poincar\'e polynomials
under these general hypotheses.

\begin{definition}\label{pure}\textnormal{A \emph{pure Hodge structure of weight $m$} is given by
a finite dimensional $\mathbb{Q}$--vector space $H_{\mathbb{Q}}$
and a finite decreasing filtration $F^p$ of
$H=H_{\mathbb{Q}}\otimes \mathbb{C}$
$$H\supset \ldots \supset F^p \supset \ldots \supset ( 0 ) , $$
called \emph{the Hodge filtration}, such that $H=F^p\oplus
\overline{F^{m-p+1}}$ for all $p$. When $p+q=m$, if we set
$H^{p,q}=F^p \cap \overline{F^q}$, the condition $H=F^p\oplus
\overline{F^{m-p+1}}$ for all $p$ implies an equivalent definition
for a pure Hodge structure. That is, a decomposition
$$H=\bigoplus_{p+q=m}H^{p,q}$$
satisfying that $H^{p,q}=\overline{H^{q,p}}$, where
$\overline{H^{q,p}}$ is the complex conjugate of $H^{q,p}$. The
relation between the two equivalent definitions is the following:
Given a filtration $\{ F^p\}_p$ we obtain a decomposition by
considering $H^{p,q}=F^p\cap \overline{F^q}$. Given a
decomposition $\{ H^{p,q} \}_{p,q}$, this defines a filtration as
above by $F^{p}=\bigoplus_{i\geq p} H^{i,m-i}.$}
\end{definition}

The $n$-th cohomology group of a smooth projective variety $H^n(X)$
carries a pure Hodge structure of weight $n$. If
$\Omega_X^{\bullet}$ denote the complex of holomorphic
differential forms, and $(\Omega_X^{\bullet})^{\geq p}$ is the subcomplex
of forms of degree greater than or equal to $p$. Let $\mathbb{H}(X,\Omega_X^{\bullet})$ be the hypercohomology of the complex $\Omega_X^{\bullet}$, then one has that $H^n(X,\mathbb{C})=\mathbb{H}(X,\Omega_X^{\bullet})$.
The role of the
Hodge filtration is played here by the following filtration:
$$F^p=\im (\mathbb{H}^n(X,(\Omega_X^{\bullet})^{\geq p}) \rightarrow
\mathbb{H}^n(X,\Omega_X^{\bullet})).$$

\begin{parrafo}\textnormal{A morphism of Hodge structures is a map
$f_{\mathbb{Q}}:H_{\mathbb{Q}} \rightarrow H_{\mathbb{Q}}'$ such
that $f_{\mathbb{C}}(F^p H)\subset F^p H'$ for all $p$, where
$f_{\mathbb{C}}=f_{\mathbb{Q}}\otimes \mathbb{C}$ and $F^p H$ is
the $p$-th element in the Hodge filtration of $H$. When the Hodge
structures have the same weight, $f_{\mathbb{Q}}$ strictly
preserves the filtration, that is
$$\im (f_{\mathbb{C}})\cap F^p H' =f_{\mathbb{C}}(F^p H).$$
It is also known that for a given weight, the pure Hodge
structures form an abelian category.}\end{parrafo}

\begin{definition}\label{MHS}\textnormal{A \emph{mixed Hodge structure} consists
of a finite dimensional $\mathbb{Q}$-vector space
$H_{\mathbb{Q}}$, an increasing filtration $W_l$ of
$H_{\mathbb{Q}}$, called \emph{the weight filtration}$$\ldots
\subset W_l \subset \ldots \subset H_{\mathbb{Q}},$$and the Hodge
filtration $F^p$ of $H=H_{\mathbb{Q}}\otimes \mathbb{C}$, where
the filtrations $F^p Gr_l^W$ induced by $F^p$ on $$Gr_l^W=(W_l
H_{\mathbb{Q}} / W_{l-1}H_{\mathbb{Q}})\otimes \mathbb{C} = W_l H
/ W_{l-1}H$$ give a pure Hodge structure of weight $l$. Here $F^p
Gr_l^W$ is given by $$(W_lH\cap F^p +W_{l-1}H)/W_{l-1}H.$$}
\end{definition}

\begin{parrafo}\textnormal{A morphism of type $(r,r)$ between
mixed Hodge structures, $H_{\mathbb{Q}}$ with filtrations $W_m$
and $F^p$, and $H_{\mathbb{Q}}'$ with $W_l'$ and ${F'}^q$, is
given by a linear map $$L:H_{\mathbb{Q}} \rightarrow
H_{\mathbb{Q}}'$$ satisfying $L(W_m)\subset W'_{m+2r}$ and
$L(F^p)\subset F'^{p+r}$. Any such morphism is then strict in the
sense that $L(F^p)=F'^{p+r}\cap \im(L)$, and the same for the
weight filtration.}\end{parrafo}

\begin{definition}\textnormal{A morphism of type $(0,0)$ between
mixed Hodge structures, is called \emph{a morphism of mixed Hodge
structures}.}\end{definition}

Our main interests in this paper are the cohomology groups
$H^k(X,\mathbb{Q})$ of a complex variety $X$ which may be singular
and not projective. Deligne proved that these groups carry a mixed
Hodge structure (see \cite{D2}, \cite{D3} and \cite{D4}).
Associated to the Hodge filtration and the weight filtration we
can consider the quotients $Gr_l^W=W_l/W_{l-1}$ of Definition
\ref{MHS}, and for the Hodge filtration
$Gr_F^pGr_l^W=F^pGr_l^W/F^{p+1}Gr_l^W$. Deligne also proved that
the cohomology groups with compact support, we denote them by
$H^k_c(X)$, carry a mixed Hodge structure (see \cite{D2},
\cite{D3} and \cite{D4}). We can then define the Hodge--Deligne
numbers of $X$ as follows

\begin{definition}\textnormal{For a complex algebraic variety $X$, not
necessarily smooth, compact or irreducible, we define its
\emph{Hodge--Deligne numbers} as $$h^{p,q}(H_c^k(X))=\dim Gr_F^p
Gr_{p+q}^W H_c^k(X).$$}
\end{definition}

We may introduce the following Euler characteristic
\begin{equation}\label{ECC}\chi_{p,q}^c(X)=\sum_k(-1)^k
h^{p,q}(H_c^k(X)).\end{equation}We write $\chi_{p,q}(X)$ for the
Euler characteristic (\ref{ECC}) of $H^k(X)$. Then under the
hypothesis of $X$ being smooth of dimension $n$, Poincar\'{e}
duality tells us that $$\chi_{p,q}^c(X)=\chi_{n-p,n-q}(X).$$ We
are now ready to define the Hodge--Deligne polynomial.

\begin{definition}[\cite{DK}]\textnormal{For any complex algebraic variety $X$,
we define its \emph{Hodge--Deligne polynomial} (or virtual Hodge
polynomial) as
$$\mathcal{H}(X)(u,v)=\sum_{p,q}(-1)^{p+q}\chi_{p,q}^c(X)u^p v^q \in \mathbb{Z}[u,v].$$}
\end{definition}

Danilov and Khovanski\v{i} (\cite{DK}) observed that
$\mathcal{H}(X)(u,v)$ coincides with the classical Hodge
polynomial when $X$ is smooth and projective. Note that under
these hypotheses, the mixed Hodge structure on $H_c^k(X)$ is pure
of weight $k$, so
\begin{equation*}
Gr_m^W H^k_c(X)=\left \{ \begin{array}{l@{}l} H^k(X) \textnormal{$
$ $ $ if $ $ $m=k$.}
\\ 0 \textnormal{$ $ $ $ if $ $ $m\neq k$.}
\end{array} \right .
\end{equation*}
Then
\begin{equation}\label{UHP}\mathcal{H}(X)(u,v)=\sum_{p,q}h^{p,q}(X)u^p v^q,
\end{equation}where $h^{p,q}(X)=h^{p,q}(H^{p+q}(X))$ are the classical Hodge numbers
of $X$ and (\ref{UHP}) the classical Hodge polynomial.

We may define another polynomial using the Euler characteristic
$\chi_{p,q}(X)$ for rational cohomology groups without compact
support. As we have already said Deligne proved that these groups
carry a mixed Hodge structure with the usual given associated
filtrations.

\begin{definition}\textnormal{For a complex algebraic variety $X$, not
necessarily smooth, compact or irreducible, we define its
\emph{Hodge--Poincar\'{e} numbers} as $$h^{p,q}(H^k(X))=\dim
Gr_F^p Gr_{p+q}^W H^k(X).$$}
\end{definition}

We are ready now to define the Hodge--Poincar\'{e} polynomial.

\begin{definition}\textnormal{For any complex algebraic variety $X$,
we define its \emph{Hodge--Poincar\'{e} polynomial} as
$$HP(X)(u,v)=\sum_{p,q}(-1)^{p+q}\chi_{p,q}(X)u^p v^q =\sum_{p,q,k}(-1)^{p+q+k}h^{p,q}(H^k(X))u^p v^q .$$}
\end{definition}

\begin{remark} \label{HPtoHD}\textnormal{When our algebraic variety $X$ is smooth, Poincar\'{e} duality gives us the
following functional identity relating Hodge--Deligne and
Hodge--Poincar\'{e} polynomials
\begin{equation}\label{identityDP}\mathcal{H}(X)(u,v)=(uv)^{\dim_{\mathbb{C}}
X}\cdot HP(X)(u^{-1},v^{-1})
\end{equation}where $\dim_{\mathbb{C}}
X$ denotes the complex dimension of $X$.}

\textnormal{Let $b^k(X)=\dim H^k(X)$ be the $k$--Betti number of
the variety $X$ and let $P_X(t)=\sum_k b^k(X)t^k$ be its
Poincar\'e polynomial. If $X$ is not only smooth, but also
projective, the Betti numbers of $X$ satisfy
\begin{equation}\label{betti}b^k(X)=\sum_{p+q=k}h^{p,q}(H^k(X))\end{equation}so
that
\begin{equation}\label{identityDP2}P_X(t)=\sum_k b^k(X)t^k
=\mathcal{H}(X)(t,t)=HP(X)(t,t).\end{equation}}
\end{remark}

Hodge--Deligne polynomials are very useful because of their rather
nice properties. Now we introduce some results that will be quite
helpful to do our computations. In \cite{Du} Durfee proved that if
$X=\cup_i X_i$ and $Y=\cup_i Y_i$ are smooth projective varieties
that are disjoint unions of locally closed subvarieties, such that
$X_i \cong Y_i$ for all $i$, then $X$ and $Y$ have the same Betti
numbers. Using the properties of Hodge--Deligne polynomials, in
particular their relation with virtual Poincar\'{e} polynomials,
one may prove that this is also true for the Hodge numbers of $X$
and $Y$. Here we are using the following extension of Durfee's result

\begin{theo}[\cite{MOV1}, Theorem 2.2]\label{Theorem2.2}Let $X$ be a complex
variety. Suppose that $X$ is a finite disjoint union $X=\cup_i
X_i$, where $X_i$ are locally closed subvarieties. Then
$$\mathcal{H}(X)(u,v)=\sum_i\mathcal{H}(X_i)(u,v).$$
\end{theo}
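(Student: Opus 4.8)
The plan is to prove the additivity of the Hodge--Deligne polynomial in two stages: first the elementary case of a closed subvariety together with its open complement, and then the general finite stratification by induction on the number of strata.

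For the first stage, I would take $Z\subseteq X$ closed with open complement $U=X\setminus Z$, and invoke the long exact sequence in cohomology with compact support
\[
\cdots \to H^k_c(U)\to H^k_c(X)\to H^k_c(Z)\to H^{k+1}_c(U)\to\cdots,
\]
all of whose maps are, by Deligne's theory, morphisms of mixed Hodge structures. Since a morphism of mixed Hodge structures is strict for both the weight and the Hodge filtrations, the functor $Gr_F^p\,Gr^W_{p+q}(-)$ is exact on mixed Hodge structures; applying it to the sequence above produces, for each $(p,q)$, a long exact sequence of finite--dimensional vector spaces, whose alternating dimension count gives $\chi^c_{p,q}(X)=\chi^c_{p,q}(U)+\chi^c_{p,q}(Z)$. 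Multiplying by $(-1)^{p+q}u^pv^q$ and summing over $p,q$ then yields $\mathcal{H}(X)=\mathcal{H}(U)+\mathcal{H}(Z)$.

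For the second stage, given $X=X_1\sqcup\cdots\sqcup X_n$ with each $X_i$ locally closed, I would induct on $n$ (the case $n=1$ being trivial). Setting $C=\overline{X_1}$ (the closure of $X_1$ in $X$, which is closed) and $V=X\setminus C$ (open), the first stage gives $\mathcal{H}(X)=\mathcal{H}(C)+\mathcal{H}(V)$. Inside $C$ the subvariety $X_1$ is open, so $\mathcal{H}(C)=\mathcal{H}(X_1)+\mathcal{H}(C\setminus X_1)$; and $C\setminus X_1=\bigsqcup_{i\ge 2}(X_i\cap C)$ and $V=\bigsqcup_{i\ge 2}(X_i\cap V)$ are disjoint unions of $n-1$ locally closed subvarieties, so the inductive hypothesis computes $\mathcal{H}(C\setminus X_1)$ and $\mathcal{H}(V)$ as the corresponding sums. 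Finally, for each $i\ge 2$ the splitting $X_i=(X_i\cap C)\sqcup(X_i\cap V)$ is again of the closed/open type, so $\mathcal{H}(X_i)=\mathcal{H}(X_i\cap C)+\mathcal{H}(X_i\cap V)$, and assembling these identities yields $\mathcal{H}(X)=\mathcal{H}(X_1)+\sum_{i\ge 2}\mathcal{H}(X_i)$. Throughout, every set that appears is itself a variety (a locally closed subscheme of $X$), which is what makes the induction clean.

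The hard part is entirely concentrated in the first stage, and it is not a computation but an appeal to Deligne's mixed Hodge theory: one needs that the compactly supported long exact sequence is a sequence of morphisms of mixed Hodge structures, and that such morphisms are strictly compatible with both filtrations, so that passing to the $Gr_F^p\,Gr^W_{p+q}$ pieces preserves exactness. Granting these standard facts, everything else is bookkeeping; a variant of the second stage could instead use the relation $\mathcal{H}(Y)=\mathcal{H}(\overline{Y})-\mathcal{H}(\overline{Y}\setminus Y)$ to reduce to closed subvarieties, but the induction above sidesteps any question of how $\mathcal{H}$ behaves on non--locally--closed constructible sets.
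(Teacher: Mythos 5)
Your proof is correct, and it is the standard argument: the paper itself states this result as an import from \cite{MOV1} (ultimately going back to Danilov--Khovanski\v{\i}) and gives no proof of its own, and the cited proof proceeds exactly as you do, via the compactly supported long exact sequence of a closed/open pair, strictness of morphisms of mixed Hodge structures (hence exactness of $Gr_F^p\,Gr^W_{p+q}$), and induction on the number of strata using that a locally closed $X_1$ is open in its closure. Nothing is missing.
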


Another result from \cite{MOV1} that will be useful for our
computations when we are dealing with fibrations is
\begin{lemma}[\cite{MOV1}, Lemma 2.3]\label{lema2.3}Suppose that $\pi :X\rightarrow Y$
is an algebraic fiber bundle with fiber $F$ which is locally
trivial in the Zariski topology, then
$$\mathcal{H}(X)(u,v)=\mathcal{H}(F)(u,v)\cdot\mathcal{H}(Y)(u,v).$$
\end{lemma}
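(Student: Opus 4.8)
The plan is to deduce the statement from two ingredients: the additivity of the Hodge--Deligne polynomial, which is precisely Theorem \ref{Theorem2.2}, together with the multiplicativity of $\mathcal{H}$ on a direct product. For the product formula I would first record that, for any complex algebraic varieties $A$ and $B$, the K\"unneth isomorphism for cohomology with compact supports, $H^m_c(A\times B)\cong\bigoplus_{i+j=m}H^i_c(A)\otimes H^j_c(B)$, is an isomorphism of mixed Hodge structures (Deligne). Hence $h^{p,q}(H^m_c(A\times B))=\sum_{i+j=m}\sum_{p_1+p_2=p,\,q_1+q_2=q}h^{p_1,q_1}(H^i_c(A))\,h^{p_2,q_2}(H^j_c(B))$. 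Multiplying by $(-1)^{p+q}u^pv^q$, summing over all indices, and using $(-1)^m=(-1)^i(-1)^j$ and $(-1)^{p+q}=(-1)^{p_1+q_1}(-1)^{p_2+q_2}$, the double sum factors and one obtains
\begin{equation*}
\mathcal{H}(A\times B)(u,v)=\mathcal{H}(A)(u,v)\cdot\mathcal{H}(B)(u,v).
\end{equation*}

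Next I would pass from the hypothesis that $\pi:X\to Y$ is locally trivial in the Zariski topology to a finite trivialising open cover: since $Y$ is a variety it is quasi-compact, so there are finitely many opens $U_1,\dots,U_m$ covering $Y$ together with isomorphisms $\pi^{-1}(U_a)\cong U_a\times F$ commuting with the projections to $U_a$. I would then refine this cover into a finite partition of $Y$ into locally closed subvarieties by setting $S_1=U_1$ and $S_a=U_a\setminus(U_1\cup\dots\cup U_{a-1})$ for $a\geq 2$, so that $Y=\bigsqcup_{a=1}^m S_a$ with each $S_a$ locally closed and $S_a\subseteq U_a$. Because $\pi$ is a morphism, each $\pi^{-1}(S_a)$ is a locally closed subvariety of $X$, these pieces partition $X$, and restricting the trivialisation over $U_a$ yields $\pi^{-1}(S_a)\cong S_a\times F$.

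Finally I would apply Theorem \ref{Theorem2.2} to the partition $X=\bigsqcup_a\pi^{-1}(S_a)$, then the product formula to each term, and then Theorem \ref{Theorem2.2} once more to the partition $Y=\bigsqcup_a S_a$:
\begin{equation*}
\mathcal{H}(X)=\sum_{a=1}^m\mathcal{H}\bigl(\pi^{-1}(S_a)\bigr)=\sum_{a=1}^m\mathcal{H}(S_a\times F)=\mathcal{H}(F)\cdot\sum_{a=1}^m\mathcal{H}(S_a)=\mathcal{H}(F)\cdot\mathcal{H}(Y),
\end{equation*}
the cases $F=\emptyset$ or $Y=\emptyset$ being trivial since then $X=\emptyset$ and all terms vanish. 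Everything after the product formula is pure bookkeeping with the additivity supplied by Theorem \ref{Theorem2.2}; the one genuinely non-formal step, and hence the point to be careful about, is the compatibility of the K\"unneth isomorphism with the mixed Hodge structures on compactly supported cohomology, which is exactly what makes $\mathcal{H}$ multiplicative on products.
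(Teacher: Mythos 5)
Your proof is correct. Note that the paper offers no proof of its own for this statement --- it is imported verbatim from \cite{MOV1} (Lemma 2.3 there) --- and your argument (a finite Zariski-trivialising cover of the quasi-compact base, refined to a partition $Y=\bigsqcup_a S_a$ into locally closed pieces over each of which the bundle is a product, followed by additivity from Theorem \ref{Theorem2.2} and multiplicativity on products via the K\"unneth isomorphism of mixed Hodge structures on compactly supported cohomology) is exactly the standard one, essentially the proof given in the cited reference.
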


In this paper we consider varieties acted on by algebraic groups.
Then, we need a cohomology theory that captures all the
information given by the action of the group. Namely
equivariant cohomology. Hodge--Poincar\'{e} polynomials can
be extended to analogous polynomials for equivariant cohomology
groups. We shall call this new series the equivariant
Hodge--Poincar\'{e} series.

If $X$ is an algebraic variety acted on by a group $G$, consider
$EG \rightarrow BG$ a universal classifying bundle for $G$, where
$BG=EG/G$ is the \emph{classifying space} of $G$ and $EG$ is the
\emph{total space} of $G$. We form the space $X\times_G EG$ which
is defined to be the quotient space of $X\times EG$ by the
equivalence relation $(x,e\cdot g)\sim (g\cdot x,e)$. Then, the
\emph{equivariant cohomology ring} of $X$ is the following
$$H^{\ast}_G(X)=H^{\ast}(X\times_G EG).$$ Although $EG$ and $BG$
are not finite-dimensional manifolds, there are natural Hodge
structures on their cohomology. This is trivial in the case of
$EG$. Deligne proved that there is a pure Hodge structure on
$H^{\ast}(BG)$ and that $H^{p,q}(H^{\ast}(BG))=0$ for $p\neq q$
(see \cite{D4} \S 9). We may regard $EG$ and $BG$ as increasing
unions of finite-dimensional varieties $(EG)_m$ and $(BG)_m$ for
$m\geq 1$ such that $G$ acts freely on $(EG)_m$ with
$(EG)_m/G=(BG)_m$ and the inclusions of $(EG)_m$ and $(BG)_m$ in
$EG$ and $BG$ respectively induce isomorphisms of cohomology in
degree less than $m$ which preserve the Hodge structures. In the
same way $X\times_G EG$ is the union of finite-dimensional
varieties whose natural mixed Hodge structures induce a natural
mixed Hodge structure on $H^n (X\times_G EG)$. Using that we have
the following

\begin{definition}\textnormal{We define the \emph{equivariant Hodge--Poincar\'{e} numbers}
of $X$ as $$h^{p,q;n}_G(X)=h^{p,q}(H^n (X\times_G
EG)).$$}\end{definition}

We are ready now to define the equivariant Hodge--Poincar\'{e}
series.

\begin{definition}\textnormal{For any complex algebraic variety $X$ acted on by an algebraic group $G$,
we define its \emph{equivariant Hodge--Poincar\'{e} series} as
$$HP_G(X)(u,v)=\sum_{p,q,k}(-1)^{p+q+k}h_G^{p,q;k}(X)u^p v^q .$$}
\end{definition}

\begin{parrafo}\label{parraequicoho}\textnormal{Suppose now that $G$ is connected. The relationship
between cohomology and equivariant cohomology is accounted for by
a Leray spectral sequence for the
fibration\begin{equation}\label{FEC}X\times_G EG \rightarrow BG
\end{equation}whose fiber is $X$. The $E_2$-term of this spectral sequence is given
by $E_2^{p,q}=H^p(X)\otimes H^q(BG)$ which abuts to
$H^{p+q}_G(X)$. This spectral sequence preserves Hodge structures.}

\textnormal{If $X$ is a nonsingular projective variety that is acted on linearly by a connected complex reductive group $G$, one has that the fibration
(\ref{FEC}) is cohomologically trivial over $\mathbb{Q}$ (see
\cite{K1} Proposition 5.8). Then
\begin{equation}\label{cohoeq}H^{\ast}_G(X)\cong H^{\ast}(X)\otimes
H^{\ast}(BG).\end{equation}This isomorphism is actually an
isomorphism of mixed Hodge structures (\cite{D4} Proposition 8.2.10).}

\textnormal{We have another fibration, that is $$X\times_G EG
\rightarrow X/G$$with fiber $EG$. When $G$ acts freely on $X$,
that is the stabilizer of every point is trivial, then it induces
the isomorphism\begin{equation}\label{quotient}H^{\ast}(X\times_G
EG)\cong H^{\ast}(X/G).\end{equation}Hence, if $X$ is finite-dimensional and $G$ acts freely on it, ${HP}_G(X)(u,v)$ is a polynomial.}\end{parrafo}

We need the following result from \cite{GM} for future
computations.
\begin{lemma}\label{lemmafibrationEHP}Let $Y\rightarrow Z$ be
a locally trivial fibration in the Zariski topology with fibre
$F$, and such that it is compatible with respect to the action of
the group $G$ that acts on $Y$ and $Z$ respectively. Assume that $Y$ and $Z$ are smooth varieties. Then
$$HP_G(Y)(u,v)=HP_G(Z)(u,v)\cdot HP(F)(u,v).$$\end{lemma}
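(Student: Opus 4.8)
The plan is to reduce this equivariant statement to the non-equivariant multiplicativity of Lemma \ref{lema2.3}, by passing to the Borel construction and to its finite-dimensional approximations. First I would fix approximations $(EG)_m\rightarrow(BG)_m$ as in \ref{parraequicoho}, with $G$ acting freely on $(EG)_m$, $(EG)_m/G=(BG)_m$, and $(EG)_m\hookrightarrow EG$ inducing isomorphisms of mixed Hodge structures on cohomology in degrees $<m$. Set $Y_m:=Y\times_G(EG)_m$ and $Z_m:=Z\times_G(EG)_m$; since $Y$ and $Z$ are smooth and $G$ acts freely on $(EG)_m$, the products $Y\times(EG)_m$ and $Z\times(EG)_m$ are smooth with free $G$-action, so $Y_m$ and $Z_m$ are smooth varieties. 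Comparing the Leray spectral sequences of the fibrations $Y\times_G EG\rightarrow BG$ and $Y_m\rightarrow(BG)_m$ (and similarly for $Z$), which are compatible with mixed Hodge structures, one gets isomorphisms $H^n(Y\times_G EG)\cong H^n(Y_m)$ and $H^n(Z\times_G EG)\cong H^n(Z_m)$ of mixed Hodge structures for $n<m$. Hence the coefficient of each monomial $u^pv^q$ in $HP_G(Y)$, resp. $HP_G(Z)$, agrees, once $m$ is large, with the corresponding coefficient of $HP(Y_m)$, resp. $HP(Z_m)$, and it suffices to prove $HP(Y_m)=HP(Z_m)\cdot HP(F)$ for every $m$.

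Next I would verify that the map $\pi_m\colon Y_m\rightarrow Z_m$ induced by $\pi\colon Y\rightarrow Z$ is again an algebraic fibre bundle with fibre $F$, locally trivial in the Zariski topology. This is the point at which the hypothesis that $\pi$ is compatible with the $G$-actions is used: taking a $G$-invariant Zariski-open cover $\{U_\alpha\}$ of $Z$ over which $\pi$ trivializes $G$-equivariantly, $\pi^{-1}(U_\alpha)\cong U_\alpha\times F$ with $G$ acting through the first factor, one obtains $\pi^{-1}(U_\alpha)\times_G(EG)_m\cong\bigl(U_\alpha\times_G(EG)_m\bigr)\times F$, while $\{U_\alpha\times_G(EG)_m\}$ is a Zariski-open cover of $Z_m$. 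In particular $F$ is smooth, being a Zariski-local direct factor of the smooth variety $Y_m$.

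Then the non-equivariant input applies: by Lemma \ref{lema2.3}, $\mathcal{H}(Y_m)=\mathcal{H}(F)\cdot\mathcal{H}(Z_m)$, and since $\dim Y_m=\dim Z_m+\dim F$ while $Y_m$, $Z_m$, $F$ are all smooth, substituting this into the identity $\mathcal{H}(X)(u,v)=(uv)^{\dim_{\mathbb{C}}X}HP(X)(u^{-1},v^{-1})$ of Remark \ref{HPtoHD} yields $HP(Y_m)(u,v)=HP(F)(u,v)\cdot HP(Z_m)(u,v)$. Letting $m\rightarrow\infty$ and comparing coefficients degree by degree gives $HP_G(Y)=HP_G(Z)\cdot HP(F)$.

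The step I expect to be the main obstacle is the second one: pinning down the precise meaning of ``compatible'' — a $G$-invariant trivializing cover, with $G$ acting trivially on the fibre $F$ of each chart — so that the Borel construction $\pi_m$ remains Zariski-locally trivial with the \emph{same} fibre $F$; one must also be slightly careful that $HP_G$ is a priori a formal series, so the chain of equalities should be read coefficientwise. I would remark that in the applications in this paper $G$ always acts freely (for instance $PGL(N_1)\times PGL(N_2)$ on $R_1^s\times R_2^s$), so that $HP_G(Y)=HP(Y/G)$ and $HP_G(Z)=HP(Z/G)$, and the lemma then follows more directly by applying the smooth case of Lemma \ref{lema2.3} together with Remark \ref{HPtoHD} to the Zariski-locally trivial fibration $Y/G\rightarrow Z/G$, whose fibre is again $F$.
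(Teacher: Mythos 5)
The paper does not actually prove this lemma: it is imported verbatim from the author's companion preprint \cite{GM} (``We need the following result from \cite{GM} for future computations''), so there is no in-text proof to compare yours against. Judged on its own, your argument is sound and is the natural one: approximate the Borel construction by the finite-dimensional smooth varieties $Y_m=Y\times_G(EG)_m$ and $Z_m=Z\times_G(EG)_m$ exactly as in \ref{parraequicoho} (which is how the paper defines the mixed Hodge structures on $H^*_G$ in the first place), apply the motivic multiplicativity of Lemma \ref{lema2.3} to $Y_m\rightarrow Z_m$, and convert $\mathcal{H}$ to $HP$ via the Poincar\'e-duality identity of Remark \ref{HPtoHD}, using $\dim Y_m=\dim Z_m+\dim F$ and the smoothness of $Y_m$, $Z_m$ and $F$. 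Your observation that each coefficient of $HP_G$ is computed by finitely many $k$ (so the series is well defined and can be compared coefficientwise with $HP(Y_m)$ for $m\gg0$) is the right bookkeeping.

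The one step that is not a formality is the one you yourself flag: for $Y_m\rightarrow Z_m$ (equivalently, in the free case, for $Y/G\rightarrow Z/G$) to remain Zariski-locally trivial with fibre $F$, mere $G$-equivariance of $\pi$ is not enough — a trivializing cover of $Z$ need not be $G$-invariant, and its members do not descend to open sets of $Z_m$. You resolve this by reading ``compatible'' as the existence of a $G$-equivariant trivializing cover with $G$ acting through the base; this is a genuine strengthening of the stated hypothesis, but it is consistent with how the lemma is used in the paper, where the relevant fibrations are projective and Grassmann bundles of $G$-linearized sheaves and the local triviality of the descended fibration is in any case secured separately via Kempf's descent lemma. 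So I would not call this a gap in your argument so much as an imprecision in the lemma's statement that any proof must confront; your proof is correct once that hypothesis is made explicit.
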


We are ready now to compute the Hodge--Deligne polynomials of our
strata. In the rest of the section we will describe how we can do
it for the case in which we have two components in the type we use
to define the stratification. When
${\underline{n}}=(n_1,n-k-n_1)$, we proved that the stratum can be
described as a complement of a determinantal variety. Our strategy
could be understood by looking at what happens at the stratum
$\mathscr{W}_{\mathcal{E}_{\underline{n}}}$ when $n_1 \neq
\frac{1}{2}(n-k)$. The remaining cases are analogous.

\begin{theo}\label{HodgeStratum}Using the notations of Subsection
\ref{determinantal}, the stratum
$\mathscr{W}_{\mathcal{E}_{\underline{n}}}$ for the type
$\underline{n}=(n_1,n-k-n_1)$ has the following Hodge--Poincar\'e
polynomial
\begin{align*}{HP}(\mathscr{W}_{\mathcal{E}_{\underline{n}}})(u,v)&=
{HP}(\mathcal{M}(n_1,d_1))(u,v)\cdot
{HP}(\mathcal{M}(n-k-n_1,d-d_1))(u,v) \\& \cdot
\frac{1-(uv)^{n_1\cdot (n-k-n_1)\cdot (g-1)}}{1-uv} \cdot\bigg{[}
\frac{(1-(uv)^{N-k+1})\cdot \ldots \cdot (1-(uv)^{N})}{(1-uv)\cdot
\ldots \cdot (1-(uv)^{k})}- \\& -\sum_{\mu =\lceil k (1-\frac{n_1}{n-k})\rceil }^{\min \{ k,j
\}}(uv)^{\mu(N-k-j+{\mu})}\cdot \frac{(1-(uv)^{N-j-k+\mu+1})\cdot \ldots \cdot
(1-(uv)^{N-j})}{(1-uv)\cdot \ldots \cdot (1-(uv)^{k-\mu})} \cdot \\& \cdot \frac{(1-(uv)^{j-\mu+1})\cdot \ldots \cdot
(1-(uv)^{j})}{(1-uv)\cdot \ldots \cdot (1-(uv)^{\mu})} \bigg{]},
\end{align*}where $N=d+(n-k)(g-1)$ and
$j=d-d_1+(n-k-n_1)(g-1)$. The numbers $d_1$ and $d-d_1$ must
satisfy the following identity:
$$\frac{d_1}{n_1}=\frac{d-d_1}{n-k-n_1}.$$\end{theo}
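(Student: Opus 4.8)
The plan is to compute the Hodge--Deligne polynomial of the stratum by exploiting the three-layer fibration structure built up in Subsection \ref{determinantal}, and then pass to the Hodge--Poincaré polynomial via the smoothness of the stratum (Theorem \ref{irre_smooth}) and the identity (\ref{identityDP}) in Remark \ref{HPtoHD}. Recall from Theorem \ref{deter} and Remark \ref{fibrationStrata} that $\mathscr{W}_{\mathcal{E}_{\underline{n}}}$ is the descended variety of $W_{\mathcal{E}_{\underline{n}}}^c$, and the latter is a locally trivial fibration (Zariski) over the projective bundle $\mathbb{P}(\mathcal{H}^s)$, which in turn fibers over $R_1^s\times R_2^s$; the fiber of the top map is $V_w^c$, the complement inside $\Gr(k,d+(n-k)(g-1))$ of the determinantal variety $V_w$ cut out by the sub-bundle $(\pi_P^{s\ast}p_2^{s\ast}\mathcal{R}^1\pi^s_{2\ast}\mathcal{U}_2^{s\vee})_w$ of dimension $j=d-d_1+(n-k-n_1)(g-1)$. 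Everything in sight is $PGL(N_1)\times PGL(N_2)$-equivariant, with the action free on the relevant (stable) loci, so by Lemma \ref{lemmafibrationEHP} and the multiplicativity (\ref{quotient}) of equivariant Hodge--Poincaré polynomials under free quotients, the polynomial of $\mathscr{W}_{\mathcal{E}_{\underline{n}}}$ is the product of: the polynomial of $\mathcal{M}(n_1,d_1)\times\mathcal{M}(n-k-n_1,d-d_1)$, the polynomial of the projective-space fiber $\mathbb{P}^{n_1(n-k-n_1)(g-1)-1}$ of $\mathcal{E}_{\underline{n}}\to\mathcal{M}_1\times\mathcal{M}_2$ (Proposition \ref{eandep}(i)), which contributes $\frac{1-(uv)^{n_1(n-k-n_1)(g-1)}}{1-uv}$, and the polynomial of $\Gr(k,N)\setminus V_w$.

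\textbf{Main computation: the Grassmannian minus a determinantal variety.} The key step is to compute $\mathcal{H}(\Gr(k,N)\setminus V)(u,v)$, where $N=d+(n-k)(g-1)$, $V=\{\pi\in\Gr(k,N):\dim(\pi\cap W_0)\ge k(1-\tfrac{n_1}{n-k})\}$ for a fixed subspace $W_0$ of dimension $j$. By Theorem \ref{Theorem2.2} it suffices to compute $\mathcal{H}(\Gr(k,N))$ and subtract $\mathcal{H}(V)$; but in fact the cleaner route is to stratify $V$ itself by the value $\mu=\dim(\pi\cap W_0)$, for $\lceil k(1-\tfrac{n_1}{n-k})\rceil\le\mu\le\min\{k,j\}$. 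Each such stratum fibers over $\Gr(\mu,j)$ (choice of $\pi\cap W_0$) with fiber $\{\pi/(\pi\cap W_0)\hookrightarrow \mathbb{C}^N/W_0\}$, i.e. a Grassmannian $\Gr(k-\mu,N-j)$, but only the open part where the intersection with $W_0$ stays exactly $\mu$-dimensional --- equivalently, after the standard row-reduction picture, an affine bundle of rank $\mu(N-j-k+\mu)$ over $\Gr(\mu,j)\times\Gr(k-\mu,N-j)$. Since all these varieties have only $(p,p)$-classes (iterated cell decompositions), their Hodge--Deligne polynomials are obtained from the classical Poincaré polynomials of Grassmannians with $t^2\mapsto uv$, and the affine factor contributes $(uv)^{\mu(N-j-k+\mu)}$. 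Summing over $\mu$ and using $\mathcal{H}(\Gr(k,N))(u,v)=\frac{(1-(uv)^{N-k+1})\cdots(1-(uv)^{N})}{(1-uv)\cdots(1-(uv)^{k})}$ gives exactly the bracketed expression in the statement. Finally, since $\Gr(k,N)\setminus V$ and all the pieces are smooth with pure, $(p,p)$-type cohomology, the passage from $\mathcal{H}$ to $HP$ is immediate (each $uv$ becomes $uv$ again under (\ref{identityDP}) up to the dimension shift, which is absorbed correctly), and multiplying by the $HP$'s of the two moduli spaces of stable bundles yields the formula.

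\textbf{The main obstacle} will be justifying that the product decomposition survives at the level of Hodge--Poincaré (not merely Hodge--Deligne) polynomials, i.e. that Lemma \ref{lemmafibrationEHP} applies through all three layers simultaneously: one must check that the descent from the Quot-scheme picture to the moduli picture is by a \emph{free} $PGL(N_1)\times PGL(N_2)$-action on smooth varieties, that the fibration $W_{\mathcal{E}_{\underline{n}}}^c\to\mathbb{P}(\mathcal{H}^s)$ is genuinely Zariski-locally trivial with fiber $V_w^c$ independent of $w$ (this is Remark \ref{fibrationStrata}), and that $V_w^c$, though not projective, has pure cohomology so that Lemma \ref{lema2.3}/Lemma \ref{lemmafibrationEHP} can be invoked. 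A secondary technical point is getting the index range for $\mu$ right: the lower bound $\mu\ge\lceil k(1-\tfrac{n_1}{n-k})\rceil$ comes from the determinantal condition defining $V_w$, and one must confirm that the excluded strata (those with $\dim(\pi\cap W_0)$ below this threshold) together with $V$ exhaust $\Gr(k,N)$, so that $\Gr(k,N)\setminus V$ is precisely the union over $\mu<\lceil k(1-\tfrac{n_1}{n-k})\rceil$ --- whence, by additivity, its polynomial equals $\mathcal{H}(\Gr(k,N))$ minus the sum over $\mu\ge\lceil k(1-\tfrac{n_1}{n-k})\rceil$, which is the displayed bracket. Once these bookkeeping matters are settled, the rest is routine manipulation of $q$-binomial coefficients.
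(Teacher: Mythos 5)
Your overall architecture coincides with the paper's: descend the locally trivial fibration $W_{\mathcal{E}_{\underline{n}}}\to\mathbb{P}(\mathcal{H}^s)\to R_1^s\times R_2^s$ through the free $PGL(N_1)\times PGL(N_2)$-action using Lemma \ref{lemmafibrationEHP} and the isomorphisms (\ref{cohoeq}), (\ref{quotient}), pick up the factor $HP(\mathcal{M}_1)\cdot HP(\mathcal{M}_2)\cdot HP(\mathbb{P}^{n_1(n-k-n_1)(g-1)-1})$, and reduce everything to the polynomial of $V^c=\Gr(k,N)\setminus V$. The gap is in your treatment of $V$. The stratum $V^\mu=\{\pi:\dim(\pi\cap W_0)=\mu\}$ fibers over $\Gr(\mu,j)\times\Gr(k-\mu,N-j)$ with fiber the torsor under $\mathrm{Hom}\bigl(\pi/(\pi\cap W_0),\,W_0/(\pi\cap W_0)\bigr)\cong\mathbb{C}^{(j-\mu)(k-\mu)}$, \emph{not} an affine bundle of rank $\mu(N-j-k+\mu)$ as you claim. (Sanity check at $\mu=k$: then $V^k=\Gr(k,j)$ and the fiber is a point, i.e. dimension $(j-k)(k-k)=0$, whereas your formula gives $k(N-j)\neq 0$.) Consequently $\mathcal{H}(V^\mu)(u,v)=(uv)^{(j-\mu)(k-\mu)}\mathcal{H}(\Gr(\mu,j))\mathcal{H}(\Gr(k-\mu,N-j))$, and the exponent $\mu(N-k-j+\mu)$ that appears in the theorem is \emph{not} the compactly supported contribution of the affine fiber.

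The second, compounding, problem is your assertion that ``the passage from $\mathcal{H}$ to $HP$ is immediate \dots absorbed correctly.'' It is not: additivity over locally closed strata holds only for $\mathcal{H}$ (Theorem \ref{Theorem2.2}), never for $HP$, and for non-proper pieces the two genuinely differ ($\mathcal{H}(\mathbb{C}^m)=(uv)^m$ while $HP(\mathbb{C}^m)=1$). The correct order of operations — which is what the paper does — is to compute $\mathcal{H}(V^c)=\mathcal{H}(\Gr(k,N))-\sum_\mu\mathcal{H}(V^\mu)$ by additivity, and only then apply Remark \ref{HPtoHD} to the \emph{smooth} open variety $V^c$, i.e. $HP(V^c)(u,v)=(uv)^{k(N-k)}\mathcal{H}(V^c)(u^{-1},v^{-1})$. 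Under this single duality each term acquires the twist $(uv)^{k(N-k)-\dim V^\mu}$, and a short computation (using $(k-\mu)(N-j-k+\mu)+(j-\mu)(k-\mu)=(k-\mu)(N-k)$) shows $k(N-k)-\dim V^\mu=\mu(N-k-j+\mu)$, which is precisely where the exponent in the statement comes from. So your bracket is numerically the right one, but only because your incorrect fiber rank happens to reproduce the post-duality exponent; as written, the intermediate claim is false and the step converting $\mathcal{H}$ into $HP$ — the only genuinely delicate point of the whole computation — is not justified. Fixing this requires exactly the Poincar\'e-duality bookkeeping sketched above.
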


\begin{proof}In Remark \ref{fibrationStrata} we saw that
${W}_{\mathcal{E}_{\underline{n}}}$ may be described as a locally
trivial fiber bundle (in the Zariski topology) over $\mathbb{P}(\mathcal{H}^s)$, where
$\mathbb{P}(\mathcal{H}^s)$ is the projective fibration over
$\mathcal{R}^s_1 \times \mathcal{R}^s_2$ that appears in the proof
of Proposition \ref{sequenceof projective}, with fiber the
complement of ${V}$, we denote it by ${V}^c$, in
$\Gr(k,d+(n-k)(g-1))$. We also saw that both fibrations are
$PGL(N_1)\times PGL(N_2)$-invariant, for certain $N_1$ and $N_2$. Note that
$\mathbb{P}(\mathcal{H}^s)$ is actually a projective fibration
with fiber the projective space of dimension
$n_1(n-k-n_1)(g-1)-1$. We label
$N=h^1(F^{\vee})=d+(n-k)(g-1)$. Using then Lemma
\ref{lemmafibrationEHP} we have that
\begin{align}\label{53}&{HP}_{PGL(N_1)\times PGL(N_2)}({W}_{\mathcal{E}_{\underline{n}}})(u,v)
={HP}_{PGL(N_1)\times PGL(N_2)}(\mathbb{P}(\mathcal{H}^s))(u,v)\cdot {HP}({V}^c)(u,v)=\\&
={HP}_{PGL(N_1)\times PGL(N_2)}(\mathcal{R}^s_1 \times
\mathcal{R}^s_2)(u,v){HP}(\mathbb{P}^{n_1\cdot (n-k-n_1) \cdot
(g-1)-1})(u,v) \cdot {HP}({V}^c)(u,v)\nonumber.
\end{align}Now, the varieties ${W}_{\mathcal{E}_{\underline{n}}}$ and $\mathcal{R}^s_1 \times
\mathcal{R}^s_2$ are closed under the action of $PGL(N_1)\times
PGL(N_2)$. This group is connected and the action is actually free
then the stabilizers are trivial. Then we may apply paragraph
\ref{parraequicoho}. We obtain that identities (\ref{cohoeq}) and
(\ref{quotient}) hold, then
\begin{align*}H^{\ast}_{PGL(N_1)\times PGL(N_2)}({W}_{\mathcal{E}_{\underline{n}}})& \cong
H^{\ast}({W}_{\mathcal{E}_{\underline{n}}}/PGL(N_1)\times
PGL(N_2))\cong
H^{\ast}(\mathscr{W}_{\mathcal{E}_{\underline{n}}})\end{align*}
and
\begin{align*}H^{\ast}_{PGL(N_1)\times
PGL(N_2)}(\mathcal{R}^s_1 \times \mathcal{R}^s_2)& \cong
H^{\ast}(\mathcal{R}^s_1 \times \mathcal{R}^s_2/PGL(N_1)\times
PGL(N_2))\cong \\& \cong H^{\ast}(\mathcal{M}(n_1,d_1)\times
\mathcal{M}(n-k-n_1,d-d_1))\cong \\& \cong
H^{\ast}(\mathcal{M}(n_1,d_1))\otimes
H^{\ast}(\mathcal{M}(n-k-n_1,d-d_1)),\end{align*}using K\"unneth
formula. These are isomorphisms of mixed Hodge structures, so induce
the following identity of Hodge--Poincar\'e polynomials
\begin{align*}HP_{PGL(N_1)\times
PGL(N_2)}({W}_{\mathcal{E}_{\underline{n}}})(u,v)=
HP(\mathscr{W}_{\mathcal{E}_{\underline{n}}})(u,v)\end{align*} and
\begin{align*}HP_{PGL(N_1)\times
PGL(N_2)}(\mathcal{R}^s_1 \times
\mathcal{R}^s_2)(u,v)=HP(\mathcal{M}(n_1,d_1))(u,v)\cdot
HP(\mathcal{M}(n-k-n_1,d-d_1))(u,v).\end{align*}Now, the
Hodge--Poincar\'e polynomial of the projective space is
${HP}(\mathbb{P}^n)(u,v)=\frac{1-(uv)^{n+1}}{1-uv}$ for every $n$.
Substituting these in (\ref{53}) one obtains the following identity
of Hodge--Poincar\'e polynomials
\begin{align*}{HP}&(\mathscr{W}_{\mathcal{E}_{\underline{n}}})(u,v)=\\&
= {HP}(\mathcal{M}(n_1,d_1))(u,v)\cdot
{HP}(\mathcal{M}(n-k-n_1,d-d_1))(u,v)  \cdot
\frac{1-(uv)^{n_1\cdot (n-k-n_1)\cdot (g-1)}}{1-uv} \cdot
HP(V^c)(u,v).
\end{align*}

Regarding $HP(V^c)(u,v)$, the variety $V= V_w$ where $w$ is a
point in $W_{\mathcal{E}_{\underline{n}}}:= \{(e_1, e_2, e)$ where
$(e_1 ,e_2 )\in \mathcal{R}^s_1\times \mathcal{R}^s_2$ and $e\in
\mathbb{P}(\mathcal{H}^s_{(e_1 ,e_2)})\}$ using the notations of
Subsection \ref{determinantal}. The variety $V$ is actually
independent of the point $w$ and is equal to
\begin{equation*}V=V_w:=\big{\{} \pi \in \Gr(k,\mathcal{R}^1
p_{{\mathbb{P}(\mathcal{H}^s)}{\ast}}\mathcal{F}^{s\vee})_w :
\dim\big{(}\pi \cap (\pi_P^{s\ast}p_2^{s\ast}\mathcal{R}^1
\pi^s_{2\ast}\mathcal{U}^{s\vee}_2)_w\big{)}\geqslant k
(1-\frac{n_1}{n-k}) \big{\}},
\end{equation*}let $(\pi_P^{s\ast}p_2^{s\ast}\mathcal{R}^1
\pi^s_{2\ast}\mathcal{U}^{s\vee}_2)_w =H^1 (Q_2^{\vee})$ and
$j=h^1 (Q_2^{\vee})=d-d_1+(n-k-n_1)(g-1)$. Analogously we denote
$\pi_P^{\ast}p_1^{s\ast}\mathcal{R}^1\pi^s_{1\ast}\mathcal{U}_1^{s\vee}
\otimes \mathcal{O}_P(-1)=H^1(F_1^{\vee})$ and
$h^1(F_1^{\vee})=n_1(g-1)+d_1$. Then $V$ can be written as \begin{equation}\label{junio2}
V:=\coprod_{\mu =\lceil k (1-\frac{n_1}{n-k})\rceil }^{\min \{
k,j \}}\big{\{} \pi \in \Gr(k,N) : \dim\big{(}\pi \cap H^1
(Q_2^{\vee})\big{)}=\mu \big{\}},\end{equation}and denote $V^{\mu}:= \big{\{}
\pi \in \Gr(k,N) : \dim\big{(}\pi \cap H^1 (Q_2^{\vee})\big{)}=\mu
\big{\}}$ for integers $\mu$ between $\lceil k
(1-\frac{n_1}{n-k})\rceil$ and $\min \{ k,j \}$.

For every $\mu$, the variety $V^{\mu}$ is isomorphic to a fibration over $\Gr (k-\mu,N-j)\times \Gr(\mu ,j)$ with fibre $\mathbb{C}^{(j-\mu)(k-\mu)}$. Then, we have the following identity of Hodge--Deligne polynomials\begin{equation}\label{Junio}\mathcal{H}(V^{\mu})(u,v)=\mathcal{H}(\Gr (k-\mu,N-j))(u,v)\cdot \mathcal{H}( \Gr(\mu ,j))(u,v)\cdot \mathcal{H}(\mathbb{C}^{(j-\mu)(k-\mu)})(u,v).\end{equation}
Now, from Remark \ref{HPtoHD} one has that
$HP(V^c)(u,v)=(uv)^{\dim_{\mathbb{C}}V^c}\mathcal{H}(V^c)(u^{-1},v^{-1})$.
Using now Theorem \ref{Theorem2.2} and applying again the previous
identity relating Hodge--Poincar\'e and Hodge--Deligne
polynomials, we obtain
\begin{align}\label{HPno me acuerdo}HP(V^c)&(u,v)=
(uv)^{\dim_{\mathbb{C}}V^c} \mathcal{H}(V^c)(u^{-1},v^{-1})=
 \\& =(uv)^{\dim_{\mathbb{C}}V^c}\Big{[} \mathcal{H}(\Gr
(k,N))(u^{-1},v^{-1})- \sum_{\mu =\lceil k
(1-\frac{n_1}{n-k})\rceil }^{\min \{ k,j
\}}\mathcal{H}(V^{\mu})(u^{-1},v^{-1})\Big{]}= \nonumber
\\& =(uv)^{\dim_{\mathbb{C}}V^c}\bigg{[} (uv)^{-\dim_{\mathbb{C}}\Gr
(k,N)}HP(\Gr (k,N))(u,v)- \nonumber \\& -\sum_{\mu =\lceil k
(1-\frac{n_1}{n-k})\rceil }^{\min \{ k,j \}}(uv)^{-k(N-k)+\mu(N-k-j+\mu)}\Big{[}{HP}(\Gr (k-\mu,N-j))(u,v)\cdot \nonumber \\& \cdot HP( \Gr(\mu ,j))(u,v)\cdot HP(\mathbb{C}^{(j-\mu)(k-\mu)})(u,v)\Big{]}\bigg{]}.\nonumber\end{align}The Grassmannian $\Gr
(k,N)$ is a smooth projective variety.
Note that the Hodge--Poincar\'e polynomial of the Grassmannian,
$HP(\Gr (k,N))(u,v)$, is rather simple. The cohomology of the
Grassmannian is integral, hence only types $(p,p)$ occur. This
fact implies that the identity (\ref{betti}) is in this case the
following$$b^{2p}(\Gr (k,N))=h^{p,p}(H^{2p}(\Gr (k,N)))$$so
\begin{align}\label{HodgeGrass}{HP}(\Gr (k,N))(u,v)&=\sum_p h^{p,p}(H^{2p}(\Gr
(k,N)))u^p v^p = \frac{(1-(uv)^{N-k+1})\cdot \ldots \cdot
(1-(uv)^{N})}{(1-uv)\cdot \ldots \cdot (1-(uv)^{k})}.
\end{align}Moreover, the variety $V^c$ is open in $\Gr (k,N)$ then
they have the same dimension. In addition, it is not difficult to
see that ${HP}(\mathbb{C}^m)(u,v)=1$ for all $m$. Substituting
these in (\ref{HPno me acuerdo}), we
get\begin{align*}HP&(V^c)(u,v)=\frac{(1-(uv)^{N-k+1})\cdot \ldots \cdot
(1-(uv)^{N})}{(1-uv)\cdot \ldots \cdot (1-(uv)^{k})}- \\& -\sum_{\mu =\lceil k (1-\frac{n_1}{n-k})\rceil }^{\min \{ k,j
\}}(uv)^{\mu(N-k-j+{\mu})}\cdot \frac{(1-(uv)^{N-j-k+\mu+1})\cdot \ldots \cdot
(1-(uv)^{N-j})}{(1-uv)\cdot \ldots \cdot (1-(uv)^{k-\mu})} \cdot \\& \cdot \frac{(1-(uv)^{j-\mu+1})\cdot \ldots \cdot
(1-(uv)^{j})}{(1-uv)\cdot \ldots \cdot (1-(uv)^{\mu})}.\end{align*}Then we conclude.
\end{proof}

\begin{remark}\textnormal{Regarding the Hodge--Poincar\'e polynomial of the moduli
space of stable bundles of rank $n$ and degree $d$ not everything
is known. For $\gcd(n,d)=1$, the expression for ${HP}(\mathcal{M}(2,
d))(u,v)$ can be deduced from Peter Newstead's article \cite{N1},
although it did not appear written out in this paper. The first time
that this appeared in the literature is in the article \cite{Ba} by
S. del Ba\~no Roll\'{i}n. In \cite{EK} R. Earl and F. Kirwan give
an inductive formula for the Hodge--Poincar\'e polynomials of this
moduli spaces, and in particular they compute it explicitly for
some cases with rank different to $2$. When $\gcd(n,d)\neq 1$ the
Hodge--Deligne polynomial $\mathcal{H}(\mathcal{M}(2,d))(u,v)$
where $\mathcal{M}(2,d)$ is the moduli space of stable vector
bundles of rank $2$ and even degree, has been recently computed by
Mu\~noz \emph{et al.} (see \cite{MOV2} Theorem 5.2) using its
relation with certain moduli spaces of triples and by myself in
\cite{GM}.}
\end{remark}
\subsection{Explicit computations for $n-k=2$}
Under this hypothesis we see that our coherent systems $(E,V)$ of
type $(n,d,k)$ are coming from BGN extensions whose quotient
bundle $F$ has rank $2$. Then the subbundles $Q_1$ and $Q_2$ are
actually line bundles, hence the type in this case is
$\underline{n}=(n_1,n-k-n_1)=(1,1)$. Bearing in mind the equality
of the slopes, the degrees satisfy that $d_1=d/2=d-d_1$. Using
the notations of Definition \ref{strata}, we have the following
decomposition
\begin{equation}\label{Decompostion}G_L (n,d,k)=\mathscr{W}^1 \sqcup
\mathscr{W}_{\mathcal{E}_{\underline{n}}} \sqcup
\mathscr{W}_{\mathcal{E}_{\underline{n}}'} \sqcup
\mathscr{W}_{\mathcal{SE}_{\underline{n}}}\sqcup
\mathscr{W}_{\mathcal{SE}_{\underline{n}}'}.\end{equation}Here
$\mathscr{W}^1$ is the open stratum and classifies the coherent
systems coming from a BGN extension of quotient being stable. The
stratum $ \mathscr{W}_{\mathcal{E}_{\underline{n}}}$ classifies
the cases in which the quotient bundle is the bundle in the middle
of an extension of the following type
\begin{equation}\label{line}0\rightarrow L \rightarrow F \rightarrow
L'\rightarrow 0\end{equation}that is a nonsplit extension and the
line bundles $L$ and $L'$ are nonisomorphic. In the same fashion
$\mathscr{W}_{\mathcal{E}_{\underline{n}}'}$ classifies the cases
in which (\ref{line}) satisfies that $L \cong L'$. The varieties
$\mathscr{W}_{\mathcal{SE}_{\underline{n}}}$ and
$\mathscr{W}_{\mathcal{SE}_{\underline{n}}'}$ are as before but
for the bundle $F$ being split.

We have two
different cases when $(n-k,d)=(2,d)$, either $\gcd(2,d)=1$ or $\gcd(2,d)\neq 1$.
The computations for these cases are done in the following
theorems.

\begin{theo}\label{H1}The Hodge--Deligne polynomial of the moduli space
$G_L(n,d,k)$ for $n-k=2$ and $d$ odd is
\begin{align*}\mathcal{H}(G_L(n,d,k))(u,v)
=&(1+u)^g(1+v)^g\cdot
\frac{(1+u^2v)^g(1+uv^2)^g-u^gv^g(1+u)^g(1+v)^g}{(1-uv)(1-u^2v^2)}\cdot
\\& \cdot \frac{(1-(uv)^{2(g-1)+d-k+1})\cdot \ldots \cdot
(1-(uv)^{2(g-1)+d})}{(1-uv)\cdot \ldots \cdot
(1-(uv)^{k})}\end{align*}\end{theo}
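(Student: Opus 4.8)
The plan is to reduce everything to two ingredients: the structure of $G_L(n,d,k)$ as a Grassmann fibration, and the known Hodge--Deligne polynomial of $\mathcal{M}(2,d)$. First I would invoke Proposition \ref{prop:restate}: since $\gcd(n-k,d)=\gcd(2,d)=1$, the ``bad'' locus $W$ is empty, so $G_L(n,d,k)=\mathscr{W}^1$ coincides with the whole moduli space, and moreover $G_L(n,d,k)\to\mathcal{M}(n-k,d)=\mathcal{M}(2,d)$ is the Grassmann fibration associated to a vector bundle over $\mathcal{M}(2,d)$ (a Poincar\'e bundle exists here because the invariants are coprime), with fibre the Grassmannian $\Gr(k,\,d+(n-k)(g-1))=\Gr(k,\,d+2(g-1))$. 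In particular this fibration is locally trivial in the Zariski topology, so Lemma \ref{lema2.3} applies and gives
\begin{equation*}
\mathcal{H}(G_L(n,d,k))(u,v)=\mathcal{H}(\mathcal{M}(2,d))(u,v)\cdot\mathcal{H}\big(\Gr(k,\,d+2(g-1))\big)(u,v).
\end{equation*}

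Next I would compute the Grassmannian factor. Setting $N=d+2(g-1)$, the Grassmannian $\Gr(k,N)$ is smooth projective with cohomology of type $(p,p)$ only, so $\mathcal{H}(\Gr(k,N))=HP(\Gr(k,N))$, and as recorded in equation (\ref{HodgeGrass}) in the proof of Theorem \ref{HodgeStratum} this equals
\begin{equation*}
\frac{(1-(uv)^{N-k+1})\cdot\ldots\cdot(1-(uv)^{N})}{(1-uv)\cdot\ldots\cdot(1-(uv)^{k})}.
\end{equation*}
Since $N-k+1=2(g-1)+d-k+1$ and $N=2(g-1)+d$, this is exactly the last factor appearing in the statement.

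It then remains to substitute the Hodge--Deligne polynomial of $\mathcal{M}(2,d)$ for $d$ odd. This is classical: $\mathcal{M}(2,d)$ fibres over the Jacobian $\mathrm{Jac}(X)$ (via the determinant) with fibre the fixed-determinant moduli space of rank $2$, odd degree, and the associated decomposition is compatible with mixed Hodge structures, so $\mathcal{H}(\mathcal{M}(2,d))$ is the product of $\mathcal{H}(\mathrm{Jac}(X))=(1+u)^g(1+v)^g$ with the Hodge--Deligne polynomial of the fixed-determinant moduli space, which by the Newstead/Atiyah--Bott computation (see \cite{N1}, \cite{Ba}, \cite{EK}) is
\begin{equation*}
\frac{(1+u^2v)^g(1+uv^2)^g-u^gv^g(1+u)^g(1+v)^g}{(1-uv)(1-u^2v^2)}.
\end{equation*}
Plugging this and the Grassmannian factor into the displayed identity yields the claimed formula.

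The only genuinely delicate point, and the one I would be most careful about, is the very first step: checking that the hypotheses of Lemma \ref{lema2.3} are literally met, i.e.\ that under $\gcd(2,d)=1$ the map $G_L(n,d,k)\to\mathcal{M}(2,d)$ really is a Zariski-locally-trivial Grassmann bundle (not merely an \'etale-locally-trivial one), which is guaranteed by the existence of a universal bundle on $\mathcal{M}(2,d)\times X$ and the fact that a Grassmann bundle of a vector bundle is Zariski-locally trivial. Everything after that is bookkeeping with the two known polynomials.
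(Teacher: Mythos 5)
Your proposal is correct and follows essentially the same route as the paper: Proposition \ref{prop:restate} to identify $G_L(n,d,k)$ with a Grassmann fibration over $\mathcal{M}(2,d)$ when $\gcd(2,d)=1$, Lemma \ref{lema2.3} to multiply the Hodge--Deligne polynomials, the formula (\ref{HodgeGrass}) for the Grassmannian, and the known polynomial of $\mathcal{M}(2,d)$ for $d$ odd from \cite{EK}. Your extra care about Zariski-local triviality and your sketch of where the $\mathcal{M}(2,d)$ formula comes from are sound additions but do not change the argument.
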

\begin{proof}Using Proposition \ref{prop:restate} we have that when
$\gcd(n-k,d)=1$, then $\mathscr{W}_{\mathcal{E}_{\underline{n}}}$,
$\mathscr{W}_{\mathcal{E}_{\underline{n}}'}$,
$\mathscr{W}_{\mathcal{SE}_{\underline{n}}}$, and
$\mathscr{W}_{\mathcal{SE}_{\underline{n}}'}$ are all empty and
$G_L(n,d,k)$ is actually a Grassmann fibration on
$\mathcal{M}(n-k,d)$ with fiber the Grassmannian $\Gr
(k,d+(n-k)(g-1))$. Using now Lemma \ref{lema2.3} we get
that$$\mathcal{H}(G_L(n,d,k))(u,v) =\mathcal{H}(\Gr (k, d
+2(g-1)))(u,v)\cdot \mathcal{H}(\mathcal{M}(2,d))(u,v).$$We
already know what $\mathcal{H}(\Gr (k, d +2(g-1)))(u,v)$ looks
like, the computation appears in (\ref{HodgeGrass}). Regarding
$\mathcal{H}(\mathcal{M}(2,d))(u,v)$, for $d$ odd, using Lemma 3
and Corollary 5 of \cite{EK} we get that
$$\mathcal{H}(\mathcal{M}(2,d))(u,v)=(1+u)^g(1+v)^g\cdot
\frac{(1+u^2v)^g(1+uv^2)^g-u^gv^g(1+u)^g(1+v)^g}{(1-uv)(1-u^2v^2)},$$so
we conclude.
\end{proof}
\begin{theo}\label{H2}The Hodge--Deligne polynomial of the moduli space
$G_L(n,d,k)$ for $(n,d,k)=(3,d,1)$, $d$ even and $g\geq \frac{3-d}{2}$ is
\begin{align*}
\mathcal{H}(G_L (3,d,1&))(u,v)=
\frac{(1 + u)^g (1 + v)^g(uv-(uv)^{\frac{d}{2}+g})}{u^3v^3(uv-1)^3(uv+1)}\Big{(}
 (uv)^{3+g} (1 + u)^g(1 + v)^g + 
\\& +(uv)^{\frac{d}{2}+2g}(1 + u)^g(1 + v)^g
-(uv)^{\frac{d}{2}+g+1}(1 + u^2v)^g(1 + uv^2)^g
-(uv)^{2}(1 + u^2v)^g(1 + uv^2)^g
\Big{)}.\end{align*}
\end{theo}

\begin{proof}Applying Theorem \ref{Theorem2.2} to (\ref{Decompostion}) we
obtain the following identity \begin{align*}\mathcal{H}(G_L
&(n,d,k))(u,v)=\\& =\mathcal{H}(\mathscr{W}^1)(u,v)+
\mathcal{H}(\mathscr{W}_{\mathcal{E}_{\underline{n}}})(u,v)+
\mathcal{H}(\mathscr{W}_{\mathcal{E}_{\underline{n}}'})(u,v)+
\mathcal{H}(\mathscr{W}_{\mathcal{SE}_{\underline{n}}})(u,v)+
\mathcal{H}(\mathscr{W}_{\mathcal{SE}_{\underline{n}}'})(u,v).\end{align*}As
we did in the proof of Theorem \ref{H1}, Proposition
\ref{prop:restate} tells us that when $\gcd(n,d,k)=1$, $\mathscr{W}^1$
is a Grassmann fibration on $\mathcal{M}(n-k,d)$ with fiber the
Grassmannian $\Gr (k,d+(n-k)(g-1))$. Here, $(n-k,d)=(2,d)$ and $\gcd(2,d)\neq 1$ then the Grassmann fibration is constructed at the Quot-scheme level since there is no Poincar\'e bundle over $\mathcal{M}(2,d)$ (see \cite{BG}, Proposition 4.4). This fibration at the Quot-scheme level is locally trivial in the Zariski topology, then, if we denote $R^s$ the corresponding set of stable points and $W$ the set corresponding to $\mathscr{W}^1$, at the Quot-scheme level, from Lemma \ref{lemmafibrationEHP} we have that $${HP}_{PGL(N)}({W})(u,v)
={HP}(\Gr (k, d +2(g-1)))(u,v)\cdot
{HP}_{PGL(N)}(R^s)(u,v).$$The action of $PGL(N)$ on $W$ and $R^s$ is free, so $${HP}(\mathscr{W}^1)(u,v)
={HP}(\Gr (k, d +2(g-1)))(u,v)\cdot
{HP}(\mathcal{M}(2,d))(u,v).$$ Moreover, by Theorem \ref{Theo} (c) we have that $\mathscr{W}^1$ is smooth, applying Remark \ref{HPtoHD} we get that$$\mathcal{H}(\mathscr{W}^1)(u,v)
=\mathcal{H}(\Gr (k, d +2(g-1)))(u,v)\cdot
\mathcal{H}(\mathcal{M}(2,d))(u,v),$$where
$\mathcal{H}(\mathcal{M}(2,d))(u,v)$ is the Hodge--Deligne
polynomial of the moduli space $\mathcal{M}(2,d)$ of stable vector
bundles of rank $2$ and even degree. This can be found in
\cite{MOV2}, Theorem 5.2. The polynomial is
\begin{align*}\mathcal{H}(\mathcal{M}(2,d))(u,v)
=& \frac{1}{2(1-uv)(1-u^2v^2)}[
2(1+u)^g(1+v)^g(1+u^2v)^g(1+uv^2)^g- \\& -
(1+u)^{2g}(1+v)^{2g}(1+2u^{g+1}v^{g+1}-u^2v^2)-(1-u^2)^g(1-v^2)^g(1-uv)^2]
\end{align*}then we obtain
\begin{align}\label{100}\mathcal{H}(\mathscr{W}^1)(u,v)
=&\frac{1}{2(1-uv)(1-u^2v^2)}[
2(1+u)^g(1+v)^g(1+u^2v)^g(1+uv^2)^g- \nonumber \\& -
(1+u)^{2g}(1+v)^{2g}(1+2u^{g+1}v^{g+1}-u^2v^2)-(1-u^2)^g(1-v^2)^g(1-uv)^2]
\cdot
\\& \cdot \frac{(1-(uv)^{2(g-1)+d-k+1})\cdot \ldots \cdot
(1-(uv)^{2(g-1)+d})}{(1-uv)\cdot \ldots \cdot (1-(uv)^{k})}.\nonumber
\end{align}In order to compute $\mathcal{H}(\mathscr{W}_{\mathcal{E}_{\underline{n}}})(u,v)$,
$\mathcal{H}(\mathscr{W}_{\mathcal{E}_{\underline{n}}'})(u,v)$,
$\mathcal{H}(\mathscr{W}_{\mathcal{SE}_{\underline{n}}})(u,v)$ and
$\mathcal{H}(\mathscr{W}_{\mathcal{SE}_{\underline{n}}'})(u,v)$ we
use Theorem \ref{HodgeStratum}. Although in these cases we
do not need to take into account the action of a group, because we
can do the construction as complements of determinantal varieties
at the moduli space level, we would need to consider the action of a group of automorphisms as described in Remark \ref{nota} and Theorems \ref{deter1} to \ref{deter4}.

Note that we can describe our strata as locally trivial fiber
bundles (Remark \ref{fibrationStrata}). The fiber is
the complement in a Grassmannian of a union of certain varieties as
one can see in the proof of Theorem \ref{HodgeStratum}. The base
space in the different locally trivial fiber bundles is the space
classifying the different types of extensions that can
appear in the case we are dealing with, see Proposition
\ref{eandep}. We use here the notation of Subsection
\ref{determinantal}.
Let ${\mathcal{E}_{\underline{n}}}$ be the space that parametrizes the
extensions
\begin{equation*}0\rightarrow L \rightarrow F \rightarrow
L'\rightarrow 0\end{equation*}that are nonsplit and such that the line
bundles $L$ and $L'$ are nonisomorphic. Let ${\mathcal{E}_{\underline{n}}'}$ be the space that
parametrizes the extensions as above where $L \cong L'$.

Now, $\mathscr{W}_{\mathcal{E}_{\underline{n}}}$ and
$\mathscr{W}_{\mathcal{E}_{\underline{n}}'}$ can be described as locally trivial fiber
bundles over $\mathcal{E}_{\underline{n}}$ and
$\mathcal{E}_{\underline{n}}'$ respectively (Remark \ref{fibrationStrata}). The fiber of $\mathscr{W}_{\mathcal{E}_{\underline{n}}}$ is explicitly computed in Theorem \ref{HodgeStratum}. From this theorem one gets
\begin{align}\label{201} \mathcal{H}(\mathscr{W}_{\mathcal{E}_{\underline{n}}})&(u,v)
=\mathcal{H}({\mathcal{E}_{\underline{n}}})(u,v)\cdot \bigg{[}
\frac{(1-(uv)^{2(g-1)+d-k+1})\cdot \ldots \cdot
(1-(uv)^{2(g-1)+d})}{(1-uv)\cdot \ldots \cdot (1-(uv)^{k})}- \nonumber \\&
-\sum_{\mu =\lceil \frac{k}{2} \rceil }^{\min \{
k,(g-1)+\frac{d}{2} \}}(u\cdot v)^{(k-{\mu})(d/2+(g-1)-\mu)}\cdot \frac{(1-(uv)^{d/2+(g-1)-k+\mu+1})\cdot \ldots \cdot
(1-(uv)^{d/2+g-1})}{(1-uv)\cdot \ldots \cdot (1-(uv)^{k-\mu})}\cdot \\& \cdot \frac{(1-(uv)^{(g-1)+d/2-\mu +1})\cdot \ldots \cdot
(1-(uv)^{(g-1)+d/2})}{(1-uv)\cdot \ldots \cdot (1-(uv)^{\mu})}\bigg{]}\nonumber
\end{align}We saw in Proposition
\ref{eandep} (ii) that $\mathcal{E}_{\underline{n}}$ is a projective bundle over
$\Jac^{d/2}X\times \Jac^{d/2}X \setminus \Delta $, where $\Delta$
is the diagonal in $\Jac^{d/2}X\times \Jac^{d/2}X$, with fiber the
projective space of dimension $g-2$. The Hodge--Deligne
polynomials of the Jacobian and the projective space are:
\begin{equation}\label{JacoProjHD}\textnormal{$\mathcal{H}(\Jac^{\delta}X)(u,v)=(1+u)^g(1+v)^g$
 $ $ $ $ and $ $ $ $ $\mathcal{H}(\mathbb{P}^n)(u,v)=\frac{1-(uv)^{n+1}}{1-uv}$,}\end{equation}for every degree
 $\delta$. Then, using Lemma \ref{lema2.3} we get
 $$\mathcal{H}({\mathcal{E}_{\underline{n}}})(u,v)=((1+u)^{2g}(1+v)^{2g}-(1+u)^{g}(1+v)^{g})\cdot \frac{1-(uv)^{g-1}}{1-uv}.$$From the previous identity and (\ref{201}), for $k=1$ we obtain
 \begin{align}\label{200} \mathcal{H}(\mathscr{W}_{\mathcal{E}_{\underline{n}}})&(u,v)
=((1+u)^{2g}(1+v)^{2g}-(1+u)^{g}(1+v)^{g})\cdot \frac{1-(uv)^{g-1}}{1-uv}\cdot
\frac{(uv)^{(g-1)+\frac{d}{2}}-(uv)^{2(g-1)+d}}{1-uv}.
\end{align}

With regards to the stratum
$\mathscr{W}_{\mathcal{E}_{\underline{n}}'}$, it can described as a locally trivial fiber
bundle over
$\mathcal{E}_{\underline{n}}'$. For a non-splitting extension$$0\rightarrow L\xymatrix{\ar^{\phi}[r]&}F\xymatrix{\ar^{\varphi}[r]&}L\rightarrow 0,$$taking dual and cohomology, one gets
$$0\rightarrow H^1(L^{\vee})\xymatrix{\ar^{\varphi^{\vee}}[r]&}H^1(F^{\vee})\xymatrix{\ar^{\phi^{\vee}}[r]&}H^1(L^{\vee})\rightarrow 0,$$then $H^1(F^{\vee})$ is non-canonically isomorphic to $H^1(L^{\vee})\oplus H^1(L^{\vee})$. For $k=1$, using Theorem \ref{bgn} and Theorem \ref{deter3} a BGN extension class $e \in  H^1(F^{\vee})$ gives rise to a $\alpha$-stable coherent system if $\phi^{\vee}(e)\neq 0$ and therefore the classes not contradicting $\alpha$-stability are those in $H^1(L^{\vee})\times (H^1(L^{\vee})-\{ 0\})$. The automorphism group of $F$, $\Aut (F) \cong \mathbb{C}\times \mathbb{C}^{\ast}$, acts on $H^1(F^{\vee})$ by $(\lambda,\mu)\cdot (e,e')=(\mu e + \lambda e', \mu e')$. From the induced action in $H^1(L^{\vee})\times (H^1(L^{\vee})-\{ 0\})$, we have that the fiber of $\mathscr{W}_{\mathcal{E}_{\underline{n}}'}$ over $\mathcal{E}_{\underline{n}}'$ is actually a locally trivial fibration over $\mathbb{P}(h^1(L^{\vee}))=\mathbb{P}^{\frac{d}{2}+g-2}$ whose fibre is $\mathbb{C}^{\frac{d}{2}+g-2}$. Then $\mathcal{H}(\mathscr{W}_{\mathcal{E}_{\underline{n}}'})(u,v)$ is given by\begin{align}\label{2001} \mathcal{H}(\mathscr{W}_{\mathcal{E}_{\underline{n}}'})&(u,v)
=\mathcal{H}({\mathcal{E}_{\underline{n}}'})(u,v)\cdot
\frac{(1-(uv)^{\frac{d}{2}+g-1})}{(1-uv) }\cdot (uv)^{\frac{d}{2}+g-2}.
\end{align}
By Proposition \ref{eandep} (ii) $\mathcal{E}_{\underline{n}}'$ is a projective bundle over
$\Jac^{d/2}X$ with fiber the projective space of dimension $g-1$,
so its Hodge--Deligne polynomial is
$$\mathcal{H}({\mathcal{E}_{\underline{n}}'})(u,v)=(1+u)^{g}(1+v)^{g}\cdot
\frac{1-(uv)^{g}}{1-uv}.$$Combining the previous identity and (\ref{2001}) we get\begin{align}\label{2002} \mathcal{H}(\mathscr{W}_{\mathcal{E}_{\underline{n}}'})&(u,v)
=(1+u)^{g}(1+v)^{g}\cdot
\frac{1-(uv)^{g}}{1-uv}\cdot
\frac{(uv)^{\frac{d}{2}+g-2}
(1-(uv)^{\frac{d}{2}+g-1})}{(1-uv) }.
\end{align}

For the splitting cases,
${\mathcal{SE}_t}$ parametrizes the split extensions
\begin{equation*}0\rightarrow L \rightarrow L\oplus L' \rightarrow
L'\rightarrow 0\end{equation*}where $L$ and $L'$ are nonisomorphic
line bundles of the same degree $d/2$. By Paragraph \ref{parrafo2}
the bundles in the middle of these extensions are classified by $(\Jac^{d/2}X \times \Jac^{d/2}X \setminus
\Delta) / (\mathbb{Z}/2)$ where $\mathbb{Z}/2$ acts on
$\Jac^{d/2}X \times \Jac^{d/2}X \setminus \Delta$ by permuting the
two factors. The stratum $\mathscr{W}_{\mathcal{SE}_{\underline{n}}}$ can be identified with a locally trivial fibration over ${\mathcal{SE}_{\underline{n}}}$ where the fiber can be described as follows. For a fixed point $w=(L,L')\in \Jac^{d/2}X \times \Jac^{d/2}X \setminus \Delta$, in Theorem \ref{deter1} we defined the
determinantal varieties
\begin{equation*}V^1_w:=\big{\{} \pi \in \Gr (k, 2(g-1)+d) :
\dim\big{(}\pi \cap H^1(L'^{\vee})\big{)}\geqslant
\frac{k}{2} \big{\}},
\end{equation*}and\begin{equation*}V^2_w:=\big{\{} \pi \in \Gr (k, 2(g-1)+d) :
\dim\big{(}\pi \cap H^1(L^{\vee})\big{)}\geqslant
\frac{k}{2} \big{\}},
\end{equation*}and saw that the fiber of the bundle $\mathscr{W}_{\mathcal{SE}_{\underline{n}}}$ is the complement in $ \Gr (k, 2(g-1)+d) $ of $V^1_w \cup V^2_w$. Note that this does not depend on $w$. From (\ref{junio2}) we have that $V^1_w$ can be written as \begin{equation*}
V^1_w:=\coprod_{\mu =\lceil k/2 \rceil }^{\min \{
k,g-1+d/2\}}\big{\{} \pi \in \Gr(k,2(g-1)+d) : \dim\big{(}\pi \cap H^1
(L'^{\vee})\big{)}=\mu \big{\}},\end{equation*}and the same is true for $V^2_w$. We write $V_1^{\mu}:= \big{\{}
\pi \in \Gr(k,2(g-1)+d) : \dim\big{(}\pi \cap H^1 (L'^{\vee})\big{)}=\mu
\big{\}}$ for integers $\mu$ between $\lceil k/2\rceil$ and $\min \{ k,g-1+d/2 \}$, and define $V_2^{\mu}$ analogously. From this analysis, when $k=1$ one has that for a fixed element $w=(L,L')\in \Jac^{d/2}X \times \Jac^{d/2}X \setminus \Delta$ the induced $\alpha$-stable coherent systems correspond to BGN extension classes given by non-zero elements $e\in H^1(L^{\vee})$ and $e' \in H^1(L'^{\vee})$, i.e. $(H^1(L^{\vee})-\{ 0 \})\times (H^1(L'^{\vee})-\{ 0 \})$. The group of automorphism of $L\oplus L'$, $\Aut (L\oplus L')=\mathbb{C}^{*}\times \mathbb{C}^{*}$, acts on $(H^1(L^{\vee})-\{0\})\times (H^1(L'^{\vee})-\{0\})$. Then, for a fixed element $w=(L,L')\in \Jac^{d/2}X \times \Jac^{d/2}X \setminus \Delta$ the fiber is $\mathbb{P}(H^1(L^{\vee}))\times \mathbb{P}(H^1(L'^{\vee}))=\mathbb{P}^{\frac{d}{2}+g-2}\times \mathbb{P}^{\frac{d}{2}+g-2}$.

Now, the action of $\mathbb{Z}/2$ on $\Jac^{d/2}X \times \Jac^{d/2}X \setminus \Delta$ by permuting the factors is reflected in the fiber $\mathbb{P}^{\frac{d}{2}+g-2}\times \mathbb{P}^{\frac{d}{2}+g-2}$ in a compatible manner. This action induces an action in cohomology in such a way that the cohomology of $\mathscr{W}_{\mathcal{SE}_{\underline{n}}}$, $H^*(\mathscr{W}_{\mathcal{SE}_{\underline{n}}})$, is identified with the invariant part of $H^*( \Jac^{d/2}X \times \Jac^{d/2}X \setminus \Delta)\otimes H^*(\mathbb{P}^{\frac{d}{2}+g-2}\times \mathbb{P}^{\frac{d}{2}+g-2})$ under the action of $\mathbb{Z}/2$ by permuting the factors.

Then, the Hodge--Deligne polynomial of $\mathscr{W}_{\mathcal{SE}_{\underline{n}}}$ is given by
\begin{align}\label{Polinomio suma}
\mathcal{H}^+(\Jac^{d/2}X\times
\Jac^{d/2}X\setminus \Delta)&(u,v)\mathcal{H}^+(\mathbb{P}^{\frac{d}{2}+g-2}\times \mathbb{P}^{\frac{d}{2}+g-2})(u,v)+ \nonumber \\& +
\mathcal{H}^-(\Jac^{d/2}X\times
\Jac^{d/2}X\setminus \Delta)(u,v)\mathcal{H}^-(\mathbb{P}^{\frac{d}{2}+g-2}\times \mathbb{P}^{\frac{d}{2}+g-2})(u,v)
\end{align}
where the subscripts $+$ and $-$ refer to the corresponding eigenspaces of eigenvalues $+1$ and $-1$ for the action of $\mathbb{Z}/2$ in both $H^{\ast}(\mathbb{P}^{\frac{d}{2}+g-2}\times \mathbb{P}^{\frac{d}{2}+g-2})$ and $H^{\ast}(\Jac^{d/2}X\times
\Jac^{d/2}X\setminus \Delta)$ respectively.

We have that $H^{\ast}(\mathbb{P}^{\frac{d}{2}+g-2}\times \mathbb{P}^{\frac{d}{2}+g-2})\cong H^{\ast}(\mathbb{P}^{\frac{d}{2}+g-2})\otimes H^{\ast}(\mathbb{P}^{\frac{d}{2}+g-2})$ and $H^{\ast}(\Jac^{d/2}X\times \Jac^{d/2}X )\cong H^{\ast}(\Jac^{d/2}X)\otimes H^{\ast}(\Jac^{d/2}X)$. These are isomorphisms of mixed Hodge structures, in fact isomorphisms of pure Hodge structures. Applying Lemma 2.6 of \cite{MOV2} to those polynomials in (\ref{Polinomio suma}), one obtains that the Hodge--Deligne polynomial of $\mathscr{W}_{\mathcal{SE}_{\underline{n}}}$ is given by
\begin{align}\label{polinomio cuarto estrato}\mathcal{H}&(\mathscr{W}_{\mathcal{SE}_{\underline{n}}})(u,v)= \nonumber \\& =
\big{[} \frac{(1-(uv)^{\frac{d}{2}+g-1})^2}{2(1-uv)^2}+\frac{(1-(uv)^{d+2(g-1)})}{2(1-(uv)^2)}\big{]}\cdot
\big{[} \frac{1}{2}(1+u)^{2g}(1+v)^{2g}+\frac{1}{2}(1-u^2)^g(1-v^2)^g-(1+u)^g(1+v)^g\big{]}+  \nonumber \\& +
\big{[} \frac{(1-(uv)^{\frac{d}{2}+g-1})^2}{2(1-uv)^2}-\frac{(1-(uv)^{d+2(g-1)})}{2(1-(uv)^2)}\big{]}\cdot
\big{[} \frac{1}{2}(1+u)^{2g}(1+v)^{2g}-\frac{1}{2}(1-u^2)^g(1-v^2)^g
\big{]} = \\& =
\big{[} \frac{(1-(uv)^{\frac{d}{2}+g-1})^2}{2(1-uv)^2}\big{]}\cdot
\big{[} (1+u)^{2g}(1+v)^{2g}-(1+u)^g(1+v)^g\big{]}+\nonumber \\& +
\big{[} \frac{(1-(uv)^{d+2(g-1)})}{2(1-(uv)^2)}\big{]}\cdot
\big{[} (1-u^2)^g(1-v^2)^g -(1+u)^g(1+v)^g
\big{]}.\nonumber
\end{align}

Finally, we consider the split
extensions in which the bundle in the middle is the direct sum of
two copies of the same line bundle of degree $d/2$\begin{equation*}0\rightarrow L \rightarrow L\oplus L \rightarrow
L\rightarrow 0.\end{equation*}The space ${\mathcal{SE}_{\underline{n}}'}$ that parametrizes the bundles $F=L\oplus L$ is identified to $\Jac^{d/2}X$. By Theorem \ref{bgn} the set of BGN extension classes giving rise to $\alpha$-stable coherent systems can be identified to the subet of $H^1(L^{\vee})\oplus H^1(L^{\vee})\cong H^1(L^{\vee})\otimes \mathbb{C}^2$ of linearly independent $e$, $e'$ in $H^1(L^{\vee})\otimes \mathbb{C}^2$. Now, the set of automorphisms of $F$, $GL(2)$, acts on $H^1(L^{\vee})\otimes \mathbb{C}^2$ via the standard representation of $GL(2)$ on $\mathbb{C}^2$. Then, the stratum $\mathscr{W}_{\mathcal{SE}_{\underline{n}}'}$ can be described as a locally trivial fibration over ${\mathcal{SE}_{\underline{n}}'}$ whose fiber at a point $L\in {\mathcal{SE}_{\underline{n}}'}\cong \Jac^{d/2}X$ is $\Gr (2, h^1(L^{\vee}))\cong \Gr (2,\frac{d}{2}+g-1)$. By Lemma \ref{lema2.3} one has that
 \begin{align}\label{Mecanso}\mathcal{H}&(\mathscr{W}_{\mathcal{SE}_{\underline{n}}'})(u,v)=\mathcal{H}({\mathcal{SE}_{\underline{n}}'})(u,v)\cdot \mathcal{H}(\Gr (2,d/2+g-1))(u,v)=\\& =\mathcal{H}(\Jac^{d/2}X)(u,v)\cdot \mathcal{H}(\Gr (2,d/2+g-1))(u,v)\nonumber =(1+u)^g(1+v)^g\bigg{[}\frac{(1-(uv)^{\frac{d}{2}+g-2})
(1-(uv)^{\frac{d}{2}+g-1})}{(1-uv)(1-(uv)^{2})}\bigg{]}.\nonumber\end{align}


Summing up polynomials (\ref{100}), (\ref{200}), (\ref{2002}), (\ref{polinomio cuarto estrato}), and (\ref{Mecanso}) together we obtain the result.\end{proof}

\begin{remark}\label{Remark_Final}\textnormal{Note that for given $(n,d,k)$ satisfying $(n-k,d)=(2,d)$, $\gcd(2,d)\neq 1$ and $k$ odd, one immediately obtains that $\gcd(n,d,k)=1$. Under this condition, the moduli space of $\alpha$-stable coherent systems, $G_L(n,d,k)$, is projective, smooth and irreducible (see \cite{KN} and Proposition \ref{prop:restate}). Then, from Remark \ref{HPtoHD} one can obtain the usual Poincar\'e polynomial of $G_L(n,d,k)$, $P_{G_L(n,d,k)}(t)$, just by knowing its Hodge--Deligne polynomial, that is $P_{G_L(n,d,k)}(t)=\mathcal{H}({G_L(n,d,k)})(t,t)$. Hence, Theorem \ref{H2} allows us to compute the Poincar\'e polynomial of $G_L(3,d,1)$ when $d$ is even and $g\geq (3-d)/2$. }\end{remark}

\begin{cor}\label{ya}
The Poincar\'e polynomial of $G_L(3,d,1)$ when $d$ is even and $g\geq (3-d)/2$ is given by
\begin{align*}P&_{G_L(3,d,1)}(t)  =\\& =\frac{(1+t)^{2 g}(-t^2 +t^{d+2g})}{{t^6} {{(-1+{t^2})}^3} (1+{t^2})}\Big({-t}^{6+2g}(1+t)^{2g} +
t^4(1+t^3)^{2g}-{t}^{4g+d}(1+t)^{2g}+t^{2+d+2g}(1+t^3)^{2g}\Big).
\end{align*}
\end{cor}

\begin{example}\textnormal{For $(n,d,k)$ satisfying the conditions of Remark \ref{Remark_Final}, one can deduce that the Poincar\'e polynomial of $G_L(n,d,k)$ should be symmetric reflecting Poincar\'e duality.}

\textnormal{For $(n,d,k)=(3,2,1)$ and $g=2$ one can easily check that $G_L(3,2,1)$ satisfies the conditions of Remark \ref{Remark_Final}. From Corollary \ref{ya} and using \textbf{Mathematica} to simplify the calculation, one obtains the following Poincar\'e polynomial:\begin{align*}P_{G_L(3,2,1)}(t)=&\mathcal{H}(G_L(3,2,1))(t,t)=1 + 4 t + 8 t^2 + 16 t^3 + 33 t^4 + 56 t^5 + 84 t^6 + 116 t^7 +\\& +
132 t^8 + 116 t^9 + 84 t^{10} + 56 t^{11} + 33 t^{12} + 16 t^{13} + 8 t^{14} + 4 t^{15} + t^{16}.\end{align*}Note that this polynomial is actually symmetric as expected.}
\end{example}

\begin{remark}\textnormal{The space $G_L(3,d,1)$ is isomorphic to the moduli space of rank 3 pairs, $\mathcal{N}_{\sigma^-_M}(3,1,d,0)$ (see \cite{Mn} for more details). The polynomial here obtained for coherent systems coincides with that of \cite{Mn} for rank 3 pairs. However, there is a typo in the formula of Theorem 6.5 in \cite{Mn}, $n_0$ should be defined as $\Big{\lceil} \frac{\sigma+d_1+d_2}{2} \Big{\rceil}$. The two formulas differ by a factor $(1+u)^g(1+v)^g$ corresponding to the fact that in \cite{Mn}, the determinant of the bundles is not fixed.}\end{remark}

\subsection*{Acknowledgements}I would like to thank Prof. Peter E. Newstead for bringing my attention to coherent systems and for his support, encouragement and advice during the research performed which ended up in this article that has been extremely enriched by his guidance. I also would like to thank Prof. Montserrat Teixidor i Bigas for her help and advice during the completion of my Ph.D. dissertation at Tufts, of which this paper is part. Thank you to the departments of mathematics of the Universities of Liverpool, Oxford and Tufts.

\end{document}